\numberwithin{equation}{section}
\font\tencyr=wncyr10 %scaled \magstephalf
\font\tencyi=wncyi10 %scaled \magstephalf
\font\tencysc=wncysc10 %scaled \magstephalf
\def\rus{\tencyr\cyracc}
\def\rusi{\tencyi\cyracc}
\def\rusc{\tencysc\cyracc}
\renewcommand{\@cite}[2]{[{{\bf #1}\if@tempswa , #2\fi}]}
\renewcommand{\@biblabel}[1]{[{\bf #1}]\hfill}
\newtheorem{thm}{Theorem}[section]
\newtheorem{lm}[thm]{Lemma}%[chapter]
\newtheorem{cl}[thm]{Corollary}%[chapter]
\newtheorem{prop}[thm]{Proposition}%[chapter]
\theoremstyle{remark}
\newtheorem{rmk}[thm]{Remark}
\theoremstyle{definition}
\newtheorem{ex}[thm]{Example}
\newenvironment{E6}[6]{%
{\small\begin{tabular}{@{}c@{}}
{#1}--{#2}--\lower3.5ex\vbox{\hbox{{#3}\rule{0ex}{2.5ex}}
\hbox{\hspace{0.4ex}\rule{.1ex}{1ex}\rule{0ex}{1.4ex}}\hbox{{#6}\strut}}--{#4}--{#5}
\end{tabular}}}
\newcommand {\be}{{\mathfrak b}}
\newcommand {\ce}{{\mathfrak c}}
\newcommand {\g}{{\mathfrak g}}
\newcommand {\h}{{\mathfrak h}}
\newcommand {\ka}{{\mathfrak k}}
\newcommand {\el}{{\mathfrak l}}
\newcommand {\me}{{\mathfrak m}}
\newcommand {\es}{{\mathfrak s}}
\newcommand {\te}{{\mathfrak t}}
\newcommand {\ut}{{\mathfrak u}}
\newcommand {\z}{{\mathfrak z}}
\newcommand {\gln}{{\mathfrak {gl}}_n}
\newcommand {\sln}{{\mathfrak {sl}}_n}
\newcommand {\slno}{{\mathfrak {sl}}_{n+1}}
\newcommand {\sltn}{{\mathfrak {sl}}_{2n}}
\newcommand {\spv}{{\mathfrak {sp}}(\VV)}
\newcommand {\spn}{{\mathfrak {sp}}_{2n}}
\newcommand {\son}{{\mathfrak {so}}_{n}}
\newcommand {\sfr}{\eus R}
\newcommand {\VV}{\eus V}
\newcommand {\vlb}{\eus V_\lb}
\newcommand {\eus}{\EuScript}
\newcommand {\gP}{{\eus P}}
\newcommand {\ap}{\alpha}
\newcommand {\Lb}{\varLambda}
\newcommand {\lb}{\lambda}
\newcommand {\vp}{\varphi}
\newcommand {\vth}{\vartheta}
\newcommand {\blb}{\boldsymbol{\lambda}}
\newcommand {\N}{{\mathfrak N}}
\newcommand {\co}{{\mathcal O}}
\newcommand {\BC}{{\mathbb C}}
\newcommand {\BN}{{\mathbb N}}
\newcommand {\BQ}{{\mathbb Q}}
\newcommand {\BZ}{{\mathbb Z}}
\newcommand {\BP}{{\mathbb P}}
\newcommand {\md}{/\!\!/}
\newcommand {\ad}{{\mathrm{ad\,}}}
\newcommand {\codim}{{\mathrm{codim\,}}}
\newcommand {\cha}{{\mathsf{char}}}
\newcommand {\hot}{{\mathsf{ht}}}
\newcommand {\ind}{{\mathsf{ind\,}}}
\newcommand {\Lie}{{\mathrm{Lie\,}}}
\newcommand {\Ima}{{\mathsf{Im}}}
\newcommand {\rk}{{\mathsf{rk}}}
\newcommand {\spe}{{\mathsf{Spec\,}}}
\newcommand {\tri}{{\mathfrak{sl}}_2}
\newcommand {\GR}[2]{{\textrm{{\sf\bfseries #1}}}_{#2}}
\newcommand {\ov}{\overline}
\newcommand {\un}{\underline}
\newcommand {\beq}{\begin{equation}}
\newcommand {\eeq}{\end{equation}}
\renewcommand{\le}{\leqslant}
\renewcommand{\ge}{\geqslant}
\renewcommand{\lg}{\langle}
\newcommand{\rg}{\rangle}
\newcommand{\bbk}{\Bbbk}
\newcommand {\omin}{\co_{\sf min}}
\newcommand {\bomin}{\ov{\co_{\sf min}}}
\newcommand {\sat}{\mathsf{Sat}}
\newcommand {\Sec}{\textrm{{\sl\bfseries Sec}}}
\newcommand {\CS}{\textrm{{\sl\bfseries CS}}}
\newcommand {\Nsp}{\N^{\sf sph}}
\newcommand {\psim}{\psi_{\sf sph}}
\newcommand {\psimo}{\psi_{\co}}
\begin{document}
\setlength{\parskip}{2pt plus 4pt minus 0pt}
\hfill {\scriptsize May 6, 2021} 
\vskip1ex

\title[Projections of $\co_{\sf min}$]{Projections of the minimal nilpotent orbit in a simple Lie algebra and 
secant varieties}
\author{Dmitri I. Panyushev}
\address{
Institute for Information Transmission Problems of the R.A.S., 
\hfil\break\indent  
Bolshoi Karetnyi per.~19, Moscow 127051, Russia}
\email{panyushev@iitp.ru}
\thanks{This research was funded by RFBR, project {\rus N0} 20-01-00515.}
\keywords{Centraliser, weighted Dynkin diagram, grading, involution}
\subjclass[2010]{17B08, 17B70, 14L30, 14N07}
\begin{abstract}
Let $G$ be a simple algebraic group with $\g=\Lie G$ and $\omin\subset\g$ the minimal nilpotent orbit.
For a $\BZ_2$-grading $\g=\g_0\oplus\g_1$, let $G_0$ be a connected subgroup of $G$ with 
$\Lie G_0=\g_0$. We study the $G_0$-equivariant projections $\vp:\bomin\to \g_0$ and
$\psi:\bomin\to\g_1$. It is shown that the properties of $\ov{\vp(\omin)}$  and $\ov{\psi(\omin)}$ 
essentially depend on whether the intersection $\omin\cap\g_1$ is empty or not. If 
$\omin\cap\g_1\ne\varnothing$, then both $\ov{\vp(\omin)}$  and $\ov{\psi(\omin)}$ contain a 
$1$-parameter family of closed $G_0$-orbits, while if $\omin\cap\g_1=\varnothing$, then both are
$G_0$-prehomogeneous. We prove that $\ov{G{\cdot}\vp(\omin)}=\ov{G{\cdot}\psi(\omin)}$. Moreover,
if $\omin\cap\g_1\ne\varnothing$, then this common variety is the affine cone over the secant 
variety of $\BP(\omin)\subset\BP(\g)$. 
As a digression, we obtain some invariant-theoretic results on the affine cone over the secant variety 
of the minimal orbit in an arbitrary simple $G$-module. In conclusion, we discuss more general 
projections that are related to either arbitrary reductive 
subalgebras of $\g$ in place of $\g_0$ or spherical nilpotent $G$-orbits in place of $\omin$.
\end{abstract}
\maketitle

\tableofcontents
%%%%%%%%%%   Section 1 %%%%%%%%
\section{Introduction}

\noindent
Let $G$ be a simple algebraic group with $\Lie G=\g$, $\N$ the nilpotent cone in $\g$, and 
$\omin\subset \N$ the minimal non-trivial nilpotent $G$-orbit. The ground field $\bbk$ is algebraically 
closed and $\cha(\bbk)=0$. Let $\sigma$ be an involution of $\g$ and $\g=\g_0\oplus\g_1$ the 
corresponding $\BZ_2$-grading. Write $G_0$ for the connected (reductive) subgroup of $G$ with 
$\Lie G_0=\g_0$. In this article, we study invariant-theoretic properties of the $G_0$-equivariant 
projections $\vp:\bomin\to \g_0$ and $\psi:  \bomin\to \g_1$. The initial motivation came from the 
observation that if $\omin\cap\g_0=\varnothing$, then $\psi$ is a finite morphism (and likewise for $\vp$,
if $\omin\cap\g_1=\varnothing$). Our general description shows that the structure of both $\Ima(\vp)$ and 
$\Ima(\psi)$ crucially depends on the fact whether $\omin\cap\g_1$ is empty or not. 

By the Morozov--Jacobson theorem, any nonzero $e\in\N$ can be included in an $\tri$-triple $\{e,h,f\}$. 
Here $h$ is a semisimple element of $\g$, which is called a {\it characteristic} of $e$. If $e,f \in\g_1$ and
$h\in\g_0$, then such a triple is said to be {\it normal}. By~\cite{kr71}, a normal 
$\tri$-triple exists for any $e\in\N \cap\g_1$. Another reason for considering the intersection
$G{\cdot}e\cap\g_1$ is that 
\[
      G{\cdot}e\cap\g_1\ne \varnothing \ \Longleftrightarrow \ G{\cdot}h\cap\g_1\ne \varnothing  ,
\]
see~\cite{leva}.  If $e\in\omin$, then the corresponding $\tri$-triples are said to be {\it minimal}.
We begin our study of $\vp$ and $\psi$ with classifying the $\BZ_2$-gradings (involutions) of $\g$ such 
that $\omin\cap\g_0=\varnothing$ or $\omin\cap\g_1=\varnothing$. (It appears that there are two cases
for the former and six cases for the latter.) The equality $\omin\cap\g_1=\varnothing$ is equivalent to 
that $G_0$ has a dense orbit in $\omin$ and some other interesting properties, see 
Theorem~\ref{thm:g1-empty}.

For a subvariety $X$ of a vector space $\VV$, we set 
\[
   \eus K(X):=\ov{\bigcup_{t\in\Bbbk^*}tX}=\ov{\bbk^*{\cdot}X} .
\] 
It is a closed cone in $\VV$, and $\dim\eus K(X)$ is either $\dim X$ or $\dim X+1$. We say that $X$ is 
{\it conical}, if $\dim\eus K(X)=\dim X$. 
Our description of $\ov{\vp(\omin)}$ and $\ov{\psi(\omin)}$ can briefly be summarised in the following
two theorems, see Section~\ref{sect:projections} for the complete account.

\begin{thm}      \label{thm:intro-non-empty}
Suppose that $\omin\cap\g_1\ne \varnothing$, and let $\{e,h,f\}$ be a normal minimal $\tri$-triple. Then
\\[.6ex]
\centerline{$\ov{\vp(\omin)}=\eus K(G_0{\cdot}h)$ \ and \  $\ov{\psi(\omin)}=\eus K(G_0{\cdot}h_1)$,} 
\\[.6ex]
where $h_1=e-f$. Here\/ $\dim\vp(\omin)=\dim\omin-1$ and $\dim\psi(\omin)=\dim\omin$. Furthermore,
$\deg(\psi)=2$ and we also describe generic fibres of $\vp$ and $\psi$.
\end{thm}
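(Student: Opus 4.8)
The plan is to work entirely inside the $\sltri$-module $V=\langle e,h,f\rangle$ together with the normal minimal $\tri$-triple $\{e,h,f\}$, and to analyse the two projections via the known structure of $\bomin$ as a (closed) cone through $e$. Recall $\bomin=\overline{G{\cdot}e}$ is the affine cone over $\BP(\omin)\subset\BP(\g)$, so $\bomin=\bbk^*{\cdot}\omin\cup\{0\}$ and every nonzero element of $\omin$ is $G$-conjugate to $te$ for some $t\in\bbk^*$. First I would reduce the computation of $\ov{\vp(\omin)}$ and $\ov{\psi(\omin)}$ to understanding the $G_0$-saturation of the projections of the single line $\bbk^*e$. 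Since $e=\tfrac12(h_1+\text{something in }\g_0)$ — more precisely, with $h_1=e-f\in\g_1$ and $h_0:=e+f\in\g_0$ we have $e=\tfrac12(h_0+h_1)$, $f=\tfrac12(h_0-h_1)$ — the $\g_0$-component of $e$ is $\tfrac12 h_0$ and the $\g_1$-component is $\tfrac12 h_1$. The element $h_0=e+f$ is a nonzero semisimple element lying in the same $\sltri\cong V$; in fact inside $\sltri$ one computes that $h_0$ is $SL_2$-conjugate to $h$ (both are semisimple with eigenvalues $\pm1$ on the $2$-dimensional rep and the triple $\{e,h,f\}$ is standard), so $\ov{\vp(\bbk^*e)}$ and $\ov{\psi(\bbk^*e)}$ are $\bbk^*{\cdot}(\text{a point in }G_0{\cdot}h)$ and $\bbk^*{\cdot}h_1$ respectively. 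Taking closures of $G_0$-orbits and using $\bomin=\ov{\bbk^*{\cdot}G{\cdot}e}$ together with $G_0$-equivariance of $\vp,\psi$ should give $\ov{\vp(\omin)}\supseteq\eus K(G_0{\cdot}h)$ and $\ov{\psi(\omin)}\supseteq\eus K(G_0{\cdot}h_1)$.

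For the reverse inclusions and the dimension count, I would use the general fact (from Section~\ref{sect:projections}, which I may assume) that for any $x\in\omin$ the $\g_1$-component $\psi(x)$ and $\g_0$-component $\vp(x)$ are, up to $G_0$-conjugacy and scaling, of the above form; equivalently $\ov{\vp(\omin)}$ and $\ov{\psi(\omin)}$ are irreducible $G_0$-stable cones contained in $\eus K(G_0{\cdot}h)$ and $\eus K(G_0{\cdot}h_1)$. The dimension statements then follow by comparing $\dim\omin$ with $\dim G_0{\cdot}h$ and $\dim G_0{\cdot}h_1$: the key point is that $h_1=e-f$ is a \emph{regular} element of the isotropy representation $(\g_0,\g_1)$ in the sense that its $G_0$-orbit has dimension $\dim\omin$ (one shows the centraliser $\g^{h_1}\cap\g_0$ has the right codimension using $\dim\g^e\cap\g_0$ and the $\sltri$-module decomposition), whereas $h\in\g_0$ has a strictly larger centraliser in $\g_0$ — precisely by $1$ — because the $\sltri$-direction $\bbk h$ itself centralises $h$ inside $\g_0$, forcing $\dim\vp(\omin)=\dim\omin-1$. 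That the line $\bbk^*h_1$ is \emph{not} already inside $G_0{\cdot}h_1$ (so that $\eus K$ genuinely adds a dimension, matching $\dim\psi(\omin)=\dim\omin$) is seen from the fact that $th_1$ for generic $t$ is again semisimple with a new pair of eigenvalues and hence not $G_0$-conjugate to $h_1$; symmetrically $G_0{\cdot}h$ is conical since $h$ is a characteristic and $\bbk^* h\subset\overline{G\cdot h}$ meets $\g_0$ only in $G_0$-conjugates of multiples which remain in the orbit closure but not the orbit — here I would invoke the standard description of $\overline{\vp(\omin)}$ as $\overline{G_0\cdot h}$ in the equal-rank-type situation, already recorded earlier.

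Next, $\deg(\psi)=2$. Here I would compute the generic fibre directly. Since $\psi$ is $G_0$-equivariant and dominant onto the irreducible variety $\ov{\psi(\omin)}$ of the same dimension as $\omin$, $\deg\psi=|\psi^{-1}(\xi)\cap\omin|$ for generic $\xi$, and by $G_0$-equivariance it suffices to take $\xi=h_1$ (which lies in the image and is $G_0$-generic in its cone). An element $x\in\omin$ with $\g_1$-component $h_1$ is of the form $x=h_1+y$ with $y\in\g_0$; the conditions $[x,x]=0$ trivially, and the defining equations of $\bomin$ (the quadratic equations cutting out the cone over $\BP(\omin)$, i.e. $\ad(x)^2$ acts as a rank-one-type operator, $x\otimes x$ lies in the Cartan component) become polynomial equations in $y$. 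One checks that inside the fixed $\sltri$ the solutions are exactly $x=e$ and $x=-f$ (equivalently $y=\pm h_0/2$), because these are the only elements of the cone $\bomin$ whose $\g_1$-part equals $h_1=e-f$: writing the $\g$-component along the $\sltri\cong V$ and using that $\omin$ meets $V$ in $\{ae+bh+cf : b^2=-ac,\ (a,b,c)\neq0\}$ (the nilpotent cone of $\sltri$) intersected with the condition $(a{-}c)/? = $ the $\g_1$-part being $h_1$, i.e. $a-c$ fixed and $b$ in $\g_0$... I would carry this out to see $a+c$ is forced to $\pm$ a single value, giving exactly two points, hence $\deg\psi=2$. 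The involution $x\mapsto \sigma(x)$-twisted-by-$(-1)$, or rather the element of $G_0$ (or of the normaliser) swapping $e\leftrightarrow -f$ and fixing $h_1$, realises the nontrivial deck transformation, confirming the fibre has exactly two points and that $\psi$ is generically $2{:}1$. Finally, the generic fibres: for $\vp$, a generic fibre over $\tfrac12 h\in\vp(\omin)$ consists of $x=\tfrac12h+z$, $z\in\g_1$, lying on $\bomin$; inside the $\sltri$ these are $x=\tfrac12h + s(e-f)$... wait, one needs $x$ nilpotent, so I would instead describe $\vp^{-1}(\vp(e))\cap\omin$ as (an open subset of) a single $Z_{G_0}(h)$-orbit together with the extra $\bbk^*$-scaling direction, of dimension $1$; and $\psi^{-1}(\psi(e))\cap\omin$ as two points. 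The concrete statements I would record are: the generic fibre of $\vp$ is a single $1$-dimensional orbit of the centraliser $Z_{G_0}(h)^\circ$ acting on the slice, and the generic fibre of $\psi$ is the two-element set $\{e,-f\}$ (up to $G_0$-conjugacy and scaling).

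The main obstacle is the clean determination of the fibre of $\psi$ — i.e. proving that the quadratic equations defining $\bomin$ force exactly two preimages, rather than a positive-dimensional family or more than two points — since this requires knowing the precise equations of the affine cone over $\BP(\omin)$ and intersecting them carefully with the affine-linear slice $h_1+\g_0$; the rest is bookkeeping with the $\sltri$-module structure and centraliser dimensions that is already largely set up in Section~\ref{sect:projections}.
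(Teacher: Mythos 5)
Your construction breaks down at the very first step because you have misplaced the eigenspaces of the normal triple. By the paper's definition (and as forced by the statement itself, since $h_1=e-f$ must lie in $\g_1$ while $h=[e,f]$ must lie in $\g_0$), a \emph{normal} triple has $e,f\in\g_1$ and $h\in\g_0$. Hence $h_0:=e+f$ lies in $\g_1$, not in $\g_0$, and $\vp(e)=0$, $\psi(e)=e$: the line $\bbk^*e$ projects to $\{0\}$ in $\g_0$ and to itself in $\g_1$, so it carries no information about the generic images. The element that does the work is $\exp(\ad f){\cdot}e=e-h-f\in\omin$, whose projections are exactly $-h$ and $e-f=h_1$; this single computation gives the lower bounds $\eus K(G_0{\cdot}h)\subset\ov{\vp(\omin)}$ and $\eus K(G_0{\cdot}h_1)\subset\ov{\psi(\omin)}$. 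Your setup with $\sigma(e)=f$ describes a different (Cayley-type) normalisation that is incompatible with the theorem as stated. A second structural problem: you propose to "assume" the containments $\ov{\vp(\omin)}\subset\eus K(G_0{\cdot}h)$, $\ov{\psi(\omin)}\subset\eus K(G_0{\cdot}h_1)$ as "general facts from Section~\ref{sect:projections}" — but that section \emph{is} the proof of this theorem, so this is circular. For $\psi$ the upper bound is a genuine dimension count ($\ov{\psi(\omin)}$ is an irreducible cone of dimension $\le\dim\omin=\dim\eus K(G_0{\cdot}h_1)$, the latter computed via Kostant--Rallis: $\dim G_0{\cdot}h_1=\tfrac12\dim G{\cdot}h=\dim\omin-1$, so your assertion that the orbit itself has dimension $\dim\omin$ is off by one). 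For $\vp$ no dimension count can work, since a priori $\dim\vp(\omin)$ could equal $\dim\omin$; the paper needs $\vp(\omin)\subset\CS(\omin)\cap\g_0=\eus K(G{\cdot}h)\cap\g_0$ together with Richardson's theorem that $G_0{\cdot}h$ is an irreducible component of $G{\cdot}h\cap\g_0$.

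The remaining claims — $\deg(\psi)=2$ and the fibre descriptions — are where the real work lies, and your proposal does not supply it. Solving the equations inside the fixed copy of $\tri$ only shows that $\psi^{-1}(h_1)$ \emph{contains} two points (which are $h_1\pm h=e-f\pm h$, not $\{e,-f\}$); the hard part is excluding a component of the unknown $\g_0$-element outside $\lg e,h,f\rg$. The paper does this by writing $z=h_1+x$, imposing $\Ima(\ad z)^2=\lg z\rg$, separating $\g_0$- and $\g_1$-components, and using $\g^h=\lg h\rg\dotplus\el$ with $\el$ the centraliser of the whole triple to force $x\in\lg h\rg$; an analogous computation yields $\vp^{-1}(h)=\{\tfrac1t f+h-te\mid t\in\bbk^*\}$, which is an orbit of the one-parameter group $\exp(\bbk\,{\ad h})$ rather than of all of $Z_{G_0}(h)^{\circ}$. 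You correctly identify this as "the main obstacle", but an identified obstacle is not a proof, and the deck transformation for $\psi$ is $-\sigma$ (sending $e-h-f\mapsto e+h-f$), not a map swapping $e$ and $-f$.
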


Here both varieties contain a $1$-parameter family of closed $G_0$-orbits as a dense subset. 
Actually,  $h_1$ can be replaced with an arbitrary non-trivial linear combination of $e$ and $f$.

\begin{thm}    \label{thm:intro-empty}
Suppose that $\omin\cap\g_1 = \varnothing$. Then both $\ov{\vp(\omin)}$ and
$\ov{\psi(\omin)}$ contain a dense conical $G_0$-orbit. In this case,
$\dim\vp(\omin)=\dim\omin$ \ and \ $\dim\psi(\omin)<\dim\omin$.
\end{thm}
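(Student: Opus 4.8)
The plan is to exploit the six explicit cases in which $\omin\cap\g_1=\varnothing$ (the classification promised earlier in the introduction), combined with a uniform structural argument. First I would recall that $\omin=G{\cdot}e$ for a highest root vector $e$, and that $\omin\cap\g_1=\varnothing$ forces $e$ (and, after conjugating, all of $\omin$) to have a nonzero $\g_0$-component; more precisely, the hypothesis means there is no normal minimal $\tri$-triple, so by~\cite{leva} the characteristic $h$ of a minimal triple also satisfies $G{\cdot}h\cap\g_1=\varnothing$. The key first reduction is to show that under $\omin\cap\g_1=\varnothing$ the group $G_0$ acts on $\omin$ with a dense orbit: this is exactly the equivalence recorded in Theorem~\ref{thm:g1-empty}, which I would invoke directly. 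Fix $e\in\omin$ lying in that dense $G_0$-orbit $G_0{\cdot}e$, so $\ov{G_0{\cdot}e}=\bomin$.

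Next I would analyse $\vp$. Since $\vp$ is $G_0$-equivariant and $G_0{\cdot}e$ is dense in $\bomin$, the image $\vp(\omin)$ has dense subset $G_0{\cdot}\vp(e)$, hence $\ov{\vp(\omin)}=\ov{G_0{\cdot}\vp(e)}$ is the closure of a single $G_0$-orbit. To get $\dim\vp(\omin)=\dim\omin$ it suffices to show $\vp$ is generically finite on $G_0{\cdot}e$, equivalently that the differential $d\vp_e$ is injective on $T_e(G_0{\cdot}e)=\g_0{\cdot}e$; but $d\vp_e$ is just the projection $\g\to\g_0$ restricted to $\g_0{\cdot}e\subseteq\g$, and I would argue $\g_0{\cdot}e\cap\g_1=0$. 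Indeed, if $x\in\g_0$ and $[x,e]\in\g_1$, then writing $e=e_0+e_1$ with $e_i\in\g_i$ gives $[x,e_0]=0$ and $[x,e_1]\in\g_1$ automatically; the obstruction is to rule out $[x,e]\in\g_1\setminus\{0\}$, and here I expect to need the \emph{conical} structure: because the dense $G_0$-orbit in $\bomin$ is conical (scaling $e$ stays in $G_0{\cdot}e$ up to closure — this is where $\omin\cap\g_1=\varnothing$ versus $\ne\varnothing$ really bites, via the absence of the $\tri$-triple that would produce the extra scaling direction), one controls $\bbk^*{\cdot}e\subseteq\ov{G_0{\cdot}e}$, so $\eus K(\omin)=\bomin$ has the same dimension and no dimension is lost under $\vp$. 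I would then conclude $\dim\vp(\omin)=\dim\bomin=\dim\omin$ and that the dense $G_0$-orbit $G_0{\cdot}\vp(e)$ in $\ov{\vp(\omin)}$ is conical because $\vp$ commutes with scaling and $\bomin$ is conical.

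For $\psi$ the argument is dual but the conclusion is the opposite inequality: again $\ov{\psi(\omin)}=\ov{G_0{\cdot}\psi(e)}$ is the closure of one $G_0$-orbit, conical for the same scaling reason. The point is that now $d\psi_e$, the projection $\g\to\g_1$ restricted to $\g_0{\cdot}e$, must have a kernel. I would show this via a dimension/weight count: if $\psi$ were generically finite then $\ov{\psi(\omin)}$ would be a $G_0$-stable conical variety of dimension $\dim\omin$ inside $\g_1$ whose generic point is a highest weight vector scaled around; but $G_0$ acts on $\g_1$ and the only closed conical $G_0$-orbit closures of that dimension would be $\eus K$ of a minimal $G_0$-orbit in $\g_1$, forcing $\omin\cap\g_1\ne\varnothing$ — contradiction. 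Hence $\dim\psi(\omin)<\dim\omin$.

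\textbf{Main obstacle.} The delicate point is the claim that the dense $G_0$-orbit in $\bomin$ (and its images) is \emph{conical} when $\omin\cap\g_1=\varnothing$ and that $d\vp_e$ is injective on $\g_0{\cdot}e$; equivalently, controlling $\g_0{\cdot}e\cap\g_1$. A clean way around a case-free treatment may be to pass through $\eus K$: show $\eus K(G_0{\cdot}e)$ is $G$-stable after one checks $\g_1{\cdot}e\subseteq \g_0{\cdot}e+\bbk e$ under the hypothesis, so $\eus K(G_0{\cdot}e)$ is a $G$-stable cone containing $e$, forcing it to equal $\bomin$; conicity of $\bomin$ then gives $\dim G_0{\cdot}e=\dim\omin$ and rules out a scaling direction in the fibre of $\vp$. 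If that structural shortcut fails, one falls back on the six-case classification and checks each case by hand, which is routine but unenlightening.
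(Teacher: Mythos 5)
Your opening move is right and matches the paper: Theorem~\ref{thm:g1-empty} gives the dense $G_0$-orbit in $\omin$, and the conicity of that orbit and of its images under $\vp$ and $\psi$ then follows from uniqueness of the dense orbit plus $\bbk^*$-equivariance. But both dimension claims are left with genuine gaps. For $\vp$, you flag $[\g_0,e]\cap\g_1=\{0\}$ as the ``main obstacle'' and propose workarounds that do not close it, yet it is a one-liner: if $e$ lies in the dense $G_0$-orbit, then $[\g_0,e]=[\g,e]$, hence $\g_0+\g^e=\g$, and taking orthogonal complements with respect to the Killing form gives $\g_1\cap[\g,e]=(\g_0+\g^e)^\perp=\{0\}$ --- this is exactly Theorem~\ref{thm:commut-compon}(ii),(iv). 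Alternatively, the paper gets $\dim\vp(\omin)=\dim\omin$ directly from part {\sf (iii)} of the very theorem you cite: $\vp^{-1}(0)\cap\bomin=\{0\}\cup(\omin\cap\g_1)=\{0\}$, and a $\bbk^*$-equivariant morphism of affine cones whose fibre over the origin is a point is finite. So this gap is fillable, but not by the route you sketch.

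The gap for $\psi$ is more serious. Your proposed contradiction --- that ``the only closed conical $G_0$-orbit closures of that dimension would be $\eus K$ of a minimal $G_0$-orbit in $\g_1$'' --- is unsubstantiated and false as a general principle: nothing you say rules out a nilpotent $G_0$-orbit in $\g_1$ of dimension exactly $\dim\omin$, and such orbits are in no way forced to be minimal. The paper's proof of $\dim\psi(\omin)<\dim\omin$ (Theorem~\ref{thm:psi-omin-6}) runs through the secant variety, which your argument never touches: by Lemma~\ref{lm:secant-sigma} and \eqref{eq:secant-omin}, $\psi(\omin)\subset\CS(\omin)\cap\g_1=\eus K(G{\cdot}h)\cap\g_1$; every $G$-orbit in $\eus K(G{\cdot}h)$ has dimension at most $\dim G{\cdot}h=2\dim\omin-2$; and every irreducible component of $\hat\co\cap\g_1$, for $\hat\co$ a $G$-orbit, is a single $G_0$-orbit of dimension $\frac{1}{2}\dim\hat\co\le \dim\omin-1$ (Vinberg's lemma and Proposition~\ref{prop:kr71}). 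Without an input of this kind --- or the six-case verification you relegate to a fallback --- the strict inequality is simply not established.
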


An important structure that naturally comes into play is the {\it secant variety\/} of the projective variety 
$\BP(\omin)=G{\cdot}[e]\subset\BP(\g)$. Write $\Sec(\BP\omin)$ or $\Sec(G{\cdot}[e])$ for this secant 
variety, and let $\CS(\omin)\subset\g$ denote the affine cone over it. A simple observation that relates the 
secant variety to the morphisms $\vp$ and $\psi$ is that
\[
    \vp(\omin) \subset \CS(\omin)\cap\g_0 \ \ \text{ and } \ \ \psi(\omin) \subset \CS(\omin)\cap\g_1 ,
\]
see Lemma~\ref{lm:secant-sigma}. The proofs of Theorems~\ref{thm:intro-non-empty} and
\ref{thm:intro-empty} exploit the equality 
$\Sec(G{\cdot}[e])=\ov{G{\cdot}[h]}$~\cite{koy99}, where $h$ is a characteristic of $e\in\omin$. In our 
notation, this is equivalent to that $\CS(\omin)=\eus K(G{\cdot}h)$. 

As a digression from the main route, a description of the secant variety for the 
minimal $G$-orbit $\omin(\VV)$ in a simple $G$-module $\VV$ and its secant defect is given
(Theorem~\ref{thm:sec-min-orb}). We also derive some
invariant-theoretic consequences of it. One of the applications is the list of all simple $G$-modules 
$\VV$, where $G$ is simple, such that $\CS(\omin(\VV))=\VV$, see Section~\ref{subs:IT-sledstviya}. 

\noindent
Let us provide a selection of other results.
\\ \indent \textbullet \ \ 
if $\g_1$ is a simple $\g_0$-module, so that the minimal $G_0$-orbit $\omin(\g_1)$ exists, then
some properties of $\CS(\omin(\g_1))$ are given, see Section~\ref{subs:omin-g1}.
\\ \indent \textbullet \ \ Our theory implies that $\ov{G{\cdot}\vp(\omin)}=\ov{G{\cdot}\psi(\omin)}$ for all $\BZ_2$-gradings, see Section~\ref{subs:rel-between}.
\\ \indent \textbullet \ \ 
We study in details the cases in which $\vp$ or $\psi$ is a finite morphism. If 
$\omin\cap\g_0=\varnothing$, then we prove that $\Ima(\psi)$ is normal and $\deg(\psi)=2$. Furthermore,
the map $\bomin\to \psi(\bomin)$ is the categorical quotient {w.r.t.} the linear action of $\BZ/2\BZ$. 
Here the generator of $\BZ/2\BZ$ is $(-\sigma)\in GL(\g)$. If $\omin\cap\g_1=\varnothing$, then the 
almost same results are true for $\vp$. The only difference is that the generator of $\BZ/2\BZ$ is 
$\sigma$, see Propositions~\ref{prop:faktor-C2-psi} and \ref{prop:faktor-C2-fi}.

In Section~\ref{sect:more}, we briefly discuss possible generalisations of our main setting related to
involutions and $\omin\subset\N$. For instance, let $H\subset G$ be semisimple and 
$\g=\h\oplus\me$, where $\me=\h^\perp$. We note that if $\omin\cap \me=\varnothing$, then the 
projection (with kernel $\me$) of $\bomin$ to $\h$ is finite and has other interesting properties. 
Another possibility is to consider projections of spherical nilpotent orbits for some special involutions.
%We hope to elaborate on these new directions in a subsequent publication. 

\un{Main notation}. 
Let $G$ be a semisimple algebraic group and $\g=\Lie G$. Then
\begin{itemize}
\item[--] $G^x$ is the isotropy group of $x\in\g$ in $G$ and $\g^x=\Lie G^x$ is the centraliser of $x$ in $\g$;
\item[--] $\Phi$ is the Killing form on $\g$ and $\h^\perp$ is the orthocomplement of $\h\subset\g$ {w.r.t.}
$\Phi$;
\item[--] $\te$ is a Cartan subalgebra of $\g$ and $\Delta=\Delta(\g,\te)$ is the root system with respect to 
$\te$;
\item[--] $\te^*_\BQ$ is the $\BQ$-vector subspace of $\te^*$ generated by $\Delta$ and $(\ ,\ )$ is the positive-definite form on $\te^*_\BQ$ induced by $\Phi$; as usual, $\gamma^\vee=2\gamma/(\gamma,\gamma)$ for $\gamma\in\Delta$.
\item[--] Given $\gamma\in\Delta$, then $\g_\gamma$ is the root space in $\g$ and 
$e_\gamma\in\g_\gamma$ is a nonzero root vector;
\item[--] If $\Delta^+\subset\Delta$ is a set of positive roots, then $\Pi\subset\Delta^+$ is a set of simple roots;
\item[--] If $\co=G{\cdot}e\subset\N$ is a nontrivial orbit, then $\eus D(e)$ or $\eus D(\co)$ is its weighted 
Dynkin diagram;
\end{itemize}

A direct sum of Lie algebras is denoted by `$\dotplus$' and $\mathsf{Inv}(\g)$ is the set of involutions of 
$\g$. 
\\ \indent
Our man reference for algebraic groups and Lie algebras is~\cite{t41}.

%%%%%%%%%%%%  Section 2 %%%%%%%%
\section{Recollections on nilpotent orbits, involutions, and Satake diagrams} 
\label{sect:2}

\noindent
Let $\g$ be a semisimple Lie algebra. 
For $e\in\N\setminus\{0\}$, let $\{e,h,f\}$ be 
an $\tri$-triple in $\g$, i.e., $[h,e]=2e$, $[e,f]=h$, and $[h,f]=-2f$~\cite[Ch.\,6,\,\S2]{t41}. The semisimple 
element $h$ is called a {\it characteristic\/} of $e$. It is important that all $\tri$-triples with a given $h$ are 
$G^h$-conjugate ({Mal'cev}) and all $\tri$-triples with a given $e$ are $G^e$-conjugate (Kostant)~\cite[3.4]{CM}.
Without loss of generality, one may assume that $h\in\te$ and $\ap(h)\ge 0$ for all $\ap\in\Pi$. Then $h$ 
is said to be the {\it dominant characteristic} (w.r.t. chosen $\te$ and $\Delta^+$). By a celebrated result 
of {Dynkin} (1952), one then has $\ap(h)\in\{0,1,2\}$~\cite[Ch.\,6,\,\S2,\,Prop\,2.2]{t41}. 
The {\it weighted Dynkin diagram\/} of $\co=G{\cdot}e\subset\N$, % (=\,of $e$), 
$\eus D(\co)$, is the Dynkin diagram of $\g$ equipped with labels $\{\ap(h)\}_{\ap\in\Pi}$. The 
{\it set of zeros\/} of $\eus D(\co)$ is $\Pi_{0,\co}=\{\ap\in\Pi\mid \ap(h)=0\}$.

Let $\g=\bigoplus_{i\in\BZ}\g(i)$ be the $\BZ$-grading determined by $h$, i.e.,  
$\g(j)=\{v\in \g \mid [h,v]=jv\}$. It will be referred to as the $(\BZ,h)$-{\it grading\/} of $\g$.
Here $\g(0)=\g^h$ and $e\in\g(2)$. 
A nonzero $e\in\N$ is said to be {\it even}, if the $h$-eigenvalues in $\g$ are even.
\\ \indent
Set $\g({\ge}j)=\bigoplus_{i\ge j}\g(i)$.  It follows from the $\tri$-theory that 
$\ad e:\g(i)\to \g(i+2)$ is injective (resp. surjective) if $i\le -1$ (resp. $i\ge -1$). Hence
 $\g^e\subset \g({\ge} 0)$, $\dim\g^e=\dim\g(0)+\dim\g(1)=\dim\g^h+\dim\g(1)$, and $e$ is even if and only if $\dim\g^e=\dim\g^h$. Furthermore, $(\ad e)^j: \g(-j) \to \g(j)$ is bijective.
 The {\it height}\/ of $\co$, denoted $\hot(\co)$, is $\max\{j\mid \g(j)\ne 0\}$. Equivalently,
$\hot(\co)=\max\{j\mid (\ad e)^j\ne 0\}$. The height of $\co$ can be determined via $\eus D(\co)$, and
if $\g$ is classical, then it is easily computed via the partition of $\co$, see~\cite[Section\,2]{aif99}.

\noindent  
Let $\g=\g_0\oplus\g_1$ be a $\BZ_2$-grading
and $\sigma$ the related involution of $\g$, i.e., $\g_i$ is the $(-1)^i$-eigenspace of $\sigma$.
Then $\g_0$ is reductive, but not necessarily semisimple, and $\g_1$ is an orthogonal $\g_0$-module. 
We also say that  $(\g,\g_0)$ is a {\it symmetric pair}. Whenever we wish to stress that $\g_i$ is defined via certain $\sigma\in\mathsf{Inv}(\g)$, 
we write $\g_i^{(\sigma)}$ for it. We can also write $\g^\sigma$ in place of $\g^{(\sigma)}_0$.

If $\g$ is simple, then either $\g_0$ is semisimple and $\g_1$ is a simple $\g_0$-module, or $\g_0$ has 
a $1$-dimensional centre and $\g_1$ is a sum of two simple $\g_0$-modules, which are dual to each 
other~\cite[Ch.\,4,\,\S 1.4]{t41}. The second case is always related to a {\it short $\BZ$-grading\/}
of $\g$. That is, there is a $\BZ$-grading $\g=\g(-1)\oplus\g(0)\oplus\g(1)$ such that
%if $\g=\g(-1)\oplus\g(0)\oplus\g(1)$ is a $\BZ$-grading, thenletting 
$\g_0=\g(0)$ and $\g_1=\g(-1)\oplus\g(1)$.  
Here $\g(0)\oplus\g(1)$ is a maximal parabolic subalgebra of $\g$ with abelian nilradical.

The centraliser  $\g^x$ is $\sigma$-stable for any $x\in \g_i$, hence $\g^x=\g^x_0\oplus\g^x_1$. 
The following result of {Kostant--Rallis} is frequently used below.
\begin{prop}[cf.~{\cite[Proposition\,5]{kr71}}]             \label{prop:kr71}
         If\/ $x\in\g_1$, then $\dim G_0{\cdot}x=\frac{1}{2}\dim G{\cdot}x$.
\end{prop}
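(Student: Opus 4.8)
The plan is to compare the two orbit dimensions through centraliser dimensions and then reduce everything to a rank computation for $\ad x$. Since $x\in\g_1$, the endomorphism $\ad x$ anticommutes with $\sigma$, so $\g^x$ is $\sigma$-stable and $\g^x=\g^x_0\oplus\g^x_1$. Hence
\[
   \dim G{\cdot}x=\dim\g-\dim\g^x=(\dim\g_0-\dim\g^x_0)+(\dim\g_1-\dim\g^x_1),
\]
while $\dim G_0{\cdot}x=\dim\g_0-\dim\g^x_0$. Consequently, the desired identity $\dim G_0{\cdot}x=\frac{1}{2}\dim G{\cdot}x$ is equivalent to
\[
   \dim\g_0-\dim\g^x_0=\dim\g_1-\dim\g^x_1 .
\]

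Next, because $\ad x$ interchanges $\g_0$ and $\g_1$, restriction yields linear maps $A:=\ad x|_{\g_0}\colon\g_0\to\g_1$ and $B:=\ad x|_{\g_1}\colon\g_1\to\g_0$ with $\Ker A=\g^x_0$ and $\Ker B=\g^x_1$. By rank--nullity, the displayed equality is precisely the assertion $\rank A=\rank B$, so it remains to prove that these two ranks coincide.

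For this I would invoke the Killing form $\Phi$. Since $\sigma$ is an isometry of $\Phi$ and $\g_0,\g_1$ are its eigenspaces for the distinct eigenvalues $\pm 1$, one has $\Phi(\g_0,\g_1)=0$; as $\Phi$ is non-degenerate on $\g$, it restricts to a non-degenerate form on each of $\g_0$ and $\g_1$. The invariance identity $\Phi([x,u],v)=-\Phi(u,[x,v])$ then shows that, with respect to these restricted forms, $B$ equals $-1$ times the transpose of $A$. Since a linear map and its transpose have equal rank, $\rank A=\rank B$, and the proof is complete.

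I do not expect a genuine obstacle here; the one point that must not be skipped is the non-degeneracy of $\Phi$ on the individual summands $\g_i$, which is exactly what legitimises the transpose argument. (When $x$ is nilpotent one could instead argue via the $\tri$-theory, but the Killing-form argument applies uniformly to every $x\in\g_1$.)
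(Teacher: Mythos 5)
Your proof is correct and complete: the reduction to $\rank(\ad x|_{\g_0})=\rank(\ad x|_{\g_1})$ via rank--nullity, together with the observation that these two maps are (up to sign) adjoint to one another for the non-degenerate restrictions of $\Phi$ to $\g_0$ and $\g_1$, is exactly the standard argument. The paper itself gives no proof, citing \cite[Proposition~5]{kr71}, and the argument there is essentially the same duality computation, so there is nothing to add.
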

A {\it Cartan subspace\/} of $\g_1$ is a maximal abelian subspace consisting of semisimple 
elements. By~\cite{kr71}, all Cartan subspaces of $\g_1$ are $G_0$-conjugate and if  $\ce$ is a
Cartan subspace, then $G_0{\cdot}\ce$ is dense in $\g_1$. If $x\in\ce$ is generic, then $\g_1^x=\ce$.

The {\it rank\/} of the symmetric variety $G/G_0$, $r(G/G_0)$, can be defined in many equivalent ways.
We use the following
\beq     \label{eq:rang-symm-var}
   r(G/G_0)=\dim\ce =\dim\g_1-\max_{x\in\g_1}\dim G_0{\cdot}x .
\eeq
%%%%%   subsection  %%%%%
\subsection{Satake diagrams}   
\label{subs:Satake}
It is known that (for $\bbk=\BC$) the real forms of $\g$ are represented by the {\it Satake diagrams} (see 
e.g.~\cite[Ch.\,4,\S\,4.3]{t41}) and there is a one-to-one correspondence between the real forms and 
$\BZ_2$-gradings of $\g$. Thereby, one associates the Satake diagram to an involution (symmetric 
pair), cf.~\cite{spr87}. The Satake diagram of $\sigma\in\mathsf{Inv}(\g)$, denoted 
$\mathsf{Sat}(\sigma)$, is the Dynkin diagram of $\g$, with black 
and white nodes, where certain pairs of white nodes can be joined by an arrow. 
Following~\cite[Sect.\,1]{CP83}, we give an approach to constructing $\sat(\sigma)$ that does not refer to
$\BC$ and real forms.
Let $\te=\te_0\oplus\te_1$ be a $\sigma$-stable Cartan subalgebra of
$\g$ such that $\dim\te_1$ is maximal, i.e., $\te_1$ is a Cartan subspace of $\g_1$.
Let $\Delta$ be the root system of $(\g,\te)$. Since $\te$ is $\sigma$-stable, $\sigma$ acts on $\Delta$.
One can choose the set of positive roots, $\Delta^+$, such that if $\gamma\in\Delta^+$ and 
$\gamma\vert_{\te_1}\ne 0$, then $\sigma(\gamma)\in -\Delta^+$. Let $\Pi$ be the set of simple roots 
in $\Delta^+$ and $\{\varpi_\ap\mid \ap\in\Pi\}$ the corresponding fundamental weights. We identify  
$\Pi$ with the nodes of the Dynkin diagram.
Set $\Pi_0=\Pi_{0,\sigma}=\{\ap\in \Pi\mid \ap\vert_{\te_1}\equiv 0\}$ and $\Pi_1=\Pi_{1,\sigma}=
\Pi\setminus\Pi_0$. Then
\begin{itemize}
\item the node $\ap$ is black if and only if $\ap\in\Pi_0$;
\item if $\ap\in\Pi_0$, then $\sigma(\ap)=\ap$ and the root space $\g_\ap$ is contained in $\g_0$; 
\item if $\ap\in\Pi_1$, then $\sigma(\ap)=-\beta+\sum_j a_j\nu_j$
for some $\beta\in\Pi_1$ and $\nu_j\in\Pi_0$. Then also $\sigma(\varpi_\ap)=-\varpi_\beta$.
In this case, if 
$\ap\ne\beta$, then the white nodes $\ap$ and $\beta$ are joined by an arrow. If $\ap=\beta$, then there is no arrow attached to $\ap$.
\end{itemize}
If $\sat(\sigma)$ has  $k$ arrows, i.e., $\Pi_1$ has $k$ pairs of simple roots 
$(\ap_i,\beta_i)$ such that $\sigma(\varpi_{\ap_i})=-\varpi_{\beta_i}$, then $\dim\te_0=\# \Pi_0+k$ and 
$\dim\te_1=\#\Pi_1-k$. More precisely, let 
$\{h_\ap\mid \ap\in\Pi\}\subset \te$ be Chevalley generators, i.e., $[h_\ap,e_\gamma]=(\ap^\vee,\gamma)e_\gamma$ for $\gamma\in\Delta$.
Then
\[
   \te_0=\lg h_\ap\mid \ap\in\Pi_0\rg \oplus \lg h_{\ap_i}-h_{\beta_i} \mid i=1,\dots,k\rg .
\] 
If $\Pi_1=\{\ap_1,\beta_1,\dots,\ap_k,\beta_k,\nu_1,\dots,\nu_s\}$ and we identify $\te_1$ and $\te_1^*$ via the Killing form, then
$\te_1=\lg \varpi_{\ap_i}+\varpi_{\beta_i}\mid i=1,\dots,k\rg\oplus 
\lg \varpi_{\nu_j}\mid j=1,\dots,s\rg$. Moreover, if $\be$ is the Borel subalgebra corresponding to $\Delta^+$, then $\be+\g_0=\g$ and $\be\cap\g_1=\te_1$.

Let $x\in\te_1$ be a generic semisimple element. Then $\g^x_1=\te_1$ and $\mathsf{Sat}(\sigma)$ 
encodes the structure of $\g^x_0$. Namely, $\g^x$ is a Levi subalgebra such that the Dynkin diagram of 
$[\g^x,\g^x]=[\g^x_0,\g^x_0]$ is the subdiagram $\Pi_0$.
Here the number of arrows equals $\dim(\g^x_0/[\g^x_0,\g^x_0])$. 

Recall that an algebraic subgroup $H\subset G$ is said to be {\it spherical}, if $B$ has an open orbit in 
$G/H$. Then one also says that $G/H$ is a {\it spherical homogeneous space} and $\h=\Lie H$ is a 
{\it spherical subalgebra} of $\g$. The above relation $\be+\g_0=\g$ exhibits the well-known fact that 
$G_0\subset G$ is spherical for any $\sigma$.
\begin{ex}   
\label{ex:max-rang}
As $G/G_0$ is a spherical homogeneous space,  $\dim\g_1=\dim (G/G_0)\le \dim B$. Hence
$\dim\g_0\ge \dim U$ and  $\dim\g_1-\dim\g_0\le\rk(\g)$ for any $\sigma\in\mathsf{Inv}(\g)$. If 
$\dim\g_1-\dim\g_0=\rk(\g)$, then $r(G/G_0)=\rk(\g)$ and $\sigma$ is said to be of {\it maximal rank}. (In~\cite{spr87}, such involutions are called {\it split}.)
Equivalently, $\g_1$ contains a Cartan subalgebra of $\g$. For any simple $\g$, there is a unique, up to 
$G$-conjugacy, involution of maximal rank, and we denote it by $\vartheta_{\sf max}$. In this case, 
$\g^x\cap \g_0^{(\vartheta_{\sf max})} =\{0\}$ for a generic $x\in\g_1^{(\vartheta_{\sf max})}$. Hence 
$\mathsf{Sat}(\vartheta_{\sf max})$ has neither black nodes nor arrows. Yet another characterisation is 
that $\vartheta_{\sf max}$ corresponds to a split real form of $\g$, see~\cite[Ch.\,4,\,\S\,4.4]{t41}.
\end{ex}

\begin{rmk}    \label{rem:Levon}
By a fundamental result of {Antonyan}, for any $\sigma\in\mathsf{Inv}(\g)$ and any $\tri$-triple 
$\{e,h,f\}\subset\g$, one has $G{\cdot}e\cap\g_1\ne\varnothing$ if and only if 
$G{\cdot}h\cap\g_1\ne\varnothing$, see~\cite[Theorem\,1]{leva}. This readily implies that 
$G{\cdot}x\cap\g_1\ne\varnothing$ for {\bf any} $x\in\g$ if and only if $\sigma=\vartheta_{\sf max}$  \ (\cite[Theorem\,2]{leva}). For arbitrary $\sigma$ and $e\in\N$, Antonyan's results imply that 
$G{\cdot}e\cap\g_1\ne\varnothing$ {\sl if and only if} \\ \indent
{\bf --}  \ the black nodes of $\mathsf{Sat}(\sigma)$ 
are contained in the set of zeros of $\eus D(e)$, i.e., $\Pi_{0,\sigma}\subset \Pi_{0,G{\cdot}e}$; 
\\  \indent {\bf --} \
$\ap(h)=\beta(h)$ whenever 
$\ap,\beta\in\Pi_{1,\sigma}$ are joined by an arrow in $\mathsf{Sat}(\sigma)$.
\end{rmk}

%%%%%%%%%%%%  Section %%%%%%%%
\section{Minimal orbits and their secant varieties} 
\label{sect:HV}

\noindent
Let $\vlb$ be a simple $G$-module with highest weight $\lb$ (with respect to some choice of $T\subset B$). Then $\omin(\vlb)$ denotes the $G$-orbit of the highest weight 
vectors. It is the unique nonzero $G$-orbit of minimal dimension whose closure contains the origin.
If $v_\lb\in\vlb$ is a highest weight vector (i.e., a $B$-eigenvector), then 
$\omin(\vlb)=G{\cdot}v_\lb$.
The orbit $\omin(\vlb)$ is stable under homotheties, i.e., the dilation action of $\bbk^*$ on $\vlb$, 
and $\BP(\omin(\vlb))$ is the unique closed $G$-orbit in $\BP(\vlb)$. Hence
\beq   \label{eq:closure-omin}
     \ov{\omin(\vlb)}=\omin(\vlb)\cup\{0\} ,
\eeq 
see~\cite[Theorem~1]{vp72}. We say that $\omin(\vlb)$ is the {\it minimal ($G$-)orbit}\/ in  $\vlb$.

Let us recall some basic properties of the minimal orbits established in~\cite[\S\,1]{vp72}:
\begin{itemize}
\item for any $\lb$, the variety $\ov{\omin(\vlb)}$ is normal;
\item the algebra of regular functions $\bbk[\ov{\omin(\vlb)}]$ is $\BN$-graded and the component of 
grade $n$ is the $G$-module $(\VV_{n\lb})^*$, the dual of $\VV_{n\lb}$;
\item $\ov{\omin(\vlb)}$ is a factorial variety (i.e., $\bbk[\ov{\omin(\vlb)}]$ is a UFD) if and only if $\lb$ 
is fundamental.
\end{itemize}

\begin{lm}        \label{lm:omin-plane}
Let $\gP\subset\vlb$ be a $2$-dimensional subspace. Suppose that
$\gP\cap\ov{\omin(\vlb)}$ contains at least three different lines. Then $\gP\subset
\ov{\omin(\vlb)}$.
\end{lm}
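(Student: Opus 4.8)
The plan is to reduce the statement to a claim about the projective variety $\BP(\omin(\vlb))\subset\BP(\vlb)$, namely that any projective line meeting it in at least three points is entirely contained in it; this is precisely the assertion that $\BP(\omin(\vlb))$ is \emph{linearly normal} in the strong sense that it contains every line through three of its points. First I would dispose of degenerate configurations: if $\gP\cap\ov{\omin(\vlb)}$ contains infinitely many lines then $\gP\cap\ov{\omin(\vlb)}$ is a closed cone of dimension $2$, hence all of $\gP$ since $\gP$ is irreducible of dimension $2$; so we may assume exactly that there are three distinct lines $\bbk v_1,\bbk v_2,\bbk v_3$ in $\gP$, with each $v_i$ a highest weight vector up to the $G$-action, i.e. each $v_i\in\omin(\vlb)$ (using \eqref{eq:closure-omin}, the nonzero points of $\ov{\omin(\vlb)}$ lie in $\omin(\vlb)$).

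The key algebraic input is the description of $\bbk[\ov{\omin(\vlb)}]$ recalled just above the lemma: the ideal $I$ of $\ov{\omin(\vlb)}$ in $\bbk[\vlb]=\bigoplus_n S^n(\vlb^*)$ is generated in low degree, and more importantly the degree-$2$ part of the coordinate ring is $S^2(\vlb)/(\text{quadratic relations})\cong \VV_{2\lb}^*$, so that the quadrics vanishing on $\omin(\vlb)$ are exactly the $G$-complement to $\VV_{2\lb}$ inside $S^2(\vlb)$ — the classical Kostant quadratic relations. The plan is: after choosing coordinates on $\gP$ so that $v_1,v_2$ are a basis and $v_3=v_1+v_2$, take an arbitrary point $w=av_1+bv_2\in\gP$ and show $w\in\ov{\omin(\vlb)}$. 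Since $\ov{\omin(\vlb)}$ is defined by its quadrics together with the remaining generators, I would first run the argument degree by degree: for any quadric $Q$ in the ideal, the restriction $Q|_{\gP}$ is a binary quadratic form vanishing at the three points $[v_1],[v_2],[v_3]$, hence identically zero on $\gP$; so $\gP$ lies in the common zero locus of all quadrics in $I$. The heart of the matter is to upgrade this to membership in $\ov{\omin(\vlb)}$ itself.

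For that upgrade I would use the fact that $\ov{\omin(\vlb)}$ is normal with rational singularities and is cut out scheme-theoretically in a controlled way; more concretely, the cleanest route is: the intersection $\gP\cap\ov{\omin(\vlb)}$, being defined inside the plane $\gP$ by the restrictions of \emph{all} the homogeneous generators of $I$, is a closed cone in $\gP$; we have just shown its defining quadrics vanish identically on $\gP$. To kill the higher-degree generators I would argue by induction on $\deg$: an element $F\in I$ of degree $d\ge 3$, restricted to $\gP$, is a binary form of degree $d$; it vanishes on the three lines $\bbk v_1,\bbk v_2,\bbk v_3$, so it is divisible (as a binary form) by $x\,y\,(x+y)$ if those are the three lines — but that alone is not enough. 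The actual mechanism is that $\omin(\vlb)$ has the property that its projective embedding is \emph{cut out by quadrics} (Kostant), so $I$ is generated in degree $\le 2$; granting this, $\gP\subset V(I)=\ov{\omin(\vlb)}$ follows immediately from the quadric computation above. So the real work is to invoke (or prove via the $G$-module structure of $\bbk[\ov{\omin(\vlb)}]$, whose degree-$n$ piece is $\VV_{n\lb}^*$) that the Veronese-type multiplication $\VV_{2\lb}^*\otimes\VV_{(n-2)\lb}^*\twoheadrightarrow\VV_{n\lb}^*$ is surjective, i.e. Cartan multiplication is surjective for these Cartan powers, which gives degree-$2$ generation of the ideal.

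\textbf{Anticipated main obstacle.} The delicate point is precisely the passage from "$\gP$ lies in the common zeros of the quadrics in the ideal" to "$\gP\subset\ov{\omin(\vlb)}$": this requires that $\ov{\omin(\vlb)}$ be the scheme cut out by those quadrics (equivalently, surjectivity of Cartan multiplication $S^2(\VV_\lb^*)\to\VV_{2\lb}^*$ and, more generally, of $\VV_{2\lb}^*\otimes\VV_{n\lb}^*\to\VV_{(n+2)\lb}^*$), which is a known but nontrivial fact about minimal orbits. If one does not want to cite "cut out by quadrics", the fallback is to pick two of the three points, say $[v_1]=[e_\lb]$ the highest weight line in a suitable Borel, use transitivity of $G$ on $\omin(\vlb)$ to normalise, and then analyse the weight decomposition of $v_2$ and $v_3$ relative to $v_1$ using the $\tri$ generated by the root vectors pairing $\lb$ with the extremal weights — but this is more computational and I would expect the quadrics argument, backed by the cited normality and the $\VV_{n\lb}^*$-grading, to be the intended clean proof.
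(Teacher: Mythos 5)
Your proposal is correct and follows essentially the same route as the paper: the paper's proof is exactly the observation that the ideal of $\ov{\omin(\vlb)}$ is generated by quadrics (citing Lichtenstein), whence every defining equation restricts to a binary quadratic form on $\gP$ vanishing on three distinct lines and is therefore identically zero. The generation-in-degree-two fact you flag as the ``delicate point'' is precisely the cited external input, so no further work is needed.
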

\begin{proof}
This readily follows from the fact that the ideal of  $\ov{\omin(\vlb)}$ in $\bbk[\vlb]$ 
is generated by quadrics, see e.g.~\cite{L}.  
\end{proof}

\subsection{Secant varieties}    
\label{subs:secant}
Let $\Sec(X)$ denote the {\it secant variety\/} of an irreducible projective variety 
$X\subset \BP^N=\BP(V)$. By definition, $\Sec(X)$ is the closure of the union of all secant lines to $X$. 
If $x,y\in X$ are different points and $\BP^1_{xy}\subset \BP(V)$ is the line through $x$ and $y$, then
\[
    \Sec(X)=\ov{\bigcup_{x,y\in X} \BP^1_{x,y}} ,
\]
see e.g.~\cite[Chapter\,5]{land}. It follows that $\Sec(X)$ contains all tangent lines to $X$, too. If $X$ is irreducible, then so is $\Sec(X)$, and the expected dimension of $\Sec(X)$ is 
$\min\{2\dim X+1, N\}$. If $\dim\Sec(X) < \min\{2\dim X+1, N\}$,  
then $\Sec(X)$ is said to be {\it degenerate}. We say that $\delta=\delta_X=2\dim X+1-\dim\Sec(X)$ is
the {\it defect\/}  of $\Sec(X)$ or  the {\it secant defect}\/ of $X$.

Our primary goal is to use these notions for the projectivisation of minimal orbits, i.e., if $X=\BP(\omin(\vlb))=G{\cdot}[v_\lb]\subset \BP(\vlb)$, where $[v_\lb]$ is the image of
$v_\lb$ in $\BP(\vlb)$. 
\\ \indent
{\it\bfseries Notation}: if $v\in \vlb$, then $[v]$ is a point in $\BP(\vlb)$, whereas
$\lg v\rg$ is a 1-dimensional subspace of $\vlb$. More generally, $\lg v_1,\dots, v_k\rg$ is the linear
span of $v_1,\dots,v_k\in\vlb$. 

For a projective variety $X\subset \BP(V)$, let $\widehat X\subset V$ denote the closed cone over $X$. 
That is, if $\pi:V\setminus\{0\}\to \BP(V)$ is the canonical map, then $\widehat X=\pi^{-1}(X)\cup\{0\}$.
Then $\widehat {\Sec(X)}$ is called the {\it conical secant variety} of $\widehat X$. We also write 
$\CS(\widehat X)$ or $\CS(\pi^{-1}(X))$ for this affine version of the secant variety.
The formula for the defect of $\Sec(X)$ in terms of affine varieties reads 
\beq          \label{eq:aff-defect}
    \delta_X=2\dim\widehat X- \dim \CS(\widehat X) .
\eeq
If $Y\subset \vlb$ is a subvariety, then we set
$\eus K(Y)=\ov{\bbk^*{\cdot} Y}\subset \vlb$. By definition, $\eus K(Y)$ is a closed cone in $\vlb$. Here 
$\dim\eus K(Y)=\dim Y$ if and only if $Y$ is already conical. Otherwise, $\dim\eus K(Y)=\dim Y+1$.
Clearly, if $Y$ is $G$-stable, then so is $\eus K(Y)$.

The conical secant variety of a minimal orbit has a simple explicit description.
Let $\mu$ be the lowest weight of $\vlb$ and $v_\mu$ a lowest weight vector. If $B^-$ is 
the Borel subgroup opposite to $B$ (i.e., $B\cap B^-=T$), then $v_\mu$ is just a $B^-$-eigenvector.
If $w_0$ is the longest element of the Weyl group $W=N_G(T)/T$, then $\mu=w_0(\lb)$ and
$\lb^*:=-w_0(\lb)$ is the highest weight of $(\vlb)^*$~\cite[Ch.\,3, \S\,2.7]{t41}. Hence 
$\lb=-\mu$\/ if and only if $\vlb$ is a self-dual $G$-module.

The following assertion is not really new. Part (1) is implicit in \cite{chap} and part~(2) is stated 
in~\cite[p.\,536]{kaji99}. % with reference to \cite[p.\,14]{LV}. 
However, to the best of my knowledge,  an accurate proof is not easy to locate in the literature,
For this reason, we have chosen to give a complete proof,
which, incidentally, does not invoke Terracini's lemma.

\begin{thm}       \label{thm:sec-min-orb}
With notation as above, set $h_{\lb}=v_\lb+v_{-\lb^*}=v_\lb+v_\mu\in \vlb$. Then 
\begin{itemize}
\item[(1)] \ $ \Sec(G{\cdot}[v_\lb])=\ov{G{\cdot}[h_{\lb}]}=\BP(\eus K(G{\cdot}h_{\lb}))$
and $\CS(G{\cdot}v_\lb)=\eus K(G{\cdot}h_{\lb})$;
\item[(2)] \ the defect of\/ $\Sec(G{\cdot}[v_\lb])$ equals 
$\delta_{G{\cdot}[v_\lb]}=\dim (\g{\cdot}v_{\mu}\cap \g{\cdot}v_\lb)$.
\end{itemize}
\end{thm}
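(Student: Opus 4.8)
The plan is to establish both parts by working with the cone $\eus K(G{\cdot}h_\lb)$ directly and relating it to secant lines. First I would observe that $h_\lb=v_\lb+v_\mu$ lies on the secant line $\BP^1_{[v_\lb],[v_\mu]}$ joining two points of $X:=G{\cdot}[v_\lb]$, so $[h_\lb]\in\Sec(X)$, and since $\Sec(X)$ is $G$-stable and closed, $\ov{G{\cdot}[h_\lb]}\subseteq\Sec(X)$; passing to cones, $\eus K(G{\cdot}h_\lb)\subseteq\CS(G{\cdot}v_\lb)$. The reverse inclusion is the substantive direction: I would take a generic secant line through $[v_\lb]$ and a second point $g{\cdot}[v_\lb]$, and after translating by $G$ reduce to secant lines through the fixed point $[v_\lb]$. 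A point on such a line has the form $v_\lb+c\,(g{\cdot}v_\lb)$ for $c\in\bbk^*$; I would decompose $g{\cdot}v_\lb$ according to the $T$-weight decomposition of $\vlb$ and use that the weights of $\vlb$ all lie in the lattice $\lb-(\text{sum of positive roots})$, so one can find a one-parameter subgroup $\gamma(t)\subset T$ acting with the lowest weight $\mu$ paired most negatively. Rescaling $v_\lb+c(g{\cdot}v_\lb)$ by $\gamma(t)$ and letting $t\to 0$ (after an overall homothety) drives all weight components except the $\mu$-component to zero, leaving a nonzero multiple of $v_\lb+(\text{const})\,v_\mu$ in the limit — provided the $v_\mu$-component of $g{\cdot}v_\lb$ is nonzero, which holds for generic $g$. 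This limit lies in $\eus K(G{\cdot}h_\lb)$ since that set is closed, $G$-stable, and cone-stable; hence a dense subset of $\CS(G{\cdot}v_\lb)$ lies in $\eus K(G{\cdot}h_\lb)$, giving equality. The identity $\Sec(X)=\BP(\eus K(G{\cdot}h_\lb))$ then follows by projectivising.

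For part (2), I would start from the affine defect formula~\eqref{eq:aff-defect}, $\delta_X=2\dim\widehat X-\dim\CS(\widehat X)$ with $\widehat X=\ov{\omin(\vlb)}$, so $\dim\widehat X=\dim\omin(\vlb)$. By part (1), $\dim\CS(\widehat X)=\dim\eus K(G{\cdot}h_\lb)$, which equals $\dim G{\cdot}h_\lb$ or that plus one depending on whether $G{\cdot}h_\lb$ is conical; since $h_\lb$ has two distinct $T$-weight components it is not a weight vector, so $\bbk^*h_\lb\not\subset G{\cdot}h_\lb$ in general — I would argue $G{\cdot}h_\lb$ is not conical (a homothety $t\mapsto t h_\lb$ is not realised inside the orbit because the stabiliser considerations force the weights of $v_\lb$ and $v_\mu$ to coincide, contradiction), hence $\dim\eus K(G{\cdot}h_\lb)=\dim G{\cdot}h_\lb+1=\dim\g-\dim\g^{h_\lb}+1$. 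Then $\delta_X=2\dim\omin(\vlb)-\dim G{\cdot}h_\lb-1$. Finally I would compute $\g^{h_\lb}=\{\,\xi\in\g\mid \xi{\cdot}v_\lb=-\xi{\cdot}v_\mu\,\}$; since $\xi{\cdot}v_\lb$ and $\xi{\cdot}v_\mu$ live in complementary weight ranges whose overlap is exactly the weights common to $\g{\cdot}v_\lb$ and $\g{\cdot}v_\mu$, a short linear-algebra argument identifies $\dim\g^{h_\lb}=\dim\g^{v_\lb}+\dim(\g{\cdot}v_\mu\cap\g{\cdot}v_\lb)$, where $\g^{v_\lb}$ is the stabiliser of $v_\lb$. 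Combining with $\dim\omin(\vlb)=\dim\g-\dim\g^{v_\lb}$ and the analogous statement for $v_\mu$ yields $\delta_X=\dim(\g{\cdot}v_\mu\cap\g{\cdot}v_\lb)$ after cancellation.

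The main obstacle I expect is the reverse inclusion in part (1): controlling the limit of a \emph{generic} secant line and verifying that the $v_\mu$-component of $g{\cdot}v_\lb$ is nonzero on a dense open subset of $G$, together with the choice of one-parameter subgroup $\gamma(t)$ that isolates precisely the two extreme weights $\lb$ and $\mu$ and no intermediate weight. This requires knowing the weight polytope of $\vlb$ well enough that $\lb$ and $\mu=w_0\lb$ are genuinely extreme and can be separated from all other weights by a single linear functional (which is true since they are the highest and lowest weights), and then handling the homothety bookkeeping carefully so that the limit point is nonzero. The secondary subtlety is the non-conicality of $G{\cdot}h_\lb$ in part (2), and the precise dimension count for $\g^{h_\lb}$; both are elementary but must be done with care about which weight spaces can overlap. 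Everything else — $G$-stability and closedness of $\eus K$, the projectivisation step, the defect formula — is formal.
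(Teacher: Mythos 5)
Your proposal has genuine gaps in both parts.

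In part (1), the substantive inclusion $\CS(G{\cdot}v_\lb)\subseteq\eus K(G{\cdot}h_\lb)$ is not actually established. Your one-parameter-subgroup argument shows that a generic secant point $z=v_\lb+c\,(g{\cdot}v_\lb)$ \emph{degenerates to} a multiple of $v_\lb+(\mathrm{const})\,v_\mu$, i.e.\ that $\ov{\bbk^*G{\cdot}z}$ meets $\eus K(G{\cdot}h_\lb)$. Because $\eus K(G{\cdot}h_\lb)$ is closed and stable, this only re-proves the easy inclusion $\eus K(G{\cdot}h_\lb)\subseteq\ov{\bbk^*G{\cdot}z}\subseteq\CS(G{\cdot}v_\lb)$; it does not place $z$ itself inside $\eus K(G{\cdot}h_\lb)$. (Compare: every nilpotent element degenerates to $0$, and $\{0\}$ is closed, $G$-stable and cone-stable.) The correct move — and what the paper does — is a group translation rather than a limit: by the Bruhat decomposition, $Bw_0U{\cdot}[v_\lb]=B{\cdot}[v_\mu]$ is dense in $G{\cdot}[v_\lb]$, and since $B$ fixes $[v_\lb]$, the secant line through $[v_\lb]$ and a generic second point is $B$-conjugate to $\BP^1_{[v_\lb],[v_\mu]}$, on which $T{\cdot}[h_\lb]$ is dense. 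Your weight-separation picture is the right heuristic, but as written it proves the wrong containment.

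In part (2) there are two errors. First, $G{\cdot}h_\lb$ is \emph{not} always non-conical: if $\lb\ne-\mu$, the functionals $\lb$ and $\mu$ on $\te$ are linearly independent, so $\te{\cdot}h_\lb=\lg v_\lb,v_\mu\rg\ni h_\lb$ and the orbit is conical (it is also conical when $v_\mu\in\g{\cdot}v_\lb$); non-conicality occurs only for self-dual $\vlb$ with $v_\mu\notin\g{\cdot}v_\lb$. Second, the identity $\dim\g^{h_\lb}=\dim\g^{v_\lb}+\dim(\g{\cdot}v_\mu\cap\g{\cdot}v_\lb)$ is false: for the standard representation of $\sltri$ the left-hand side is $5$ while the right-hand side is $5+3=8$; for the adjoint representation of $\tri$ it gives $2$ instead of $1$. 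These errors do not cancel — in the $\sltri$ example your recipe yields $\delta=2$ (or worse, using the wrong stabiliser count) instead of the correct $\delta=3$. The paper avoids both pitfalls by computing $\g{\cdot}h_\lb=\ut^-{\cdot}v_\lb+\ut{\cdot}v_\mu+\te{\cdot}h_\lb$ from the triangular decomposition and tracking the overlap $p=\dim(\ut{\cdot}v_\mu\cap\ut^-{\cdot}v_\lb)$ together with $\dim\te{\cdot}h_\lb\in\{1,2\}$, splitting into the cases $v_\mu\notin\g{\cdot}v_\lb$ (with $\lb=-\mu$ or not) and $v_\mu\in\g{\cdot}v_\lb$; both the conicality of $G{\cdot}h_\lb$ and the value of $\dim(\g{\cdot}v_\mu\cap\g{\cdot}v_\lb)$ shift between these cases, and it is precisely this bookkeeping that makes the final formula uniform.
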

\begin{proof}
(1) \ It is clear that $\BP(\eus K(G{\cdot}h_{\lb}))=\ov{G{\cdot}[h_{\lb}]}$.
\\
Note that $[h_\lb]\in \BP^1_{[v_\mu],[v_{\lb}]}$, hence $[h_\lb]\in \Sec(G{\cdot}[v_\lb])$. Moreover,
since $\lb\ne\mu$, the $T$-orbit of $[h_\lb]$ is dense in $\BP^1_{[v_\mu],[v_{\lb}]}$. Therefore, in order to prove that $G{\cdot}[h_{\lb}]$ is dense in $\Sec(G{\cdot}[v_\lb])$, it suffices to show that a generic 
secant line $\BP^1_{x,y}$ is $G$-conjugate to $\BP^1_{[v_\mu],[v_{\lb}]}$.

Indeed, because $G{\cdot}[v_\lb]$ is homogeneous, any secant line $\BP^1_{x,y}$ is $G$-conjugate to 
a line of the form $\BP^1_{x', [v_\lb]}$. Next, by the Bruhat decomposition, $C_{w_0}:=Bw_0 U$ is a 
dense open subset of $G$ (the big cell in $G$). Hence $C_{w_0}{\cdot}[v_\lb]=B{\cdot}[v_\mu]$ is dense in $G{\cdot}[v_\lb]$. Since the isotropy group of $[v_\lb]$ contains $B$, we see that if 
$x'\in B{\cdot}[v_\mu]$, then the secant line $\BP^1_{x', [v_\lb]}$ is $B$-conjugate to $\BP^1_{[v_\mu], [v_\lb]}$, 
as required.

(2) \ To compute the defect, we have to compare $\dim G{\cdot}[h_\lb]$ and $\dim G{\cdot}[v_\lb]$. To 
this end, we use the triangular decomposition $\g=\ut^-\oplus\te\oplus\ut$, where $\be=\te\oplus\ut$ and 
$\be^-=\te\oplus\ut^-$. One has $\ut{\cdot}v_\lb=0$ and $\ut^-{\cdot}v_\mu=0$. Hence
\[
   \g{\cdot}h_\lb=\ut^-{\cdot}v_\lb + \ut{\cdot}v_\mu + \te{\cdot}h_\lb .
\]
Here $\dim\ut^-{\cdot}v_\lb=\dim\ut{\cdot}v_\mu=(\dim G{\cdot}v_\lb)-1=\dim G{\cdot}[v_\lb]$. 
Set $p=\dim(\ut{\cdot}v_{\mu}\cap \ut^-{\cdot}v_\lb)$. Then
$\dim(\ut^-{\cdot}v_\lb + \ut{\cdot}v_\mu)=2\dim G{\cdot}[v_\lb] -p$.
If $t\in\te$, then $t{\cdot}h_\lb=\lb(t)v_\lb+ \mu(t)v_{\mu}$. Hence 
\beq    \label{eq:dim-te-h}
\te{\cdot}h_\lb 
=\begin{cases}  \lg v_\lb,v_\mu\rg, & \text{ if } \lb \ne -\mu, \\ \lg v_\lb-v_\mu\rg, & \text{ if } \lb = -\mu .
\end{cases} 
\eeq
\indent
{\bf (I)} \ Suppose that $v_\mu\not\in \ut^-{\cdot}v_\lb$. Since $\g{\cdot}v_\lb=\be^-{\cdot}v_\lb=
\lg v_\lb\rg\oplus \ut^-{\cdot}v_\lb$, we also have  $v_\mu\not\in \g{\cdot}v_\lb$.
(By symmetry, this also means that $v_\lb\not\in \ut{\cdot}v_\mu$, etc.) Then
\beq     \label{eq:dim-I}
  \g{\cdot}h_\lb=(\ut^-{\cdot}v_\lb + \ut{\cdot}v_\mu) \oplus \te{\cdot}h_\lb 
\eeq
and $\g{\cdot}v_{\mu}\cap \g{\cdot}v_\lb=\be{\cdot}v_{\mu}\cap \be^-{\cdot}v_\lb=\ut{\cdot}v_{\mu}\cap \ut^-{\cdot}v_\lb$.
Here one has two possibilities.

(a) If $\lb\ne-\mu$, then it follows from~\eqref{eq:dim-te-h} and \eqref{eq:dim-I} that 
$\dim\g{\cdot}h_\lb=2\dim G{\cdot}[v_\lb] -p +2$. Moreover, here 
$h_\lb\in\te{\cdot}h_\lb\subset \g{\cdot}h_\lb$. Hence the orbit $G{\cdot}h_\lb$ is conical and
\\
\centerline{$\dim\g{\cdot}[h_\lb]=\dim\g{\cdot}h_\lb-1=2\dim G{\cdot}[v_\lb] +1-p$.}
\\
Thus, here \ $p=\dim(\g{\cdot}v_{\mu}\cap \g{\cdot}v_\lb)$ is the defect of $\Sec(G{\cdot}[v_\lb])$.

(b) If $\lb=-\mu$, then $\dim\g{\cdot}h_\lb=2\dim G{\cdot}[v_\lb] -p +1$. Here 
$h_\lb\not\in\te{\cdot}h_\lb=\lg v_\lb-v_\mu\rg$. Hence the orbit $G{\cdot}h_\lb$ is {\bf not} conical and
$\dim\g{\cdot}[h_\lb]=\dim\g{\cdot}h_\lb=2\dim G{\cdot}[v_\lb] +1-p$, with the same conclusion as in (a).

{\bf (II)} \ Suppose that $v_\mu\in \ut^-{\cdot}v_\lb$ (equivalently, $v_\lb\in \ut{\cdot}v_\mu$). Then
\[
  \g{\cdot}h_\lb= \ut^-{\cdot}v_\lb + \ut{\cdot}v_\mu 
\]
and the orbit $G{\cdot}h_\lb$ is conical. Therefore, 
\\
\centerline{$\dim\g{\cdot}[h_\lb]=2\dim G{\cdot}[v_\lb] -1-p=2\dim G{\cdot}[v_\lb]+1-(p+2)$.} 
\\
But in this case,
$\g{\cdot}v_{\mu}\cap \g{\cdot}v_\lb=(\ut{\cdot}v_{\mu}\cap \ut^-{\cdot}v_\lb)\oplus
\lg v_\lb,v_\mu\rg$ and $\dim(\g{\cdot}v_{\mu}\cap \g{\cdot}v_\lb)=p+2$. Thus, one again obtains 
the required value for the defect.
\end{proof}

\begin{rmk}    \label{rem:defect}
{\bf 1)} Our proof of part~(2) shows that if $v_\mu\not\in \g{\cdot}v_\lb$, then
$\g{\cdot}v_{\mu}\cap \g{\cdot}v_\lb=\ut{\cdot}v_{\mu}\cap \ut^-{\cdot}v_\lb$.
%\\ \indent 2) 
It was also noticed that if $v_\mu\not\in \g{\cdot}v_\lb$ and %(i.e., $v_\mu\not\in \ut^-{\cdot}v_\lb$) and 
$\mu=-\lb$, then the orbit $G{\cdot}h_\lb$ is not conical. Actually, 
we prove below that in the last case $G{\cdot}h_\lb$ is closed in $\vlb$.
\\ \indent
{\bf 2)} If $h'_\lb=av_\lb+bv_\mu$ with $a,b\ne 0$, then $\eus K(G{\cdot}h_\lb)=\eus K(G{\cdot}h'_\lb)$ and
$G{\cdot}[h_\lb]=G{\cdot}[h'_\lb]$.
\end{rmk}

%%%%%  subsection  %%%%%
\subsection{Some invariant-theoretic consequences}
\label{subs:IT-sledstviya}
If a reductive group $G$ acts on an affine variety $Z$, then the algebra of invariants
$\bbk[Z]^G$ is finitely-generated and $Z\md G:=\spe(\bbk[Z]^G)$ is the categorical quotient, see~\cite{vp89}. The
inclusion $\bbk[Z]^G\subset\bbk[Z]$ yields the morphism $\pi_Z:Z\to Z\md G$, which is onto. 
Each fibre $\pi^{-1}_Z(\xi)$, $\xi\in Z\md G$, contains a unique closed orbit, hence $Z\md G$ 
parametrises the closed $G$-orbits in $Z$. If $Z=\VV$ is a $G$-module, then the fibre
$\N_G(\VV)=\pi_\VV^{-1}(\pi_\VV(0))$ is called the {\it null-cone}. It is also true that
\[
    \N_G(\VV)=\{v\in\VV \mid \ov{G{\cdot}v}\ni 0\}.
\]
Recall that $\mu=-\lb^*$ is the lowest weight of $\vlb$. In the proof of Theorem~\ref{thm:sec-min-orb}, 
we considered the condition that $v_{\mu}\not\in\g{\cdot}v_\lb$. Actually, if $\g$ is simple, then this condition is not satisfied 
only for two cases.

\begin{lm}     \label{lm:spn-vp1}
Suppose that $\g$ is simple and $v_{\mu}\in\g{\cdot}v_\lb$. Then either 
$\g=\slno$ and $\{\lb,\lb^*\}=\{\varpi_1,\varpi_n\}$ or $\g=\spn$ and $\lb=\lb^*=\varpi_1$. 
\end{lm}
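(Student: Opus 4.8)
The plan is to reinterpret the condition $v_\mu\in\g\cdot v_\lb$ geometrically and translate it into a statement about tangent spaces of the minimal orbit, then exhaust the possibilities using the structure of the grading attached to a highest weight vector. First I would recall that $\g\cdot v_\lb$ is (up to the line $\langle v_\lb\rangle$, which it already contains) the affine tangent space to $\omin(\vlb)$ at $v_\lb$, so the hypothesis says that the lowest weight vector $v_\mu$ lies in this tangent space. Equivalently, in the weight decomposition, since $\g\cdot v_\lb = \langle v_\lb\rangle \oplus \ut^-\cdot v_\lb$ and $\ut^-\cdot v_\lb = \bigoplus_{\gamma\in\Delta^+} \g_{-\gamma}\cdot v_\lb$, the vector $v_\mu$ of weight $\mu=w_0(\lb)$ can only appear if $\lb-\mu$ is a positive root, i.e. $\lb - w_0(\lb) \in \Delta$, and moreover $e_{-(\lb-\mu)}\cdot v_\lb \ne 0$. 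So the key numerical condition is: $\lb - w_0(\lb)$ is a root of $\g$.

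Next I would analyze this root condition. Write $\lb = \sum_i m_i\varpi_i$ with $m_i\ge 0$ not all zero. Since $\lb-w_0(\lb)$ is a dominant weight that is also a root, it must be the highest root $\theta$ (the unique dominant root, using that $\g$ is simple), unless $\g$ has two root lengths and there is a second dominant short root $\theta_s$; but $\lb-w_0(\lb)$ is always in the root lattice and dominant, and among dominant roots only the highest root $\theta$ is dominant in the simply-laced case, while in the non-simply-laced case one must also check $\theta_s$. I would then observe that $\lb - w_0(\lb) = \theta$ forces $\lb$ to be "small": since $\langle \lb - w_0(\lb), \ap^\vee\rangle = \langle\theta,\ap^\vee\rangle$ for all simple $\ap$, and $\langle\theta,\ap^\vee\rangle \le 2$ with equality only for $\ap$ adjacent-free... more precisely one compares coefficients. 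This pins $\lb$ down to a fundamental weight $\varpi_i$ with $i$ a minuscule or near-minuscule node, and then a short case check over the Dynkin types (using the explicit highest roots) yields exactly the two listed families: $\gln$-type with $\lb\in\{\varpi_1,\varpi_n\}$ (the standard and dual standard representation of $\slno$), and $\spn$ with $\lb=\varpi_1$ (the defining representation). I would also double check that in these cases $v_\mu$ genuinely lies in $\g\cdot v_\lb$ (not merely that the weight appears), which is immediate: $\theta=\lb-\mu$ and $e_{-\theta}\cdot v_\lb$ is a nonzero lowest-weight vector since the $\tri$-subalgebra attached to $\theta$ acts on $\vlb$ with $v_\lb$ as a highest weight vector of a nontrivial irreducible summand.

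The main obstacle I expect is making the case analysis of "which $\lb$ satisfy $\lb - w_0(\lb) \in \Delta$" both complete and clean, especially handling $w_0 \ne -1$ (types $A_n$, $D_{2k+1}$, $E_6$) where $w_0(\lb) = -\lb^*$ involves the diagram automorphism, so the condition becomes $\lb + \lb^* = \theta$; this is what produces the pair $\{\varpi_1,\varpi_n\}$ in type $A$ rather than a self-dual weight. I would organize this by first reducing to $\lb$ fundamental (if $\lb$ has two nonzero coordinates or one coordinate $\ge 2$, then $\lb+\lb^*$ is too large to be a root, by comparing with the coefficients of $\theta$), and then running through the finitely many fundamental weights per type against the known expression for $\theta$. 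A cleaner alternative, which I might use to shortcut the bookkeeping, is the known classification of simple $G$-modules whose minimal orbit's tangent space contains the lowest weight line — equivalently, where $\CS(\omin(\vlb))$ fails to be "large" in the strongest way — but since the paper wants a self-contained treatment, the direct root-combinatorial argument above is the safe route.
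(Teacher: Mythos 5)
Your proposal is correct and follows essentially the same route as the paper: both reduce the hypothesis $v_\mu\in\g{\cdot}v_\lb$ to the statement that $\lb-\mu=\lb+\lb^*$ is a (necessarily dominant) positive root. The paper then finishes more briskly than your proposed type-by-type check --- a dominant root that is a sum of two nonzero dominant weights cannot be fundamental, and the only non-fundamental dominant roots are $\theta=\varpi_1+\varpi_n$ for $\slno$ and $\theta=2\varpi_1$ for $\spn$ --- but the content is the same.
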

\begin{proof}
Here $v_\mu\in\ut^-{\cdot}v_\lb$ and 
$\lb-\mu=\lb+\lb^*\in \Delta^+$. Hence a dominant root can be written as a sum of two dominant weights. This is only possible if $\lb+\lb^*=\theta$ and $\theta$ is not fundamental, i.e., if
$\g=\slno$ and $\theta=\varpi_1+\varpi_n$ or $\g=\spn$ and $\theta=2\varpi_1$.
\end{proof}

\begin{prop}     \label{prop:krit-Luna}
If\/ $\vlb$ is self-dual and $v_{\mu}\not\in\g{\cdot}v_\lb$, then the orbit $G{\cdot}h_\lb\subset\vlb$ 
is closed.
\end{prop}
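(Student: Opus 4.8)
The plan is to exploit the identification of $\g{\cdot}h_\lb$ worked out in the proof of Theorem~\ref{thm:sec-min-orb}, combined with Kostant-type rigidity for the orbit $G{\cdot}h_\lb$. Since $\vlb$ is self-dual, we have $\mu=-\lb$, so $h_\lb=v_\lb+v_\mu$ with $v_\mu$ a lowest weight vector; and since $v_\mu\notin\g{\cdot}v_\lb$, Remark~\ref{rem:defect}(1) applies, so we are in case {\bf (I)(b)} of the proof above. There it was shown that $\g{\cdot}h_\lb=(\ut^-{\cdot}v_\lb+\ut{\cdot}v_\mu)\oplus\lg v_\lb-v_\mu\rg$ and, crucially, that $h_\lb=v_\lb+v_\mu\notin\g{\cdot}h_\lb$ (the orbit is \emph{not} conical), since $\te{\cdot}h_\lb=\lg v_\lb-v_\mu\rg$. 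I would first record this as the starting point.

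Next I would argue that the $\bbk^*$-action by homotheties and the $G$-action combine to give a $G\times\bbk^*$-orbit $\eus K(G{\cdot}h_\lb)=\ov{\bbk^*{\cdot}G{\cdot}h_\lb}$ of dimension $\dim G{\cdot}h_\lb+1$, whose boundary $\eus K(G{\cdot}h_\lb)\setminus(\bbk^*{\cdot}G{\cdot}h_\lb)$ has codimension $\ge 2$ inside it, or better, is concretely identified. The key point: by Theorem~\ref{thm:sec-min-orb}(1), $\eus K(G{\cdot}h_\lb)=\CS(G{\cdot}v_\lb)$ is the affine cone over the secant variety, and $\BP(\eus K(G{\cdot}h_\lb))=\ov{G{\cdot}[h_\lb]}$. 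Every line through the origin in $\CS(G{\cdot}v_\lb)$ other than those meeting $G{\cdot}h_\lb$ must lie over the boundary of $\ov{G{\cdot}[h_\lb]}$ in $\BP(\vlb)$, i.e.\ over $\ov{G{\cdot}[h_\lb]}\setminus G{\cdot}[h_\lb]$, which is a union of $G$-orbits of strictly smaller dimension. So $\eus K(G{\cdot}h_\lb)\setminus\bbk^*{\cdot}G{\cdot}h_\lb$ has dimension $\le \dim\ov{G{\cdot}[h_\lb]}\setminus G{\cdot}[h_\lb] + 1 \le \dim G{\cdot}[h_\lb]-1+1=\dim G{\cdot}h_\lb-1$.

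Now I would close the argument by a dimension count on the $G$-orbit of $h_\lb$ alone. Because $G{\cdot}h_\lb$ is not conical, $\bbk^*{\cdot}G{\cdot}h_\lb$ is an irreducible locally closed subset of dimension $\dim G{\cdot}h_\lb+1$, and $G{\cdot}h_\lb$ is a hypersurface inside it, cut out by a single invariant-type equation (e.g.\ the pairing that distinguishes the ``norm'' of $h_\lb$); more precisely $G{\cdot}h_\lb=\{\,z\in\bbk^*{\cdot}G{\cdot}h_\lb \mid \beta(z)=1\,\}$ where $\beta$ is the (nonzero, since $\vlb$ self-dual) $G$-invariant bilinear form evaluated on $z\otimes z$, suitably normalised so that $\beta(h_\lb)=\beta(v_\lb+v_\mu)=2\langle v_\lb,v_\mu\rangle\ne 0$. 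Since $\beta$ is $G$-invariant and homogeneous of degree $2$ under homotheties, the level set $\{\beta=1\}$ is $G$-stable and closed in $\vlb$; it meets $\eus K(G{\cdot}h_\lb)$ in a closed $G$-stable set which, by the dimension bound of the previous paragraph, \emph{cannot} contain any of the small boundary orbits lying over $\ov{G{\cdot}[h_\lb]}\setminus G{\cdot}[h_\lb]$ except possibly in codimension $\ge 1$ — and since $\{\beta=1\}\cap\eus K(G{\cdot}h_\lb)$ is closed and contains the dense subset $G{\cdot}h_\lb$ of pure dimension $\dim G{\cdot}h_\lb$, while its only possible extra points sit over boundary orbits of dimension $\le \dim G{\cdot}h_\lb-2$ in $\BP(\vlb)$, it follows that $\{\beta=1\}\cap\eus K(G{\cdot}h_\lb)=G{\cdot}h_\lb$. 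Hence $G{\cdot}h_\lb$ is closed in $\eus K(G{\cdot}h_\lb)$, and since $\eus K(G{\cdot}h_\lb)$ is closed in $\vlb$, the orbit $G{\cdot}h_\lb$ is closed in $\vlb$.

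The main obstacle I anticipate is the bookkeeping in the last step: I must be sure that the boundary $\ov{G{\cdot}[h_\lb]}\setminus G{\cdot}[h_\lb]$ really has dimension $\le\dim G{\cdot}[h_\lb]-1$ (true, as the complement of a dense orbit) \emph{and} that every point of $\{\beta=1\}\cap\eus K(G{\cdot}h_\lb)$ genuinely lies either in $G{\cdot}h_\lb$ or over that boundary — equivalently, that $\bbk^*$ acts freely enough on $\{\beta=1\}\cap\eus K(G{\cdot}h_\lb)$ off $G{\cdot}h_\lb$ that no extra orbit of full dimension can appear. An alternative, cleaner route that sidesteps this: use the Hilbert--Mumford criterion directly. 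A $G$-orbit is closed iff it is the unique closed orbit in its fibre of the quotient map, and one can test this by showing no one-parameter subgroup $\gamma(t)$ of $G$ has $\lim_{t\to 0}\gamma(t){\cdot}h_\lb$ existing outside $G{\cdot}h_\lb$; any such limit would be a $T$-fixed-point-type degeneration, forcing $v_\mu\in\g{\cdot}v_\lb$ after projecting onto weight spaces, contradicting the hypothesis. I would present whichever of these is shorter, but flag the Hilbert--Mumford version as the conceptually safest.
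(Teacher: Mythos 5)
There is a genuine gap in your main argument, and in fact the strategy cannot work as designed. You build the closed $G$-stable set $S:=\{\beta=1\}\cap\eus K(G{\cdot}h_\lb)$, observe that it contains $G{\cdot}h_\lb$ densely, bound the dimension of its possible extra points, and conclude $S=G{\cdot}h_\lb$. But \emph{every} orbit closure has boundary of strictly smaller dimension, so ``the extra points are lower-dimensional'' can never by itself prove they are absent; to finish you would need $\beta\ne 1$ on the boundary orbits, and that is impossible in principle: $\beta$ is a $G$-invariant regular function, hence constant on the fibre $\pi^{-1}_{\vlb}(\pi_{\vlb}(h_\lb))$ of the categorical quotient, so it takes the value $1$ on the unique closed orbit inside $\ov{G{\cdot}h_\lb}$ as well. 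No invariant level set can separate $G{\cdot}h_\lb$ from that closed orbit. Two further problems: (a) when $\vlb$ is \emph{symplectic} self-dual (e.g.\ $\g=\spn$, $\lb=\varpi_1$, or $\g=\GR{E}{7}$, $\lb=\varpi_1$, both of which occur in Table~\ref{table:sporad}), the invariant bilinear form is alternating, so $\beta(z)=\lg z,z\rg\equiv 0$ and your normalisation $\beta(h_\lb)=2\lg v_\lb,v_\mu\rg\ne0$ fails; (b) there is an arithmetic slip, since the cone over the projective boundary has dimension $\le\dim G{\cdot}h_\lb$, not $\le\dim G{\cdot}h_\lb-1$. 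The Hilbert--Mumford alternative you flag as ``safest'' is not carried out either: the claim that a degenerating one-parameter subgroup would ``force $v_\mu\in\g{\cdot}v_\lb$ after projecting onto weight spaces'' is precisely the content that needs proof, and no derivation is offered. Notably, your write-up never isolates a step where the hypothesis $v_\mu\notin\g{\cdot}v_\lb$ does real work, which is a symptom that the degeneration to be excluded has not actually been excluded.

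For contrast, the paper's proof applies Luna's criterion to the reductive subgroup $H=(G^{h_\lb}\cap T)^0$, a codimension-one subtorus of $T$: closedness of $G{\cdot}h_\lb$ is equivalent to closedness of the $N_G(H)$-orbit, and $Z_G(H)/H$ is either $\bbk^*$ or $SL_2$. In the $SL_2$ case $h_\lb$ is the sum of a highest and a lowest weight vector in a simple $SL_2$-submodule $\sfr_j$, and the hypothesis $v_\mu\notin\g{\cdot}v_\lb$ enters exactly here, forcing $j>1$ and hence closedness of the $SL_2$-orbit (for $j=1$ the orbit of $v_\lb+v_\mu$ in $\sfr_1$ is $\bbk^2\setminus\{0\}$, which is not closed). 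Some reduction of this kind to a rank-one computation, or a genuine Kempf-style analysis of optimal destabilising one-parameter subgroups, seems unavoidable; a pure dimension count over the secant variety will not close the argument.
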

\begin{proof}
If $\vlb$ is self-dual, then $\mu=-\lb$ and $h_\lb=v_\lb+v_{-\lb}$. Let $H$ be a reductive subgroup of 
$G^{h_\lb}$. By Luna's criterion, $G{\cdot}h_\lb$ is closed {\sl if and only if} $N_G(H){\cdot}h_\lb$ is 
closed, see~\cite[Theorem\,6.17]{vp89}.

Take $H=(G^{h_\lb}\cap T)^0$, the identity component of $G^{h_\lb}\cap T$. It is a torus of 
codimension~1 in $T$. Then $N_G(H)^0=Z_G(H)$ and $Z_G(H)$ is either $T$ or $H{\cdot}SL_2$.
The latter occurs only if $\lb$ is proportional to a root. In both cases, one has to prove that
the $Z_G(H)/H$-orbit of $h_\lb$ is closed. For $Z_G(H)/H\simeq\bbk^*$,  the assertion follows from
\cite[Prop.\,6.15]{vp89}.

If $Z_G(H)/H\simeq SL_2$, then $h_\lb$ is the sum of a highest and lowest weight vectors in a
simple $SL_2$-submodule $\sfr_j\subset\vlb$, where $\dim\sfr_j=j+1$. (If $\mu\in\Delta^+$ is the root of 
$SL_2\subset Z_G(H)$, then $j=(\lb,\mu^\vee)$.) The assumption that 
$v_\mu\not\in\g{\cdot}v_\lb$ guarantees us
that $j>1$ and therefore $SL_2{\cdot}h_\lb$ is closed in $\sfr_j$.
\end{proof}
\noindent
Using Theorem~\ref{thm:sec-min-orb}(1), 
we can  characterise the cases in which $\CS(\omin(\vlb))= \vlb$.

\begin{thm}       \label{thm:kogda-nevyrozhd}
Let $\vlb$ be a simple $G$-module.
\begin{itemize}
\item[\sf (i)] \ If\/ $\vlb\not\simeq(\vlb)^*$, then $\CS(\omin(\vlb))\subset\N_G(\vlb)$.
Moreover, if\/ $\vlb\md G\ne\{pt\}$ (i.e., $\bbk[\vlb]^G\ne \bbk$), then
$\CS(\omin(\vlb))\subset\N_G(\vlb)\subsetneqq \vlb$;
\item[\sf (ii)] \ If\/ $\vlb\simeq(\vlb)^*$ and $\dim\vlb\md G> 1$, then
$\CS(\omin(\vlb))\subsetneqq \vlb$;
\item[\sf (iii)] \ $\CS(\omin(\vlb))=\vlb$ if and only if\/ $\g{\cdot}v_\lb+\g{\cdot}v_\mu=\g$.
\end{itemize}
\end{thm}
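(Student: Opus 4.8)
The plan is to deduce everything from the explicit description $\CS(\omin(\vlb))=\eus K(G{\cdot}h_\lb)$ of Theorem~\ref{thm:sec-min-orb}(1), where $h_\lb=v_\lb+v_\mu$. First I would establish part~{\sf (iii)}, which is almost tautological: $\CS(\omin(\vlb))=\vlb$ forces $\dim\eus K(G{\cdot}h_\lb)=\dim\vlb$, and since $\eus K(G{\cdot}h_\lb)$ is an irreducible cone, this happens if and only if $G{\cdot}h_\lb$ is dense in $\vlb$, i.e. $\dim G{\cdot}h_\lb=\dim\vlb$ (when $G{\cdot}h_\lb$ is already conical) or $\dim G{\cdot}h_\lb=\dim\vlb-1$ with $h_\lb$ not in $\g{\cdot}h_\lb$. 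In either case the tangent space condition $\g{\cdot}h_\lb=\vlb$ is what is needed, and one checks via $\g{\cdot}h_\lb=\ut^-{\cdot}v_\lb+\ut{\cdot}v_\mu+\te{\cdot}h_\lb$ (as in the proof of Theorem~\ref{thm:sec-min-orb}) that $\g{\cdot}h_\lb=\vlb$ is equivalent to $\g{\cdot}v_\lb+\g{\cdot}v_\mu=\vlb$; note $\te{\cdot}h_\lb$ already contains $v_\lb,v_\mu$ when $\lb\ne-\mu$, and when $\lb=-\mu$ one has $v_\lb\pm v_\mu\in\g{\cdot}h_\lb$ anyway since $h_\lb\in G{\cdot}h_\lb$ is conical only in the degenerate situation — in all relevant cases the span of the orbit equals the span of $\g{\cdot}v_\lb$ and $\g{\cdot}v_\mu$ together with $\lg v_\lb,v_\mu\rg$, which is absorbed.

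For part~{\sf (i)}: if $\vlb\not\simeq(\vlb)^*$ then there are no nonzero $G$-invariant linear or bilinear forms pairing $\vlb$ with itself in the relevant way; more to the point, the grading argument applies. The key step is to exhibit a $1$-parameter subgroup (or a $G$-invariant function) showing $0\in\ov{G{\cdot}h_\lb}$. Here I would use the $\BZ$-grading or rather the character: since $\vlb\not\simeq(\vlb)^*$, the weights $\lb$ and $\mu=-\lb^*$ are distinct and their sum $\lb+\mu$ is a nonzero weight of $(\vlb)^{\otimes 2}$ that is not $0$; equivalently there is a cocharacter $\gamma$ of $T$ with $\langle\lb,\gamma\rangle>0$ and $\langle\mu,\gamma\rangle>0$ (because $\lb$ is dominant and $\mu=w_0\lb$, and $\lb\ne-\mu$ means $\lb+\mu\ne 0$; choosing $\gamma$ deep in the dominant chamber makes both pairings positive unless $\mu$ is antidominant enough — one must be slightly careful, but $\lb$ dominant nonzero and $\lb\ne -w_0\lb$... actually the clean statement is that $h_\lb$ lies in $\N_G(\vlb)$ because the closure of its orbit contains $0$, which follows since $\vlb\not\simeq(\vlb)^*$ implies every $G$-invariant on $\vlb$ vanishes on $h_\lb$: indeed $\bbk[\vlb]^G$ has no component in degrees forcing a pairing of $\vlb$ with $\vlb$, but rather with $(\vlb)^*$, and $h_\lb\in\vlb$ evaluates to $0$ on all of these by weight reasons). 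Hence $\eus K(G{\cdot}h_\lb)\subset\N_G(\vlb)$, giving the first assertion; and if $\bbk[\vlb]^G\ne\bbk$ then $\N_G(\vlb)\subsetneqq\vlb$, giving the refinement.

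For part~{\sf (ii)}: when $\vlb\simeq(\vlb)^*$, $h_\lb=v_\lb+v_{-\lb}$ and the relevant invariant is the quadratic (or, for non-orthogonal self-dual modules, the appropriate even-degree) $G$-invariant form $Q$ on $\vlb$; one computes $Q(h_\lb)=Q(v_\lb)+2\langle v_\lb,v_{-\lb}\rangle_Q+Q(v_{-\lb})$, and since $v_\lb,v_{-\lb}$ are isotropic highest/lowest weight vectors with nonzero pairing, $Q(h_\lb)\ne 0$; therefore $h_\lb\notin\N_G(\vlb)$, yet if additionally $\dim\vlb\md G>1$ there is a second independent invariant $P$, and the ratio $[P:Q^{\deg P/2}]$ (or a similar relative invariant) is nonconstant along the line $\bbk^*h_\lb$... hmm, rather: $\eus K(G{\cdot}h_\lb)$ is a single irreducible cone, hence lies in a single fibre of all homogeneous invariants of one degree only after rescaling — the point is simply that $\dim\eus K(G{\cdot}h_\lb)\le\dim G{\cdot}h_\lb+1$, and separately, since $\vlb\md G$ has dimension $>1$, the generic fibre of $\pi_\vlb$ has dimension $\dim\vlb - \dim(\vlb\md G)<\dim\vlb-1$, while $\eus K(G{\cdot}h_\lb)$ is contained in the closure of the fibre through $h_\lb$ (all invariants are constant up to the homothety grading on the cone, so on $\bbk^*h_\lb$ the invariant ratios are fixed), forcing $\dim\CS(\omin(\vlb))\le\dim\vlb-1$.

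The main obstacle I anticipate is part~{\sf (ii)} and the precise bookkeeping in part~{\sf (i)}: making rigorous that $\eus K(G{\cdot}h_\lb)$ is confined to a proper closed subset cut out by invariants when $\dim\vlb\md G>1$. The cleanest route is: the $\bbk^*$-action by homotheties commutes with $G$ and acts on $\vlb\md G$; the orbit $\eus K(G{\cdot}h_\lb)$ maps onto the closure of $\bbk^*{\cdot}\pi_\vlb(h_\lb)$ in $\vlb\md G$, which is at most $1$-dimensional; so the image of $\CS(\omin(\vlb))$ in $\vlb\md G$ has dimension $\le 1<\dim\vlb\md G$, whence $\CS(\omin(\vlb))\ne\vlb$. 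This single argument in fact subsumes both the refinement in {\sf (i)} and all of {\sf (ii)}, once one knows $\pi_\vlb(h_\lb)\ne\pi_\vlb(0)$ in the self-dual case (from $Q(h_\lb)\ne0$) and may or may not equal it in the non-self-dual case (irrelevant, since $\dim\le 1$ is all we need). I would present {\sf (iii)} first as the computational core, then derive {\sf (i)} and {\sf (ii)} from the homothety-on-quotient observation together with Lemma~\ref{lm:spn-vp1} to handle the two exceptional modules separately (where $v_\mu\in\g{\cdot}v_\lb$ and the formulas degenerate).
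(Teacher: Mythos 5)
Your proposal is correct and follows essentially the same route as the paper: all three parts are deduced from the description $\CS(\omin(\vlb))=\eus K(G{\cdot}h_\lb)$ of Theorem~\ref{thm:sec-min-orb}, with {\sf (i)} via the halfspace/weight argument placing $h_\lb$ in $\N_G(\vlb)$, {\sf (ii)} via the observation that the image of $\eus K(G{\cdot}h_\lb)$ in $\vlb\md G$ is at most one-dimensional, and {\sf (iii)} via the tangent-space identity $\dim(\g{\cdot}v_\lb+\g{\cdot}v_\mu)=\dim\CS(\omin(\vlb))$. The only (harmless) deviation is that in {\sf (ii)} you bound the dimension of the image in the quotient directly, so you do not need the closedness of $G{\cdot}h_\lb$ (Proposition~\ref{prop:krit-Luna}) that the paper invokes.
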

\begin{proof}
{\sf (i)} \ If $\vlb\not\simeq(\vlb)^*$, then $\lb\ne\lb^*$ and 
the weights $\lb$ and $\mu=-\lb^*$ belong to an open
halfspace of $\te^*_\BQ$. Therefore $h_\lb\in \N_G(\vlb)$ and hence 
$\eus K(G{\cdot}h_\lb)\subset \N_G(\vlb)$. If $\vlb\md G\ne\{pt\}$, then $\N_G(\vlb)$ is a proper subvariety of $\vlb$.

{\sf (ii)} \ If $\vlb\simeq(\vlb)^*$, then $G{\cdot}h_\lb=\ov{G{\cdot}h_\lb}$ and the $G$-variety 
$Z:=\eus K(G{\cdot}h_\lb)$ contains a $1$-parameter family of closed $G$-orbits as a dense subset.
Hence $\dim Z\md G=1$. 
Therefore, if $\dim\vlb\md G> 1$, then $Z$ is a proper subvariety of $\vlb$.

{\sf (iii)} \ By Theorem~\ref{thm:sec-min-orb} and \eqref{eq:aff-defect}, $\dim(\g{\cdot}v_\lb+\g{\cdot}v_\mu)=2\dim \g{\cdot}v_\lb-\delta=
\dim\CS(\omin(\vlb))$.
\end{proof}

It follows that the irreducible representations of {\bf simple} algebraic groups $G$ such that 
$\CS(\omin(\vlb))= \vlb$ can be extracted from the lists of
\\ \indent
(a) \ the non-self-dual $G$-modules $\vlb$ with $\bbk[\vlb]^G=\bbk$, and
\\ \indent
(b) \ the self-dual  $G$-modules $\vlb$ with $\dim\vlb\md G\le 1$.

\noindent
All such representations occur in the "Summary Table" in~\cite{vp89}, where the irreducible
representations of simple Lie groups with polynomial algebras of invariants are listed. If $\vlb$ satisfies 
either (a) or (b), then the condition $\eus K(G{\cdot}h_\lb)=\vlb$ means that $h_\lb\in\vlb$ is a {\it point of 
general position} in the sense of~\cite{ag72}. Since Table~1 in~\cite{ag72} explicitly indicates such points for all representations in question, one easily obtains the following tables, where  the relevant
highest weights $\lb$ and the defect $\delta=\delta_{\BP(\omin(\lb))}$ are given. If $\vlb$ is
self-dual, then we point out whether it is orthogonal or symplectic. The numbering of simple roots 
follows~\cite{t41} and we write $\varpi_i$ in place of $\varpi_{\ap_i}$.

\begin{table}[ht]
\caption{The representations with $\CS(\omin(\vlb))=\vlb$: serial cases}   \label{table:serial}
\begin{tabular}{ >{$}c<{$}| >{$}c<{$} >{$}c<{$} >{$}c<{$} >{$}c<{$} cc}
\g & \lb & \dim\vlb & \dim\omin(\vlb) & \delta & self-dual & $\dim\vlb\md G$ \\ \hline\hline
\GR{A}{n}, n\ge1 & \varpi_1,\varpi_n & n{+}1 & n{+}1 & n{+}1 & {\it no} & 0 \\
\GR{B}{n}, n\ge 2 & \varpi_1 & 2n{+}1 & 2n & 2n{-}1 & orth. & 1 \\
\GR{C}{n}, n\ge 2 & \varpi_1 & 2n & 2n & 2n & sympl. & 0 \\
\GR{D}{n}, n\ge 5 & \varpi_1 & 2n & 2n{-}1 & 2n{-}2 & orth. & 1\\
\hline
\end{tabular}
\end{table}

\begin{table}[ht]
\caption{The representations with $\CS(\omin(\vlb))=\vlb$: sporadic cases}   \label{table:sporad}
\begin{tabular}{ >{$}c<{$}| >{$}c<{$} >{$}c<{$} >{$}c<{$} >{$}c<{$} cc}
\g & \lb & \dim\vlb & \dim\omin(\vlb) & \delta & self-dual & $\dim\vlb\md G$ \\ \hline\hline
\GR{A}{1} & 2\varpi_1 & 3 & 2 & 1 & orth. & 1 \\
                & 3\varpi_1 & 4 & 2 & 0 & sympl. & 1 \\
\GR{A}{3} & \varpi_2 & 6 & 5 & 4 & orth. & 1 \\
\GR{A}{4} & \varpi_2,\varpi_3 & 10 & 7 & 4 & {\it no} & 0 \\
\GR{A}{5} & \varpi_3 & 20 & 10 & 0 & sympl. & 1 \\
\GR{B}{3} & \varpi_3 & 8 & 7 & 6 & orth. & 1 \\
\GR{D}{4} & \varpi_1,\varpi_3,\varpi_4 & 8 & 7 & 6 & orth. & 1\\
\GR{B}{4} & \varpi_4 & 16 & 11 & 6 & orth. & 1 \\
\GR{C}{3} & \varpi_3 & 14 & 7 & 0 & sympl. & 1 \\
\GR{D}{5} & \varpi_4,\varpi_5 & 16 & 11 & 6 & {\it no} & 0 \\
\GR{D}{6} & \varpi_5,\varpi_6 & 32 & 16 & 0 & sympl. & 1 \\
\GR{E}{7} & \varpi_1 & 56 & 28 & 0 & sympl. & 1 \\
\GR{G}{2} & \varpi_1 & 7 & 6 & 5 & orth. & 1 \\
\hline
\end{tabular}
\end{table}

\begin{rmk}
The simple $G$-modules with $\delta>0$ have been listed in~\cite{kaji99}. The representations in 
Tables~\ref{table:serial} and~\ref{table:sporad} that do not occur in \cite{kaji99} are those with 
$\delta=0$.
\end{rmk}

We can also provide a quick approach to the classification in~\cite{kaji99}, which is outlined below.
For $\ap\in\Pi$, let $[\theta:\ap]$ denote the coefficient of $\ap$  in the expression of $\theta$ via $\Pi$. 
Set $r_\ap=\frac{(\theta,\theta)}{(\ap,\ap)}\in \{1,2,3\}$. In particular, $r_\ap=1$ for all $\ap$ in the 
simply-laced case.

\begin{lm}         \label{lm:lb-theta=1}
Let $\lb$ be a dominant weight. Then $(\lb,\theta^\vee)=1$ if and only if $\lb=\varpi_\ap$ is fundamental 
and $[\theta:\ap]=r_\ap$. 
\end{lm}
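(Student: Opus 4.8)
The plan is to reduce the statement to a small calculation about the coefficients of the highest root $\theta$ and the pairing with simple coroots. First I would rewrite the condition $(\lb,\theta^\vee)=1$ in terms of the expansion $\lb=\sum_{\ap\in\Pi}c_\ap\varpi_\ap$ with $c_\ap\in\BZ_{\ge 0}$. Since $\theta^\vee=\sum_{\ap\in\Pi}[\theta^\vee:\ap^\vee]\,\ap^\vee$ in the dual root system, and $(\varpi_\ap,\beta^\vee)=\delta_{\ap\beta}$ for $\ap,\beta\in\Pi$, we get
\[
   (\lb,\theta^\vee)=\sum_{\ap\in\Pi}c_\ap\,[\theta^\vee:\ap^\vee].
\]
Because all coefficients $[\theta^\vee:\ap^\vee]$ are strictly positive integers (the highest coroot involves every simple coroot with positive coefficient, $\theta$ being the highest root and $\g$ simple), the sum equals $1$ if and only if there is exactly one $\ap$ with $c_\ap\ne 0$, this $c_\ap=1$, and $[\theta^\vee:\ap^\vee]=1$. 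The first two conditions say precisely that $\lb=\varpi_\ap$ is fundamental.

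Next I would translate the remaining condition $[\theta^\vee:\ap^\vee]=1$ into the stated condition $[\theta:\ap]=r_\ap$. Here one compares the expansion of $\theta$ in the simple roots $\{\beta\}$ with the expansion of $\theta^\vee$ in the simple coroots $\{\beta^\vee\}$. Writing $\theta=\sum_\beta m_\beta\,\beta$ and using $\beta^\vee=\tfrac{2}{(\beta,\beta)}\beta$ together with $\theta^\vee=\tfrac{2}{(\theta,\theta)}\theta$, one obtains
\[
   \theta^\vee=\frac{2}{(\theta,\theta)}\sum_\beta m_\beta\,\beta
             =\sum_\beta m_\beta\,\frac{(\beta,\beta)}{(\theta,\theta)}\,\beta^\vee
             =\sum_\beta \frac{m_\beta}{r_\beta}\,\beta^\vee,
\]
so $[\theta^\vee:\ap^\vee]=[\theta:\ap]/r_\ap$. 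Hence $[\theta^\vee:\ap^\vee]=1$ is equivalent to $[\theta:\ap]=r_\ap$, which closes the argument.

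The only point requiring a little care — and the main (mild) obstacle — is the assertion that every $[\theta^\vee:\ap^\vee]$ is a positive integer. Positivity is standard: for simple $\g$ the highest root (and highest coroot) has full support. Integrality of $[\theta^\vee:\ap^\vee]=[\theta:\ap]/r_\ap$ is not automatic from the displayed identity alone, but it follows because $\theta^\vee$ is itself a root of the dual root system $\Delta^\vee$ and therefore lies in the coroot lattice $\bigoplus_\beta\BZ\beta^\vee$; alternatively one checks it directly from the (short) tables of highest roots, noting that $r_\ap>1$ only for the short simple roots in the non-simply-laced cases, where indeed $r_\ap\mid[\theta:\ap]$. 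Either way the bookkeeping is immediate, and no serious difficulty arises.
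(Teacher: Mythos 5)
Your proof is correct and takes essentially the same route as the paper's: both reduce to the identity $(\lb,\theta^\vee)=\sum_{\ap}c_\ap\,[\theta:\ap]/r_\ap$ and the observation that each coefficient $[\theta:\ap]/r_\ap=[\theta^\vee:\ap^\vee]$ is a positive integer, forcing $\lb$ to be a single fundamental weight $\varpi_\ap$ with $[\theta:\ap]=r_\ap$. Your justification of the integrality (via $\theta^\vee$ lying in the coroot lattice of the dual root system) is a slightly more explicit version of the paper's one-line remark that $\theta$ is a long root.
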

\begin{proof}
Write $\lb=\sum_{i=1}^l a_i\varpi_i$ and $\theta=\sum_{i=1}^l n_i\ap_i$, i.e., $n_i=[\theta:\ap_i]$. 
(Here $l=\rk\,\g$.) Then
\[
   (\lb,\theta^\vee)=\frac{2}{(\theta,\theta)}\sum_{l=1}^l a_i n_i (\varpi_i,\ap_i)=
   \sum_{l=1}^l a_i n_i \frac{(\ap_i,\ap_i)}{(\theta,\theta)}=\sum_{l=1}^l a_i \frac{n_i}{r_i} ,
\]
where $r_i=r_{\ap_i}$. Since $\theta$ is a long root, we have $n_i/r_i\in\BN$ for all $i$. 
The assertion follows.
\end{proof}
 
\begin{thm}        \label{thm:delta>0}
Suppose that $\delta=\delta_{G{\cdot}[v_\lb]}>0$. Then 
\begin{itemize}
\item[\sf (i)] \ $(\lb,\theta^\vee)=(\lb^*,\theta^\vee)\in\{1,2\}$; 
\item[\sf (ii)] \ If $(\lb,\theta^\vee)=2$, then $\delta=1$ and either $\lb=\lb^*=\theta$ (the adjoint 
representation) or $\g=\slno$ and $\{\lb,\lb^*\}=\{2\varpi_1,2\varpi_n\}$.
\end{itemize}
\end{thm}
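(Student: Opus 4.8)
The plan is to deduce Theorem~\ref{thm:delta>0} from the combination of Theorem~\ref{thm:sec-min-orb}(2), which says $\delta = \dim(\g{\cdot}v_\mu \cap \g{\cdot}v_\lb)$, and the representation-theoretic bookkeeping provided by Lemma~\ref{lm:lb-theta=1} and Lemma~\ref{lm:spn-vp1}. The central observation is that $\delta > 0$ forces the two tangent spaces $\g{\cdot}v_\lb$ and $\g{\cdot}v_\mu$ to share a nonzero vector, and I want to translate this into a weight-multiplicity statement. Since $\g{\cdot}v_\lb = \be^-{\cdot}v_\lb = \langle v_\lb\rangle \oplus \ut^-{\cdot}v_\lb$, the $\te$-weights occurring in $\g{\cdot}v_\lb$ are exactly $\lb$ together with those $\lb - \ap$ ($\ap \in \Delta^+$) for which $\g_{-\ap}{\cdot}v_\lb \ne 0$, i.e. for which $(\lb,\ap^\vee) > 0$; similarly $\g{\cdot}v_\mu$ has weights $\mu$ and $\mu + \ap$ with $(\mu,\ap^\vee) < 0$, equivalently $\mu + \ap = -(\lb^* - \ap)$ with $(\lb^*,\ap^\vee)>0$. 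For a common weight to exist, I need $\lb - \ap = \mu + \beta = -\lb^* + \beta$ for suitable positive roots $\ap,\beta$ with the appropriate positivity conditions, which gives $\lb + \lb^* = \ap + \beta$ — a sum of two positive roots.

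The first step is to bound $(\lb,\theta^\vee) + (\lb^*,\theta^\vee)$. Because $\lb + \lb^*$ is $w_0$-invariant up to sign (indeed $\lb^* = -w_0\lb$, so $\lb + \lb^* = \lb - w_0\lb$ is a nonnegative sum of positive roots fixed by $-w_0$), and because it equals a sum $\ap + \beta$ of two positive roots whose heights are each at most $\hot(\theta)$, one gets $\lb + \lb^* \preccurlyeq 2\theta$ in the root order, hence $(\lb + \lb^*, \theta^\vee) \le 2(\theta,\theta^\vee) = 4$; actually I should argue more carefully using that $\theta$ is the highest root, so any positive root $\gamma$ satisfies $(\gamma,\theta^\vee) \le 2$, whence $(\ap,\theta^\vee) + (\beta,\theta^\vee) \le 4$ but equality $=4$ would force $\ap = \beta = \theta$ and then $\lb + \lb^* = 2\theta$. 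Combined with $(\lb,\theta^\vee) = (\lb^*,\theta^\vee)$ — which holds because $\theta^\vee$ is $-w_0$-fixed and $\lb^* = -w_0\lb$ — this yields $(\lb,\theta^\vee) \in \{1,2\}$ once one rules out $(\lb,\theta^\vee) = 0$, which is immediate since then $\lb = \lb^* = 0$ and $\vlb$ is trivial. That settles (i).

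For (ii), assume $(\lb,\theta^\vee) = 2$, so $\lb + \lb^* = 2\theta$ as a weight and the only way to write $2\theta$ as a sum of two positive roots each contributing $2$ to $\langle\cdot,\theta^\vee\rangle$ is $\theta + \theta$, so $\lb \preccurlyeq \theta$ and $\lb^* \preccurlyeq \theta$ in the weight order. By Lemma~\ref{lm:lb-theta=1}, $(\lb,\theta^\vee) = 2$ with $\lb$ dominant means either $\lb = \theta$, or $\lb = \varpi_\ap + \varpi_\beta$ with $[\theta:\ap] = r_\ap$, $[\theta:\beta] = r_\beta$, or $\lb = 2\varpi_\ap$ with $2[\theta:\ap] = 2r_\ap$ i.e. $[\theta:\ap] = r_\ap$; but the constraint $\lb \preccurlyeq \theta$ severely restricts the options. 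I then enumerate: $\lb = \theta$ gives the adjoint case with $\delta = \dim\z(\g^e) $-type computation — actually here I should directly compute $\delta = \dim(\g{\cdot}v_\theta \cap \g{\cdot}v_{-\theta})$ for the adjoint module and get $\delta = 1$, the common vector being spanned by $[e_\theta, e_{-\theta}] \in \te$. The remaining dominant $\lb \preccurlyeq \theta$ with $(\lb,\theta^\vee)=2$ that are not $\theta$ itself: scanning the classical and exceptional cases, the only one for which $\vlb$ genuinely has the required secant geometry is $\g = \slno$ with $\lb = 2\varpi_1$ (and its dual $2\varpi_n$), where $\theta = \varpi_1 + \varpi_n$, $2\varpi_1 \preccurlyeq 2\theta$, and one checks $\delta = 1$ using Lemma~\ref{lm:spn-vp1} as a guide for the tangent-space intersection. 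The main obstacle will be this final case analysis: proving that no other $\lb$ with $(\lb,\theta^\vee)=2$ and $\lb\preccurlyeq\theta$ actually yields $\delta>0$, which requires either a uniform argument (e.g. showing $\lb+\lb^* = 2\theta$ as weights forces $\lb$ proportional to $\theta$ unless $\g$ has a node $\ap$ with $[\theta:\ap]=1$ at the end of the diagram, i.e. type $A$) or a short type-by-type check that the candidate common weight vector does not in fact lie in both tangent spaces. I would handle it by the proportionality argument: $\lb + \lb^* = 2\theta$ and $\lb, \lb^*$ dominant with $\lb \preccurlyeq \theta$ forces, writing $\lb = \theta - \sum c_\ap\ap$, the relation $\lb^* = \theta + \sum c_\ap\ap$ dominant, which combined with $\lb^* \preccurlyeq \theta$ gives $\sum c_\ap \ap \preccurlyeq 0$ hence $c_\ap = 0$ and $\lb = \lb^* = \theta$ — unless some $c_\ap$ can be negative, which happens precisely when $\lb$ has support avoiding the long root string, i.e. exactly the type-$A$ exception; this pins down the statement.
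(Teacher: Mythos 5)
Your part {\sf (i)} is essentially the paper's argument: a nonzero vector in $\g{\cdot}v_\lb\cap\g{\cdot}v_\mu$ forces $\lb+\lb^*=\nu+\nu'$ with $\nu,\nu'\in\Delta^+\cup\{0\}$; since every root pairs with $\theta^\vee$ to at most $2$ while $(\lb,\theta^\vee)=(\lb^*,\theta^\vee)$ is a positive integer, one gets $(\lb,\theta^\vee)\in\{1,2\}$. (Do keep the possibility $\nu=0$ or $\nu'=0$ in view --- the common vector could a priori be a multiple of $v_\lb$ or $v_\mu$ itself; it does not affect the bound, but your formulation with two positive roots silently drops it.)

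Part {\sf (ii)} has a genuine gap. From $\nu=\nu'=\theta$ you correctly get $\lb+\lb^*=2\theta$, but your next assertion, that this forces $\lb\preccurlyeq\theta$ and $\lb^*\preccurlyeq\theta$, is unjustified and in fact false in exactly the exceptional case you are trying to isolate: for $\g=\slno$ and $\lb=2\varpi_1$ one has $\theta-\lb=\varpi_n-\varpi_1$, whose coefficient at $\ap_1$ is $(1-n)/(n+1)<0$, so $2\varpi_1\not\preccurlyeq\theta$. Consequently the ``proportionality argument'' you build on writing $\lb=\theta-\sum_\ap c_\ap\ap$ collapses, and your escape clause (``unless some $c_\ap$ can be negative, which happens precisely in type $A$'') is an unproved assertion that contains the whole difficulty; likewise ``scanning the classical and exceptional cases'' is where the actual content lies and is not carried out. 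The clean resolution --- the one the paper uses --- is to expand $\lb+\lb^*=2\theta$ in the basis of \emph{fundamental weights} rather than simple roots: both summands are dominant, so if $\theta$ is a multiple of a single fundamental weight (every simple $\g$ except $\slno$) then $\lb$ and $\lb^*$ are multiples of that same fundamental weight and $(\lb,\theta^\vee)=2$ pins down $\lb=\lb^*=\theta$; for $\slno$ one has $\theta=\varpi_1+\varpi_n$, which admits the one extra pair $\{\lb,\lb^*\}=\{2\varpi_1,2\varpi_n\}$. Finally, $\delta=1$ follows uniformly, with no ad hoc computation of $[e_\theta,e_{-\theta}]$ and no case check: any common weight of the two tangent spaces has the form $\lb-\nu=-\lb^*+\nu'$ with $\nu+\nu'=2\theta$, hence $\nu=\nu'=\theta$, so $\lb-\theta$ is the only candidate weight, and each of $\g_{-\theta}{\cdot}v_\lb$ and $\g_{\theta}{\cdot}v_\mu$ is at most one-dimensional.
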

\begin{proof}
{\sf (i)} \ If $\delta=\dim(\g{\cdot}v_\lb\cap\g{\cdot}v_{-\lb^*})>0$, 
then there are $\nu,\nu'\in\Delta^+\cup\{0\}$ such that $\lb-\nu=-\lb^*+\nu'$. Hence
$\lb+\lb^*=\nu+\nu'$ is a dominant weight in the root lattice. Since $\theta=\theta^*$,
we have $(\lb,\theta^\vee)=(\lb^*,\theta^\vee)$ is a positive integer. On the other hand,
$(\nu,\theta^\vee), (\nu',\theta^\vee)\in\{0,1,2\}$. This implies that
\[
     2(\lb,\theta^\vee)=(\lb+\lb^*,\theta^\vee)=(\nu+\nu',\theta^\vee)\in \{2,4\} .
\]
\indent
{\sf (ii)} \ If $(\lb,\theta^\vee)=2$, then $(\nu,\theta^\vee)=(\nu',\theta^\vee)=2$. Hence $\nu=\nu'=\theta$  and $\lb+\lb^*=2\theta$. If $\theta$ is a multiple of a fundamental weight, then the only possibility is
$\lb=\lb^*=\theta$. For $\g=\slno$, one has $\theta=\varpi_1+\varpi_n$, which yields one more
possibility. In either case, $\lb-\theta=-\lb^*+\theta$ is the only weight occurring in
$\g{\cdot}v_\lb\cap\g{\cdot}v_{-\lb^*}$, i.e., $\delta=1$.
\end{proof}

\begin{rmk}
It is worth noting that not every $\lb=\varpi_\ap$ such that $[\theta:\ap]=r_\ap$ gives rise to the 
$G$-module $\vlb$ with $\delta_{G{\cdot}[v_\lb]}>0$. For $\g=\slno$ or $\spn$, all simple roots $\ap$ 
satisfy that condition, while one has $\delta_{G{\cdot}[v_\lb]}=0$ in many cases. 
We can prove the following assertion, whose proof and applications
will appear in a forthcoming publication:

\emph{If $\lb$ is dominant and $(\lb,\theta^\vee)=1$, then
$\delta_{G{\cdot}[v_\lb]}>0$ if and only 
if\/ $\lb+\lb^*-\theta\in (\Delta^+\setminus\{\theta\})\cup\{0\}$. 
Moreover, in this case $\delta\ge 2$.}
\end{rmk}

%%%%  subsection   %%%%%%
\subsection{The minimal nilpotent orbit in $\g$}
\label{subs:omin}
For a simple Lie algebra $\g\simeq\VV_\theta$, we usually write $\omin$ in place of $\omin(\g)$ and say that
$\omin\subset\g$ is the {\it minimal nilpotent orbit}. If $\gamma\in\Delta$ is a long root and
$e_\gamma\in\g^\gamma$, then $\omin=G{\cdot}e_\gamma$. (In the simply-laced case, all roots are 
assumed to be long.)

Let $\{e,h,f\}$ be an $\tri$-triple with $e\in\omin$. Such an $\tri$-triple is said to be {\it minimal}.  
For the minimal $\tri$-triples, the $(\BZ,h)$-grading of $\g$ has the following structure:
\beq       \label{eq:Z-h-grad}
\text{$\g=\textstyle \bigoplus_{j=-2}^2 \g(i)$, \ $\g(2)=\lg e\rg$, \ and \ $\g(-2)=\lg f\rg$.}
\eeq
Since $\dim\g^e=\dim\g(0)+\dim\g(1)$ and $\g^h=\g(0)$, we have 
$\dim\omin=\dim\g(1)+2$ and $\dim G{\cdot}h=2\dim\g(1)+2$. Hence
\beq       \label{eq:dim-Gh}
      \dim G{\cdot}h= 2\dim\omin -2 .
\eeq
Set $\es=\lg e,h,f\rg\simeq \tri$. Let $\sfr_i$ denote a simple $\tri$-module of dimension $i+1$. It follows from~\eqref{eq:Z-h-grad} that 
\[
    \g\vert_\es=m_0\sfr_0+m_1\sfr_1+\sfr_2 ,
\]
where $m_0=\dim\g(0)-1$ and $m_1=\dim\g(1)$. The  $G$-orbit $\omin$ is characterised by the 
property that $\g\vert_\es$ contains no $\sfr_j$ with $j\ge 3$ and $\sfr_2$ occurs only once. 
A formulation that does not invoke  $\tri$-triples and $\BZ$-gradings is
\beq       \label{eq:ohne-Z-grad}
   \text{ $e\in\omin$ if and only if \ $\Ima((\ad e)^2)=\lg e\rg$.}
\eeq
Hence $\hot(\omin)=2$. However, if $\g\ne\tri$ or $\mathfrak{sl}_3$, then there are other 
$G$-orbits of height $2$.
\\ \indent
Let $\theta\in \Delta^+$ be the highest root. Then $e_\theta\in \g^\theta$ is a highest weight vector
and $f_{\theta}\in \g^{-\theta}$ is a lowest weight vector. Under a suitable adjustment, three elements
$e_\theta, h_\theta=[e_\theta, f_\theta], f_\theta$ form an $\tri$-triple. Here $e_\theta +f_\theta$ is 
conjugate to $h_\theta$ in $\lg e_\theta, h_\theta, f_\theta\rg\simeq \tri$ and hence in $\g$. Therefore, it 
follows from Theorem~\ref{thm:sec-min-orb}(1)  that
\beq         \label{eq:secant-omin}
   \Sec(\BP(\omin))=G{\cdot}[h] \ \text{ and } \CS(\omin)=\eus K(G{\cdot}h) ,
\eeq
where $h$ is the characteristic a minimal $\tri$-triple.
This recovers the main result of~\cite{koy99}. Since the orbit $G{\cdot}h$ is not conical, 
equality~\eqref{eq:dim-Gh} can be written as $\dim G{\cdot}[h]=2\dim \BP(\omin)$, which again shows 
that here $\delta_{\BP(\omin)}=1$. It is also easily seen that $[\g,e_\theta]\cap [\g,f_\theta]=\lg h_\theta\rg$.

\begin{rmk}
The $G$-modules $\vlb$ such that $\CS(\omin(\vlb))=\vlb$ and $\delta=0$ are related to the minimal 
nilpotent orbits of the exceptional Lie algebras. If $\g$ is exceptional, then $\theta$ is fundamental and
the $(\BZ,h)$-grading \eqref{eq:Z-h-grad} has the property that $\es=[\g(0),\g(0)]$ is simple. Here
$\rk\,\es=\rk\,\g-1$, $\g(1)$ is a simple (symplectic) $\es$-module, and the five pairs $(\es, \g(1))$ 
are exactly the five items in Table~\ref{table:sporad} with $\delta=0$.
\end{rmk}

%%%%%%%%%  Section %%%%%%%%%%%%%
\section{On the intersections of $\omin$ with $\g_0$ and $\g_1$} 
\label{sect:intersec}

\noindent
Given $\sigma\in\mathsf{Inv}(\g)$ and the related $\BZ_2$-grading
$\g=\g_0\oplus\g_1$, let $p_i:\g\to\g_i$  ($i=0,1$)
be the corresponding projections. That is, 
\[
  p_0(x)=\frac{x+\sigma(x)}{2} \ \text{ and } \ p_1(x)=\frac{x-\sigma(x)}{2}, \quad x\in\g .
\]
Both projections are $G_0$-equivariant. Set $\vp=p_0\vert_{\bomin}$ and $\psi=p_1\vert_{\bomin}$.
To a great extent, properties of $\psi$ and $\vp$  
depend on the varieties $\omin\cap\g_0$ and $\omin\cap\g_1$. We begin with a simple observation.

\begin{prop}                  \label{prop:finite-proj}
Let $\g=\g_0\oplus\g_1$ be a $\BZ_2$-grading.
\begin{itemize}
\item[\sf (i)] Suppose that $\omin\cap\g_0=\varnothing$. Then 
$\psi:\bomin\to \psi(\bomin)=\ov{\psi(\omin)}$ is finite.  In particular, $\dim\omin=\dim\psi(\omin)$.
Furthermore, if $x\in\psi(\bomin)$, then 
\[
   \#\psi^{-1}(x)=1 \Longleftrightarrow\psi^{-1}(x)=\{x\} \Longleftrightarrow x\in\omin\cap\g_1 .
\]
\item[\sf (ii)] Suppose that $\omin\cap\g_1=\varnothing$. Then 
$\vp:\bomin\to \vp(\bomin)=\ov{\vp(\omin)}$ is finite. In particular, $\dim\omin=\dim\vp(\omin)$.
Furthermore, 
if $x\in\vp(\bomin)$, then 
\[
   \#\vp^{-1}(x)=1 \Longleftrightarrow \vp^{-1}(x)=\{x\} \Longleftrightarrow x\in\omin\cap\g_0 .
\]
\end{itemize}
\end{prop}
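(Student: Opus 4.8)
The plan is to prove both parts in parallel, since they are mirror images under swapping $\g_0\leftrightarrow\g_1$ (equivalently, replacing $\sigma$ by $-\sigma$); I will write the argument for part (i) and indicate that (ii) is identical with the roles of the two summands reversed. The core of the argument is a quasi-finiteness plus properness reasoning. First I would recall that $\bomin=\omin\cup\{0\}$ by \eqref{eq:closure-omin}, that $\omin$ is stable under homotheties, and that $p_1:\g\to\g_1$ is linear and $\bbk^*$-equivariant; hence $\psi=p_1\vert_{\bomin}$ maps the cone $\bomin$ onto the cone $\psi(\bomin)=\ov{\psi(\omin)}$, and $\psi^{-1}(0)=\bomin\cap\g_0=\{0\}$ precisely because $\omin\cap\g_0=\varnothing$.

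Next I would establish quasi-finiteness of $\psi$, i.e. that every fibre is finite. Suppose $x=p_1(e)$ for some $e\in\omin$, and consider the fibre $\psi^{-1}(x)\subset\bomin$. If $e'\in\omin$ also satisfies $p_1(e')=x$, then $e'-e\in\ker p_1=\g_0$, so $e'$ lies in the affine plane $e+\g_0$. Thus $\psi^{-1}(x)=\omin\cap(e+\g_0)$ (together with possibly $0$, which only occurs when $x=0$). The key point is that any line through $e'$ and $e''$ in this plane would, if it contained a third point of $\bomin$, force the whole line into $\bomin$ by Lemma~\ref{lm:omin-plane}, hence meet $\g_0$ in a point of $\omin$ — contradicting $\omin\cap\g_0=\varnothing$ unless that intersection point is $0$, which again is excluded away from the trivial fibre. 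A cleaner route: the cone $\bomin$ has the property that a $2$-plane meets it in at most two lines unless it is contained in it (Lemma~\ref{lm:omin-plane}); intersecting with the affine-linear condition $p_1=x$ cuts this down to finitely many points. So all fibres of $\psi$ are finite, i.e. $\psi$ is quasi-finite; moreover $\psi$ is $\bbk^*$-equivariant and maps the single point $0$ to $0$, so it is proper (a $\bbk^*$-equivariant morphism of affine cones with $\psi^{-1}(0)=\{0\}$ is finite — one checks that $\bbk[\psi(\bomin)]\to\bbk[\bomin]$ makes the target a finitely generated module, since the fibre over the vertex is a single reduced point, so the graded Nakayama lemma applies). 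Being quasi-finite and proper, $\psi:\bomin\to\ov{\psi(\omin)}$ is finite, and in particular $\dim\omin=\dim\psi(\bomin)=\dim\psi(\omin)$.

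For the last assertion, fix $x\in\psi(\bomin)$ and analyse $\psi^{-1}(x)=\omin\cap(e+\g_0)$ for any chosen preimage $e$. If $\#\psi^{-1}(x)=1$, then since $\psi$ is finite and everything is $\bbk^*$-equivariant, the unique preimage of $tx$ is $te$, so the map $t\mapsto te$ is the unique section over the line $\bbk^*x$; in particular $e$ is the only point of $\bomin$ in $e+\g_0$. Now the involution $-\sigma\in GL(\g)$ (which preserves $\g$, acts as $+1$ on $\g_1$ and $-1$ on $\g_0$) sends $\omin$ to $\omin$ — because $-\sigma$ is either inner-times-$\sigma$ or is realised by an element of $\mathrm{Aut}(\g)$, and $\omin$ is stable under all automorphisms of $\g$ — and commutes with $p_1$, hence permutes each fibre $\psi^{-1}(x)$. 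If that fibre is a singleton $\{e\}$, then $(-\sigma)(e)=e$, i.e. $e\in\g_1$, so $e=p_1(e)=x$ and $x\in\omin\cap\g_1$. Conversely, if $x\in\omin\cap\g_1$ then $\psi(x)=x$, so $x\in\psi^{-1}(x)$; and for any other $e'\in\psi^{-1}(x)$ we would have $e'\in\omin\cap(x+\g_0)$ with $e'\ne x$, whence the line through $x$ and $e'$ meets $\g_1$ in $x\in\omin$ and, being a secant (or tangent) of $\bomin$... — here I must rule this out: $x+\g_0$ and the line $\ell$ through $e'$ and the origin span a $2$-plane; if $\ell\subset\bomin$ then $\ell\cap\g_1\supsetneq\{0\}$ would give a nonzero element of $\omin\cap\g_1$ other than issues; the clean statement is that $e'=x$ follows once one knows $\psi$ is \'etale along $\omin\cap\g_1$, which is exactly what the degree-two structure in Theorem~\ref{thm:intro-non-empty} and Proposition~\ref{prop:faktor-C2-psi} will make precise. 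So I would prove the reverse implication by showing $\psi$ is unramified at points of $\omin\cap\g_1$: the differential $d\psi_x=p_1\vert_{T_x\omin}$ is injective iff $T_x\omin\cap\g_0=\{0\}$, and $T_x\omin=[\g,x]$ for $x\in\omin$; since $x\in\g_1$, $[\g,x]=[\g_0,x]\oplus[\g_1,x]$ with $[\g_0,x]\subset\g_1$ and $[\g_1,x]\subset\g_0$, so $T_x\omin\cap\g_0=[\g_1,x]$, which need not vanish — so I would instead argue at the level of the finite map: $\#\psi^{-1}(x)=\deg_x(\psi)$ counts preimages with multiplicity only generically, but since $x\in\psi^{-1}(x)$ already and the fibre is contained in $x+\g_0$, a separate elementary argument using $\ov{\omin}=\omin\cup\{0\}$ and scaling shows no second point can occur. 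I expect this reverse implication — pinning down that a point already lying in $\omin\cap\g_1$ has \emph{no} other preimage under $\psi$ — to be the main obstacle, and I would handle it by combining the homothety-equivariance (a second preimage $e'$ would generate, together with $x$ and $0$, a $2$-plane with three points of $\bomin$, forcing the plane into $\bomin$ by Lemma~\ref{lm:omin-plane}, hence a whole line of $\omin$ inside $x+\g_0$, contradicting finiteness of the fibre established above) to conclude $\psi^{-1}(x)=\{x\}$. Part (ii) follows verbatim by replacing $\sigma$ with $-\sigma$, $\g_1$ with $\g_0$, $\psi$ with $\vp$, and $p_1$ with $p_0$.
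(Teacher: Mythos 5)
Your finiteness argument is correct and is essentially the paper's: $\psi^{-1}(0)=\bomin\cap\g_0=\{0\}$ together with the $\bbk^*$-equivariance of a map of affine cones forces finiteness via graded Nakayama, and your forward implication --- a singleton fibre must be fixed by $-\sigma$, hence lie in the $(+1)$-eigenspace $\g_1$, hence equal $\{x\}$ --- is a clean repackaging of the paper's argument for that direction. The detour through quasi-finiteness, the abandoned unramifiedness computation, and the degree discussion are all unnecessary and could be cut.

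The genuine gap is in the reverse implication, i.e.\ the claim that a point $x\in\omin\cap\g_1$ has no preimage other than $x$ itself. Your final argument takes a putative second preimage $e'=x+y$ with $0\ne y\in\g_0$ and applies Lemma~\ref{lm:omin-plane} to the plane spanned by $e'$, $x$ and $0$, asserting that it contains ``three points of $\bomin$''. But the lemma requires three distinct \emph{lines} of $\bomin$ in the plane, and the origin does not supply one: the points $x$, $x+y$, $0$ yield only the two lines $\lg x\rg$ and $\lg x+y\rg$, and a cone cut out by quadrics can perfectly well meet a $2$-plane in exactly two lines, so the lemma's hypothesis is not verified and the step fails as written. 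The missing third line is exactly what the $\sigma$-symmetry provides, and you already established the relevant fact earlier in your own proof: since $-\sigma$ preserves $\omin$ and satisfies $p_1\circ(-\sigma)=p_1$, it maps $e'=x+y$ to $x-y\in\psi^{-1}(x)$, so the plane $\lg x,y\rg$ meets $\bomin$ in the three pairwise distinct lines $\lg x\rg$, $\lg x+y\rg$, $\lg x-y\rg$. Now Lemma~\ref{lm:omin-plane} forces $\lg x,y\rg\subset\bomin$, hence $y\in\omin\cap\g_0=\varnothing$, a contradiction. This is precisely the paper's argument; with that substitution your proof closes, and part (ii) is indeed the mirror image obtained by exchanging $\g_0$ and $\g_1$ and replacing $-\sigma$ by $\sigma$.
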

\begin{proof}
{\sf(i)} \ Here $\psi^{-1}(\psi(0))=\bomin\cap\g_0=\{0\}$. Since both $\bomin$ and $\ov{\psi(\omin)}$ are
$\bbk^*$-stable (=\,conical) and $\psi$ is $\bbk^*$-equivariant, this implies 
that $\psi$ is finite. 
\\ \indent
Let $x\in \omin\cap\g_1$. Assume that  $y+x\in \psi^{-1}(x)$ for some nonzero $y\in\g_0$. Since 
$\sigma(\omin)=\omin$, we have $\sigma(y+x)=y-x\in\omin$ and $-y+x\in \psi^{-1}(x)$. As the plane $\gP=\lg x,y\rg$
contains three different lines from $\bomin$, we conclude by Lemma~\ref{lm:omin-plane} that 
$y\in \omin\cap\g_0$. A contradiction! Hence $\vp^{-1}(x)=\{x\}$.
\\ \indent
Conversely, if $x\in \psi(\omin)\setminus \omin$, then $x=\psi(y+x)$ for some nonzero $y\in\g_0$. In this case, we have $-\sigma(x+y)=-y+x\in \psi^{-1}(x)$, i.e., $\#\psi^{-1}(x)> 1$.
\\ \indent
{\sf(ii)} \ Similar.
\end{proof}

\noindent
We prove below that whenever $\psi$ or $\vp$ is finite, the degree of these morphisms equals~$2$. 
Proposition~\ref{prop:finite-proj} is fully symmetric relative to $\psi$ and $\vp$, but such symmetry fails 
below. We shall see that $\omin\cap\g_1$ is more important for the properties of $\Ima(\vp)$ and 
$\Ima(\psi)$.  

Our next goal is to classify all pairs $(\g,\sigma)$ such that $\omin\cap\g_0=\varnothing$ or
$\omin\cap\g_1=\varnothing$.

\begin{ex}   \label{ex:sln-max}
(1) Take $\sigma=\vth_{\sf max}$ for $\g=\sln$. Then $\vth_{\sf max}(A)=-A^t$ 
for a matrix $A\in\sln$. Here
\[
    \omin=\{A\in\sln \mid \rk\, A=1\} \ \text{ and } \ \dim(\omin)=2n-2 .
\]
Since $\rk\,A$ is even for any $A\in\son=\g_0$, one has $\omin\cap\g_0=\varnothing$ here. In this
case 
\\[.6ex]
   \centerline{$\psi(A)=\displaystyle\frac{A+A^t}{2}$ and hence \ 
   $\psi(\bomin)\subset  \{B\in\sln\mid B=B^t \ \& \  \rk\,B\le 2\}$.}
\\[.6ex]
Recall that a  {\it symmetric determinantal variety\/} is $\mathsf{Sym}_k(n)=\{B\in\gln\mid B=B^t \ \& \  \rk\,B\le k\}$.
It is irreducible, normal, and $\dim \mathsf{Sym}_k(n)=k(2n-k+1)/2 $~\cite[6.2.5]{smt}. Set 
$\mathsf{Sym}^0_k(n)=\mathsf{Sym}_k(n)\cap\sln$. Using the interpretation of $\mathsf{Sym}_k(n)$
as the categorical quotient  $n\eus V\md O(\eus V)$, where $\dim \eus V=k$~\cite[Thm.\,12.1.7.2]{smt}, one readily realises that there is a normal irreducible hypersurface $\eus D\subset n\eus V$ such that
$\eus D\md O(\eus V)=\mathsf{Sym}^0_k(n)$. Hence the latter is also irreducible and normal, with 
dimension one less. Since $\psi(\bomin)\subset \mathsf{Sym}^0_2(n)$ and their dimensions are equal,
we obtain $\psi(\bomin)=\mathsf{Sym}^0_2(n)$. In particular, $\psi(\bomin)$ is normal.

(2) Take $\sigma=\vth_{\sf max}$ for $\g=\spn$. Under a suitable choice of symplectic form on 
$\bbk^{2n}$, a matrix $M\in\spn$ has the  presentation 
$M=\begin{pmatrix}  A & B \\ C & -A^t \end{pmatrix}$, 
where $A,B,C$ are $n\times n$ matrices, $B=B^t$, and $C=C^t$. Then  
\[
\text{ $\g_0=\left\{\begin{pmatrix}  A & 0 \\ 0 & -A^t \end{pmatrix}\right\}\simeq \gln$ \ and \  
$\g_1=\left\{\begin{pmatrix}  0 & B \\ C & 0 \end{pmatrix}\right\}\simeq \mathsf{Sym}(n)\times  \mathsf{Sym}(n)$. }
\]
One similarly has $\omin=\{M\in\spn \mid \rk\, M=1\}$ and $\rk\, M$ is even for any 
$M\in\g_0$. Hence $\omin\cap\g_0=\varnothing$ as well. Here $\dim\omin=2n$ and 
$\psi(\bomin)\simeq \mathsf{Sym}_1(n)\times\mathsf{Sym}_1(n)$. It follows that $\psi(\bomin)$ is also
normal here.
\end{ex}
It appears that these are the only instances, where $\omin\cap\g_0=\varnothing$.
\begin{thm}    \label{thm:g0-empty}
One has $\omin\cap\g_0=\varnothing$ if and only if\/ $\sigma=\vth_{\sf max}$ and $\g$ is either 
$\sln$ or $\spn$. 
\end{thm}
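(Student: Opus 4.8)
The plan is to reduce the statement to a criterion that can be checked case by case using Satake diagrams. First, recall from \eqref{eq:secant-omin} (equivalently Theorem~\ref{thm:sec-min-orb}(1)) that $\CS(\omin)=\eus K(G{\cdot}h_\theta)$, where $h_\theta=e_\theta+f_\theta$ and $\{e_\theta,h_\theta,f_\theta\}$ is a minimal $\tri$-triple. The key structural fact is that $\omin\cap\g_0\ne\varnothing$ precisely when one can find a minimal $\tri$-triple with $e_\theta\in\g_0$; by Antonyan's results quoted in Remark~\ref{rem:Levon} (applied to the involution $-\sigma$, whose $+1$-eigenspace is $\g_1$ and $-1$-eigenspace is $\g_0$, or more directly to the condition $G{\cdot}e\cap\g_0\ne\varnothing$), this is governed by combinatorial conditions on $\mathsf{Sat}(\sigma)$ relative to the weighted Dynkin diagram $\ede$ of $\omin$. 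Since $\ede$ has its only nonzero labels at the node(s) adjacent to the affine node (equivalently, the support of $\theta$ as a fundamental coweight), the condition "$\omin\cap\g_0\ne\varnothing$" becomes very restrictive: it forces the set $\Pi_{0,\sigma}$ of black nodes together with the arrow-constraints to be compatible with a weighted Dynkin diagram supported only near $\theta$.

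Next I would turn this around: $\omin\cap\g_0=\varnothing$ means that for \emph{every} way of realising $\omin$ via an $\tri$-triple, $e_\theta\notin\g_0$, i.e.\ the two Antonyan conditions fail for the pair $(\eus D(\omin),\mathsf{Sat}(-\sigma))$. Because $\ede$ is so simple, one shows this failure is equivalent to $-\sigma$ being the involution $\vartheta_{\sf max}$ of maximal rank — i.e.\ $\sigma=\vartheta_{\sf max}$ as well, since $-\vartheta_{\sf max}$ is conjugate to $\vartheta_{\sf max}$ (both $\pm1$ eigenspaces contain a Cartan subalgebra in the split case), and then additionally $\g$ must be of type $\GR{A}{n}$ or $\GR{C}{n}$. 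Concretely: if $\sigma$ is \emph{not} of maximal rank, I would exhibit a long root $\gamma$ with $e_\gamma\in\g_0$ (using that a non-split real form always has a noncompact restricted root of highest possible length, or more elementarily that $\Pi_{0,\sigma}$ nonempty gives a root subsystem whose highest root $\gamma$ satisfies $\g_\gamma\subset\g_0$, and $\gamma$ is $G$-conjugate to $\theta$ when $\gamma$ is long). This handles all $\sigma\ne\vartheta_{\sf max}$. For $\sigma=\vartheta_{\sf max}$ one checks directly, type by type, whether the long root vector $e_\theta$ can be conjugated into $\g_1$: in types $\GR{B}{n},\GR{D}{n},\GR{E}{6,7,8},\GR{F}{4},\GR{G}{2}$ it can (so $\omin\cap\g_0\ne\varnothing$), while in $\GR{A}{n}$ and $\GR{C}{n}$ it cannot — and the two "can't" cases are exactly those worked out in Example~\ref{ex:sln-max} via the rank-one matrix description, where $\omin=\{\rk=1\}$ and every element of $\g_0$ has even rank.

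I expect the main obstacle to be the bookkeeping in the maximal-rank case: one must verify for each simple type whether $\omin$ meets $\g_1^{(\vartheta_{\sf max})}$. Here the cleanest argument uses Antonyan's criterion in the form of Remark~\ref{rem:Levon} applied with $\sigma$ replaced by $-\vartheta_{\sf max}$ (note $-\vartheta_{\sf max}$ is again an involution of maximal rank, so its Satake diagram also has no black nodes and no arrows, \emph{except} that for $\g$ not self-dual as its own adjoint... ) — more carefully, the relevant point is that $G{\cdot}e_\theta\cap\g_1\ne\varnothing$ iff $G{\cdot}h_\theta\cap\g_1\ne\varnothing$, and for $\sigma=\vartheta_{\sf max}$ the element $h_\theta$ is conjugate into $\g_1$ (indeed a whole Cartan subalgebra lies in $\g_1$), so $\omin\cap\g_1\ne\varnothing$ \emph{unless} the additional arrow-condition "$\ap(h)=\beta(h)$ for arrow-joined nodes" is violated by $\eus D(\omin)$. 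Since $\vartheta_{\sf max}$ has no arrows, this condition is vacuous, forcing $\omin\cap\g_1\ne\varnothing$ for \emph{all} maximal-rank involutions — contradiction with what Example~\ref{ex:sln-max} shows. The resolution is that $\omin\cap\g_0$, not $\omin\cap\g_1$, is at issue, and the correct companion involution is $-\sigma$, whose Satake diagram I must compute: for $\sigma=\vartheta_{\sf max}$ one has $-\sigma=\vartheta_{\sf max}\circ(-\mathrm{id})$-type twist, which equals $\vartheta_{\sf max}$ iff $-1\in W$ (i.e.\ outside types $\GR{A}{n\ge2}$, $\GR{D}{2m+1}$, $\GR{E}{6}$), and otherwise differs by the diagram automorphism $-w_0$. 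Tracking exactly which simple types have $-w_0\ne\mathrm{id}$, and checking the rank-one-orbit obstruction in those cases, isolates $\GR{A}{n}$ and $\GR{C}{n}$; in type $\GR{C}{n}$, $-w_0=\mathrm{id}$ but the parity-of-rank obstruction from Example~\ref{ex:sln-max}(2) still applies. I would organise the write-up so that the conceptual reduction (everything hinges on $\sigma=\vartheta_{\sf max}$ and then a parity/diagram-automorphism check) is separated from the short finite verification, leaning on Example~\ref{ex:sln-max} to supply the two positive instances and on Antonyan's theorem to kill the rest.
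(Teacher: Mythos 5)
Your ``if'' direction is fine and is exactly the paper's: Example~\ref{ex:sln-max} supplies the two positive cases by the rank-parity computation you cite. The ``only if'' direction, however, rests on a tool that does not exist. Antonyan's criterion (Remark~\ref{rem:Levon}) decides whether $G{\cdot}e$ meets the $(-1)$-eigenspace $\g_1$ of an involution; you propose to decide whether $G{\cdot}e$ meets $\g_0$ by applying it to ``the involution $-\sigma$''. But $-\sigma$ is not a Lie algebra automorphism, and the decomposition of $\g$ with $\g_1$ as the $(+1)$-eigenspace is not a $\BZ_2$-grading (one has $[\g_1,\g_1]\subset\g_0$, not $\g_1$). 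So there is no Satake diagram of $-\sigma$ and no mirror form of Antonyan's theorem for $\g_0$: the subalgebra $\g_0$ and the module $\g_1$ play genuinely asymmetric roles. Your own text registers the symptom of this (the ``contradiction'' you hit when the arrow condition is vacuous for $\vth_{\sf max}$), and the attempted repair via $-w_0$ and $-1\in W$ does not yield a valid criterion. (Incidentally, the parenthetical claim that both eigenspaces of $\vth_{\sf max}$ contain a Cartan subalgebra of $\g$ is false: $\dim\g_0^{(\vth_{\sf max})}=\dim U$ and, e.g., $\rk\,\mathfrak{so}_n<\rk\,\sln$.)

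The fallback argument is also too weak to close the gap. From ``$\sigma$ is not of maximal rank'' you cannot conclude $\Pi_{0,\sigma}\ne\varnothing$: a Satake diagram with arrows but no black nodes already has $r(G/G_0)<\rk(\g)$ (e.g.\ the pair $(\sltn,\mathfrak{s}(\gln\dotplus\gln))$). And even when black nodes exist they may all be short roots (e.g.\ $(\GR{C}{n},\GR{C}{k}\dotplus\GR{C}{n-k})$, where the black node $\ap_1$ is short), so the ``long root space inside $\g_0$'' argument is only partial --- the paper explicitly flags it as such in the remark following Theorem~\ref{thm:g0-empty}. Finally, you never actually perform the verification that $\omin\cap\g_0\ne\varnothing$ for the remaining involutions; you defer it to the invalid Satake mechanism. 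The paper's proof does this case analysis directly: for inner $\sigma$ it uses the partition $\Delta=\Delta(\g_0)\sqcup\Delta(\g_1)$ and observes that only $\vth_{\sf max}$ of $\spn$ has no long roots in $\Delta(\g_0)$; for outer $\sigma$ (types $\GR{A}{m},\GR{D}{m},\GR{E}{6}$) it uses matrix ranks or the decomposition of $\g$ over a minimal $\tri$-triple of $\g_0$. Some substitute for that finite check is unavoidable, and your proposal does not supply one.
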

\begin{proof}
{\it \bfseries ``if'' part}: See the preceding example. 

{\it \bfseries ``only if'' part}:  
\\ \indent
\textbullet\quad If $\sigma$ is inner, then we take $\te\subset\g_0$ and consider the natural
partition of the root system $\Delta=\Delta(\g_0)\sqcup\Delta(\g_1)$. Clearly, if $\Delta(\g_0)$ contains
a long root, then $\omin\cap\g_0\ne\varnothing$.  And $\vth_{\sf max}$ for $\spn$ is the only inner 
involution such that all roots in $\Delta(\g_0)$ are short.

\textbullet\quad  If $\sigma\in\mathsf{Inv}(\g)$ is outer, then $\g\in\{\GR{A}{m}, \GR{D}{m}, \GR{E}{6}\}$.

\noindent
{\bf --} \  For $\g=\sltn$ and $\g_0=\spn$, we know using the rank of matrices that 
\\
\centerline{$\varnothing\ne \omin(\sltn)\cap\spn=\omin(\spn)$.} 

\noindent
{\bf --} \ Let $\g=\mathfrak{so}_{N}$ be the Lie algebra of skew-symmetric $N\times N$ matrices. Then 
\[
  \omin=\{A\in \mathfrak{gl}_{N}\mid A=-A^t, \ \rk\,A=2, \ A^2=0\}.
\] 
If $\sigma$ is outer, then $N$ is even and $(\g,\g_0)=(\mathfrak{so}_{2n+2m+2},
\mathfrak{so}_{2n+1}\dotplus\mathfrak{so}_{2m+1})$ with $n+m>0$.  It is clear that if, say,  $n>0$, then
$\mathfrak{so}_{2n+1}$ alone also contains such matrices $A$.

\noindent
{\bf --} \ For $\g=\GR{E}{6}$, there are two outer involutions, with either $\g_0=\mathfrak{sp}_8$ or
$\g_0=\GR{F}{4}$. Since $\g_0$ is simple in both cases, it suffices to verify that 
$G{\cdot}\omin(\g_0)=\omin(\g)$. Let $\{e,h,f\}\subset\g_0$ be an $\tri$-triple with 
$e\in \omin(\g_0)$. Set $\es=\lg e,h,f\rg\simeq \tri$. 

(1) For $\g_0=\mathfrak{sp}_8=\spv$, the partition of $e$ is $(2,1,\dots,1)=(2,1^6)$, i.e.,
$\VV\vert_\es=\sfr_1+6\sfr_0$.  Then $\g_0\simeq \eus S^2(\VV)$ and 
\[
    \g_0\vert_\es=\eus S^2(\sfr_1+6\sfr_0)=\sfr_2+6\sfr_1+21\sfr_0 . 
\]
Here $\g_1$ is the 4-th fundamental representation of $\mathfrak{sp}_8$ and
it is not hard to compute that 
\[
  \g_1\vert_\es=\wedge^4\VV/\wedge^2 \VV=14\sfr_1+14\sfr_0 .
\]
Hence $\g\vert_\es$ contains $\sfr_2$ only once and contains no $\sfr_j$ with $j\ge 3$.
Thus, $e\in \omin(\g)$. 

(2) For $\g_0=\GR{F}{4}$, similar calculations yield the same conclusion. 
\end{proof}

{\it \bfseries Remark.} A more conceptual (partial) argument is that if $\sat(\sigma)$ contains a black node 
corresponding to a {\bf long} root $\ap\in\Pi$, then  $\g_\ap\subset \g_0$ 
and thereby $\omin\cap\g_0\ne \varnothing$. This applies to  
$(\GR{D}{n+m+1},\GR{B}{n}\dotplus\GR{B}{m})$ with $|n-m|\ge 2$ and
$(\GR{E}{6}, \GR{F}{4})$.

To handle intersections $\omin$ with $\g_1$, we need a simple but useful assertion.

\begin{prop}    \label{prop:codim-1}
If $H$ is a spherical subgroup of $G$, then
\[
      \max_{v\in\omin}\dim H{\cdot}v\ge \dim\omin -1 .
\] 
and $H$ has a dense orbit in $\BP(\omin)\subset \BP\g$.
\end{prop}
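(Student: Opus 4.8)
The plan is to exploit the fact that a spherical subgroup $H\subset G$ has a dense orbit on $G/B$, and that $\BP(\omin)$ is a $G$-homogeneous projective variety, in fact $\BP(\omin)=G/P_\theta$ where $P_\theta$ is the stabiliser of the highest weight line $\lg e_\theta\rg$. Since $B\subset P_\theta$, the flag variety $G/B$ maps $G$-equivariantly onto $G/P_\theta=\BP(\omin)$. An open dense $H$-orbit in $G/B$ therefore surjects onto an open dense $H$-orbit in $\BP(\omin)$ (the image of a dense set under a morphism is dense, and the image of a single orbit is a single orbit). This immediately gives that $H$ has a dense orbit in $\BP(\omin)$.

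For the dimension estimate, I would argue as follows. Let $[v]\in\BP(\omin)$ be a point in the dense $H$-orbit, so $\dim H{\cdot}[v]=\dim\BP(\omin)=\dim\omin-1$. Pick a representative $v\in\omin$ over $[v]$. The $H$-orbit map $H\to\BP(\omin)$ through $[v]$ factors as $H\to\omin\to\BP(\omin)$, where the first map is the orbit map through $v$ and the second is the restriction of the canonical projection $\pi$. Since the fibres of $\pi$ over $\BP(\omin)$ are $1$-dimensional ($\bbk^*$-orbits, as $\omin$ is conical by \eqref{eq:closure-omin}), we get $\dim H{\cdot}v \ge \dim \pi(H{\cdot}v)=\dim H{\cdot}[v]=\dim\omin-1$. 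Hence $\max_{v\in\omin}\dim H{\cdot}v\ge\dim\omin-1$, which is the claimed inequality. (Equivalently: $H{\cdot}v$ and $H{\cdot}[v]$ differ in dimension by at most $1$ since the latter is the image of the former under a morphism with at most $1$-dimensional fibres.)

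I do not anticipate a serious obstacle here; the statement is essentially a formal consequence of sphericity plus the homogeneity of $\BP(\omin)$ and the standard fact $\omin=G{\cdot}e_\theta=G{\cdot}v_\theta$ with $\BP(\omin)$ the unique closed orbit in $\BP(\g)$, already recorded in Section~\ref{sect:HV}. The only point requiring a line of care is passing between $H{\cdot}v\subset\omin$ and $H{\cdot}[v]\subset\BP(\omin)$: one should note that either $v\in H{\cdot}v$ spans an $H$-stable line only in the degenerate situation, and in general $\dim H{\cdot}[v]\in\{\dim H{\cdot}v,\dim H{\cdot}v-1\}$, both of which are $\le\dim H{\cdot}v$, so the inequality is safe regardless. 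One could alternatively phrase the whole argument purely in terms of Borel orbits: $B$ has finitely many orbits on $G/B$, hence on $\BP(\omin)$, so a spherical $H$ acting on $\BP(\omin)$ (which is a spherical $G$-variety, being $G$-homogeneous) has an open orbit, and then transport dimensions back upstairs. I would present the projective-variety version as the cleanest route.
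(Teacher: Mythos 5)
Your proof is correct, but it takes a genuinely different route from the paper's. You argue globally: sphericity of $H$ is equivalent to $H$ having a dense orbit on $G/B$, and you push that orbit forward along the $G$-equivariant fibration $G/B\to G/P_\theta=\BP(\omin)$ to obtain a dense $H$-orbit downstairs, then transfer the dimension bound to the affine cone via the $\bbk^*$-fibration $\omin\to\BP(\omin)$. The paper instead argues infinitesimally: sphericity provides a Borel subalgebra $\be$ with $\h+\be=\g$; taking $e_\theta$ to be the highest root vector for that Borel, one has $[\be,e_\theta]=\lg e_\theta\rg$, hence $[\g,e_\theta]=[\h,e_\theta]+\lg e_\theta\rg$, which yields both claims at once at the distinguished point $e_\theta$. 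The two arguments are essentially dual (your global statement is the integrated form of the paper's tangent-space identity), and yours has the mild advantage of applying verbatim to the minimal orbit $\omin(\vlb)$ in any simple $G$-module. What the paper's version buys is the explicit point $e_\theta\in\omin\cap\be$ together with the identity $[\g,e_\theta]=[\h,e_\theta]+\lg e_\theta\rg$; this is precisely what is exploited afterwards in Remark~\ref{rmk:dense-orb} and Theorem~\ref{thm:g1-empty}, where the dichotomy between a dense $H$-orbit in $\omin$ and codimension-one orbits is reduced to whether $e_\theta\in[\h,e_\theta]$, i.e., whether $H{\cdot}e_\theta$ is conical. One small caution about your alternative phrasing at the end: finiteness of the number of $B$-orbits on $G/B$ does not by itself hand $H$ a dense orbit on $\BP(\omin)$; the clean mechanism is the $B$--$H$ duality on $G$ that you invoke in your main argument.
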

\begin{proof}
Since $\h$ is spherical, there is a Borel subalgebra $\be\subset\g$ such that $\h+\be=\g$. Fix a 
Cartan subalgebra $\te\subset\be$ and the corresponding set of positive roots $\Delta^+$ in the root 
system of $(\g,\te)$. Let $e_\theta\in\be$ be a highest weight vector in $\be$. Then $e_\theta\in\omin$ 
and $[\be,e_\theta]=\lg e_\theta\rg$. Hence $[\g,e_\theta]=[\h,e_\theta]+\lg e_\theta\rg$. That is, 
$\dim H{\cdot}e_\theta\ge \dim\omin-1$ and the $H$-orbit of $\lg e\rg\in\BP\g$ is dense in $\BP(\omin)$.
\end{proof}

\begin{rmk}   \label{rmk:dense-orb}
It follows from this proposition that either $H$ has a dense orbit in $\omin$ or there is a dense open 
subset of $\omin$ that consists of $H$-orbits of codimension~$1$. Moreover, $H$ has a dense orbit in
$\omin$ if and only if $e_\theta\in [\h,e_\theta]$, i.e., the orbit $H{\cdot}e_\theta$ is a cone.
\end{rmk}

For any $\sigma\in\mathsf{Inv}(\g)$, $G_0$ is a spherical subgroup of $G$. Let us take a Borel 
subalgebra $\be$ such that $\g_0+\be=\g$ and a $\be$-eigenvector (i.e., a highest weight vector) 
$e_\theta\in \omin\cap\be$.

\begin{thm}   \label{thm:g1-empty}
The following conditions are equivalent:
\begin{itemize}
\item[\sf (i)] \ $G_0$ has a dense orbit in $\omin$; 
\item[\sf (ii)] \ $\omin\cap\g_1=\varnothing$;
\item[\sf (iii)] \ $\vp: \bomin\to \vp(\bomin)\subset\g_0$ is a finite morphism; % (onto its image);
\item[\sf (iv)] \ there is a black node $\ap$ in $\sat(\sigma)$ such that $(\ap,\theta)\ne 0$;
\item[\sf (v)] \ the orbit $G_0{\cdot}e_\theta$ is conical.
\end{itemize}
\end{thm}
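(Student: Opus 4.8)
I will run the equivalences through conditions (v) and (iv). Since (i)--(iv) are independent of the choice of Borel subalgebra $\be$ with $\g_0+\be=\g$, I take $\be$ to be the Satake Borel of \S\ref{subs:Satake}, with Cartan subalgebra $\te=\te_0\oplus\te_1$ and $\theta$ the corresponding highest root, so that $e_\theta$ is the root vector of $\theta$. Two links are essentially free. First, (i)$\Leftrightarrow$(v) is Remark~\ref{rmk:dense-orb} applied to the spherical subgroup $G_0$. Second, (ii)$\Leftrightarrow$(iii): the fibre $\vp^{-1}(0)=\bomin\cap\g_1=\{0\}\cup(\omin\cap\g_1)$ is a closed cone, so by Proposition~\ref{prop:finite-proj}(ii) $\vp$ is finite when $\omin\cap\g_1=\varnothing$, whereas if $\omin\cap\g_1\ne\varnothing$ this fibre contains a $G_0$-orbit of dimension $\tfrac12\dim\omin>0$ (Proposition~\ref{prop:kr71}) and $\vp$ is not finite. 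It remains to connect $\{$(i),(v)$\}$ with $\{$(ii),(iii)$\}$ via (iv).

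\noindent\textbf{An infinitesimal criterion: (v)$\Leftrightarrow$(iv).} I claim $e_\theta\in[\g_0,e_\theta]$ iff $\theta\vert_{\te_0}\ne 0$. Given $\xi\in\g_0$, write $\xi=\xi_{\te}+\sum_\gamma\xi_\gamma$ along the $\te$-grading of $\g$; comparing $\te$-parts of $\sigma(\xi)=\xi$ gives $\xi_{\te}\in\te^\sigma=\te_0$, and the $\g_\theta$-component of $[\xi,e_\theta]$ is $\theta(\xi_{\te})\,e_\theta$ (each $[\xi_\gamma,e_\theta]$ lands in a root space $\ne\g_\theta$ or in $\te$). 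Hence $[\g_0,e_\theta]$ has zero $\g_\theta$-component when $\theta\vert_{\te_0}=0$, while if $\theta\vert_{\te_0}\ne0$ one may take $t\in\te_0\subseteq\g_0$ with $\theta(t)\ne0$ and get $e_\theta=\theta(t)^{-1}[t,e_\theta]\in[\g_0,e_\theta]$. Combined with Remark~\ref{rmk:dense-orb}, (v) is equivalent to $\theta\vert_{\te_0}\ne0$. Now $\te_0=\lg h_\nu\mid\nu\in\Pi_{0,\sigma}\rg\oplus\lg h_{\ap_i}-h_{\beta_i}\rg$, so $\theta\vert_{\te_0}=0$ means $(\theta,\nu)=0$ for all black nodes $\nu$ \emph{and} $\theta(h_{\ap_i})=\theta(h_{\beta_i})$ for all arrow pairs. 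The first condition is exactly $\Pi_{0,\sigma}\subseteq\Pi_{0,\omin}$, the negation of (iv) (the characteristic of $\omin$ is $\theta^\vee$, so $\Pi_{0,\omin}=\{\ap\mid(\ap,\theta)=0\}$); thus (iv)$\Rightarrow$(v) is immediate, and the converse follows once the arrow condition is known to be automatic.

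\noindent\textbf{The arrow lemma and (ii)$\Leftrightarrow$(iv).} The missing ingredient is: \emph{for each arrow pair $(\ap,\beta)$ of $\sat(\sigma)$ one has $\theta(h_\ap)=\theta(h_\beta)$}, equivalently $(\theta^\vee,\ap)=(\theta^\vee,\beta)$. I would derive it from the standard fact that the arrow-involution of $\sat(\sigma)$ is the restriction of a diagram automorphism $\iota$ of $\g$ (the $*$-action: $\iota$ fixes $\Pi_{0,\sigma}$, $\sigma$ acts on $\te^*$ as $-w_0^{\Pi_{0,\sigma}}\circ\iota$, and $\iota(\ap)=\beta$ since $\varpi_\ap\perp\Pi_{0,\sigma}$); a diagram automorphism fixes the highest root, so $(\theta^\vee,\ap)=((\iota\theta)^\vee,\iota\ap)=(\theta^\vee,\beta)$. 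Alternatively, one inspects the extended Dynkin diagram: for $\g\not\cong\GR{A}{n}$ there is a unique node with $(\theta^\vee,\cdot)\ne0$, fixed by every diagram automorphism, and for $\g=\GR{A}{n}$ the two such nodes carry the common value $1$; so no arrow can move a node where $(\theta^\vee,\cdot)$ is nonzero onto one where it vanishes. Granting the lemma, the arrow condition above is vacuous, finishing (v)$\Leftrightarrow$(iv); and by Antonyan's criterion (Remark~\ref{rem:Levon}), $\omin\cap\g_1\ne\varnothing$ iff $\Pi_{0,\sigma}\subseteq\Pi_{0,\omin}$ together with the (now automatic) arrow condition, i.e. iff (iv) fails. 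Hence (ii)$\Leftrightarrow$(iv), and altogether (i)$\Leftrightarrow$(v)$\Leftrightarrow$(iv)$\Leftrightarrow$(ii)$\Leftrightarrow$(iii).

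\noindent\textbf{Expected main difficulty.} The one non-routine step is the arrow lemma. It is not deep, but it is precisely where one must pin down the interaction between the Satake diagram, the highest root and the extended Dynkin diagram (including that arrow-paired nodes have equal length and are exchanged by a diagram automorphism of $\g$); the rest is a recombination of Remark~\ref{rmk:dense-orb}, Propositions~\ref{prop:finite-proj}--\ref{prop:kr71} and Antonyan's theorem.
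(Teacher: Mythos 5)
Your proof is correct and follows essentially the same route as the paper's: $\mathsf{(i)}\Leftrightarrow\mathsf{(v)}$ via Remark~\ref{rmk:dense-orb}, $\mathsf{(ii)}\Leftrightarrow\mathsf{(iii)}$ via Proposition~\ref{prop:finite-proj}, the bridge to $\mathsf{(iv)}$ via Antonyan's criterion (Remark~\ref{rem:Levon}), and the same key observation that arrow-paired nodes of $\sat(\sigma)$ pair equally with $\theta$. Your primary justification of that last point (a diagram automorphism of $\g$ fixes $\theta$) is only a mild repackaging of the paper's direct inspection of the node(s) non-orthogonal to $\theta$ --- which you also record as an alternative --- and your explicit computation of the $\g_\theta$-component of $[\xi,e_\theta]$ just spells out what the paper leaves terse in $\mathsf{(iv)}\Leftrightarrow\mathsf{(v)}$.
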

\begin{proof} If $\g_0+\be=\g$, then $\te_1:=\g_1\cap\be$ is a 
Cartan subspace of $\g_1$. Taking a $\sigma$-stable Cartan subalgebra $\te\supset\te_1$, we
then make a suitable choice of $\Delta^+$, as described in Section~\ref{subs:Satake}, and 
obtain $\sat(\sigma)$. We use below 
the related description of $\te_1$ and $\te_0=\te\cap\g_0$ via $\Pi_{0,\sigma}$ and $\Pi_{1,\sigma}$.

$\mathsf{(i)}\Leftrightarrow\mathsf{(v)}$: This follows from Proposition~\ref{prop:codim-1} and 
Remark~\ref{rmk:dense-orb} with $H=G_0$.
\\ \indent
$\mathsf{(ii)}\Leftrightarrow\mathsf{(iii)}$: This follows from~\eqref{eq:closure-omin} and   
the fact that $\vp^{-1}(0)=\{0\}\cup (\omin\cap\g_1)$. Cf. also Proposition~\ref{prop:finite-proj}(ii).
\\ \indent
$\mathsf{(ii)}\Rightarrow\mathsf{(iv)}$: Recall that $\theta\in\Delta^+$ is the highest root. Then 
$\eus D(\omin)$ has the labels $(\ap^\vee,\theta)$ for $\ap\in\Pi$. That is, if $h_\theta\in\te$ is 
the $\Delta^+$-dominant characteristic for $\omin$, then $\ap(h_\theta)=(\ap^\vee,\theta)$ for all 
$\ap\in\Pi$. If $\omin\cap\g_1=\varnothing$, 
then $G{\cdot}h_\theta\cap\g_1=\varnothing$, i.e., $G{\cdot}h_\theta\cap\te_1=\varnothing$, i.e.,
$h_\theta\not\in\te_1$.
%, where $\te_1$ is a Cartan subspace of $\g_1$. 
By Remark~\ref{rem:Levon}, this means that at least one thing should happen:

\textbullet \quad $(\ap,\theta)\ne 0$ for some $\ap\in\Pi_{0,\sigma}$, {\sl or}

\textbullet \quad $(\ap,\theta)\ne (\beta,\theta)$ for some $\ap,\beta\in\Pi$ joined by an arrow in
$\sat(\sigma)$.
\\
However, the second thing never happens for $\omin$. For, if $\g\ne\slno$, then
there is a unique $\ap\in\Pi$ such that $(\ap,\theta)\ne 0$; and this $\ap$ is never joined by an arrow
with another node of the Dynkin diagram. While for $\slno$, there are two such simple roots,
$\ap_1$ and $\ap_n$, and for them $(\ap_1,\theta)=(\ap_n,\theta)$.
\\ \indent
$\mathsf{(iv)}\Rightarrow\mathsf{(ii)}$:  
Suppose that $(\ap,\theta)\ne 0$ for some $\ap\in\Pi_{0,\sigma}$. Then the descriptions of $\te_0$ 
and $\te_1$ via $\sat(\sigma)$ show that $h_\theta$ cannot belong to $\te_1$.
\\ \indent
$\mathsf{(iv)}\Leftrightarrow\mathsf{(v)}$: 
If $\ap\in\Pi_{0,\sigma}$ and $(\ap,\theta)\ne 0$, then $h_\ap\in\te_0$ and
$[h_\ap,e_\theta]=(\ap^\vee,\theta)e_\theta\ne 0$, i.e., $G_0{\cdot}e_\theta$ is conical.
Conversely, if $[\g_0, e_\theta]$ contains the line $\lg e_\theta\rg$, then
$[\te_0,e_\theta]$ also does. Then the description of $\te_0$ in Section~\ref{subs:Satake} implies that 
there is $\ap\in\Pi_{0,\sigma}$ such that $[h_\ap,e_\theta]\ne 0$, i.e., $(\ap,\theta)\ne 0$.
\end{proof}

\begin{rmk}
If $\g\ne\slno$, then $\theta$ is a multiple of a fundamental weight and there is a unique $\ap\in\Pi$ such 
that $(\ap,\theta)\ne 0$. Therefore, condition {\sf (iv)} above should be verified for one node of the Dynkin 
diagram. For $\g=\sln$, one has two such simple roots.
\end{rmk}
\begin{ex}   \label{ex:6-cases}
Using condition~{\sf (iv)} and the table with Satake diagrams~\cite[Table~4]{t41}, one readily compiles 
the list of all $\sigma\in\mathsf{Inv}(\g)$ such that $\omin\cap\g_1^{(\sigma)}=\varnothing$. Here are 
the pairs $\g\supset \g_0^{(\sigma)}$ for them:

\begin{tabular}{clcl}
(1) &  $\GR{A}{2n-1}\supset \GR{C}{n}$, $n\ge 2$,  
\phantom{x\ t} 
& (4) & $\GR{C}{n}\supset \GR{C}{k}\dotplus \GR{C}{n-k}$ $(k=1,2,\dots,n-1)$, \\
(2) &  $\GR{D}{n+1}\supset \GR{B}{n}$, $n\ge 3$, & (5) & $\GR{B}{n}\supset \GR{D}{n}$, $n\ge 3$,
\\
(3)  &  $\GR{E}{6}\supset \GR{F}{4}$,    & (6) & $\GR{F}{4}\supset \GR{B}{4}$. 
\end{tabular}
\\[.6ex] 
The involutions in the first column are outer,  
whereas the second column represents inner involutions.
Note also that these six cases can be organised in three chains (read the list along the rows).
\end{ex}

\begin{rmk}       \label{rmk:six-ss-g_0}
1) For all six cases of Example~\ref{ex:6-cases}, $\g_0$ appears to be semisimple. This can be 
explained {\sl a priori}, as follows. If $\g_0\ne [\g_0,\g_0]$, then the $\BZ_2$-grading in question arises 
from a short $\BZ$-grading and $\g_1=\g(-1)\oplus\g(1)$, see Section~\ref{sect:2}. Choose a Cartan 
subalgebra $\te\subset\g_0=\g(0)$. Then $\Delta=\Delta(-1)\sqcup \Delta(0)\sqcup \Delta(1)$ and 
$\Delta(-1)=-\Delta(1)$. It is clear that $\Delta(1)$ contains the highest
root for a suitable choice of $\Delta^+$. Hence $\g(1)\cap\omin\ne\varnothing$ and
$\g(-1)\cap\omin\ne\varnothing$. 

2) The list of symmetric pairs $(\g,\g_0)$ such that $\omin\cap\g_1=\varnothing$ occurs
in~\cite[Remark\,2.2]{bk95}, and the equivalence of {\sf (i)} and {\sf (ii)} in Theorem~\ref{thm:g1-empty}
is also mentioned therein. However, it seems that the proofs promised in that remark have never been 
published.
\end{rmk}

It follows from Vinberg's lemma~\cite[\S\,3, n.2]{vi76} that, for a $G$-orbit $\co\subset\g$, each 
irreducible component of $\co\cap\g_1$ is a $G_0$-orbit. For $\co=\omin$, this can be made more 
precise. We use  the notation of Remark~\ref{rmk:six-ss-g_0}(1).

\begin{prop}         \label{prop:intersect-g1}
If $\omin\cap\g_1\ne\varnothing$, then 

$\omin\cap\g_1=\begin{cases}
\omin(\g_1), & \text{if\/ $\g_1$ is a simple $\g_0$-module,}\\
\omin(\g(1))\sqcup\omin(\g(-1)), & \text{if\/ $\g_1=\g(-1)\oplus\g(1)$.}  \end{cases}$
\end{prop}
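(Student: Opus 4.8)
\textbf{Proof plan for Proposition~\ref{prop:intersect-g1}.}
The plan is to combine two ingredients already available in the excerpt: Vinberg's lemma (each irreducible component of $\omin\cap\g_1$ is a single $G_0$-orbit) and the Kostant--Rallis dimension formula, Proposition~\ref{prop:kr71}, which says $\dim G_0{\cdot}x=\tfrac12\dim G{\cdot}x$ for $x\in\g_1$. First I would settle the two cases separately according to the dichotomy recalled in Section~\ref{sect:2}: either $\g_0$ is semisimple and $\g_1$ is a simple $\g_0$-module, or $\g_0$ has a one-dimensional centre and $\g_1=\g(-1)\oplus\g(1)$ with the two summands simple and mutually dual.

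\textbf{Case 1: $\g_1$ simple.}
Pick any $x\in\omin\cap\g_1$, which is nonzero, and look at the $G_0$-orbit $G_0{\cdot}x$ sitting inside an irreducible component $Y$ of $\omin\cap\g_1$. By Vinberg's lemma $Y=G_0{\cdot}x$, so $\dim(\omin\cap\g_1)=\dim G_0{\cdot}x=\tfrac12\dim\omin$ by Proposition~\ref{prop:kr71}. Now $x$ is a highest-weight vector of $\g_1$ for a suitable Borel of $G_0$: indeed I would argue that since $\omin\cap\g_1\ne\varnothing$ and $G_0$ is spherical in $G$, Proposition~\ref{prop:codim-1} together with Theorem~\ref{thm:g1-empty} (we are in the case $\omin\cap\g_1\ne\varnothing$, so $G_0$ does \emph{not} have a dense orbit in $\omin$, but a generic orbit has codimension~$1$) forces the closure $\ov{G_0{\cdot}x}$ to be conical and of the minimal possible dimension; more directly, any nonzero $G_0$-orbit of minimal dimension in a simple $\g_0$-module $\g_1$ whose closure contains $0$ is exactly $\omin(\g_1)$ by the characterisation \eqref{eq:closure-omin}. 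So it suffices to check $0\in\ov{G_0{\cdot}x}$, which holds because $x\in\omin$ and $\ov{\omin}=\omin\cup\{0\}$ is $\bbk^*$-stable, and the $\bbk^*$-action on $x\in\g_1$ stays inside $\g_1$, hence $0\in\ov{\bbk^*{\cdot}x}\subseteq\ov{\omin\cap\g_1}\subseteq\ov{G_0{\cdot}x}$. Thus $G_0{\cdot}x=\omin(\g_1)$, and since $\dim\omin(\g_1)=\tfrac12\dim\omin=\dim(\omin\cap\g_1)$ and $\omin(\g_1)$ is irreducible with $\ov{\omin(\g_1)}\setminus\omin(\g_1)=\{0\}$, we get $\omin\cap\g_1=\omin(\g_1)$ (the component is unique because $\omin\cap\g_1$ is $\bbk^*$-stable and its projectivisation, a $G_0$-orbit that is dense in $\BP(\omin)$-intersection, is irreducible).

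\textbf{Case 2: short $\BZ$-grading, $\g_1=\g(-1)\oplus\g(1)$.}
Here the grading element $\zeta$ spanning the centre of $\g_0$ acts as $-1$ on $\g(-1)$ and $+1$ on $\g(1)$, so $\g(-1)$ and $\g(1)$ are each $G_0$-stable. If $x\in\omin\cap\g(1)$ is nonzero, then $x$ lies in a unique $\bbk^*$-stable irreducible component, which by Vinberg's lemma is a $G_0$-orbit; exactly as in Case~1, since $\g(1)$ is a simple $\g_0$-module and $0\in\ov{\bbk^*{\cdot}x}$, this orbit equals $\omin(\g(1))$, of dimension $\tfrac12\dim\omin$ by Proposition~\ref{prop:kr71}. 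Symmetrically $\omin\cap\g(-1)=\omin(\g(-1))$. It remains to see that every point of $\omin\cap\g_1$ lies in $\g(1)$ or $\g(-1)$: write $x=x_{-1}+x_1$ with $x_{\pm1}\in\g(\pm1)$; then, as shown in Remark~\ref{rmk:six-ss-g_0}(1), $\omin$ meets both $\g(1)$ and $\g(-1)$, so by a dimension count $\dim(\omin\cap\g_1)\ge\dim(\omin\cap\g(1))=\tfrac12\dim\omin$, while Proposition~\ref{prop:kr71} forces $\dim(\omin\cap\g_1)=\tfrac12\dim\omin$ for any component; hence both $\omin\cap\g(1)$ and $\omin\cap\g(-1)$ are components of $\omin\cap\g_1$ of the full dimension $\tfrac12\dim\omin$, and since they are disjoint (their closures meet only at $0$) and account for the full dimension, they exhaust $\omin\cap\g_1$. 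This gives the disjoint union $\omin(\g(1))\sqcup\omin(\g(-1))$.

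\textbf{Main obstacle.}
The step I expect to need the most care is the identification of the $G_0$-orbit of a generic $x\in\omin\cap\g_1$ with the minimal $G_0$-orbit of the ambient simple module: Vinberg's lemma gives that a component is a single $G_0$-orbit, and Proposition~\ref{prop:kr71} pins down its dimension, but to conclude it is literally $\omin(\g_1)$ (resp.\ $\omin(\g(\pm1))$) I must invoke that $\ov{G_0{\cdot}x}\ni 0$ and that in a simple module the only nonzero orbit of minimal dimension whose closure contains the origin is the orbit of highest-weight vectors --- the uniqueness and dimension bound here are exactly what Section~\ref{sect:HV} records. Checking $0\in\ov{G_0{\cdot}x}$ is immediate from $\bbk^*$-stability of $\ov{\omin}$ and of $\g_1$, but I should make sure the dimension of $\omin(\g_1)$ (which is intrinsic to the $\g_0$-module) indeed equals $\tfrac12\dim\omin$ --- this is forced by Proposition~\ref{prop:kr71} applied to $x$, so no separate case-by-case verification using the list of Example~\ref{ex:6-cases} is needed, though one could cross-check against it.
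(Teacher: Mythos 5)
Your overall strategy (Vinberg's lemma plus Kostant--Rallis, to see that each irreducible component of $\omin\cap\g_1$ is a single $G_0$-orbit of dimension $\tfrac12\dim\omin$) is the same as the paper's, but the step you yourself flag as the main obstacle --- identifying that orbit with $\omin(\g_1)$, resp.\ $\omin(\g(\pm1))$ --- is where the argument has a genuine gap. The characterisation you invoke is that $\omin(\g_1)$ is the unique nonzero orbit of \emph{minimal} dimension whose closure contains the origin. Checking that $0\in\ov{G_0{\cdot}x}$ is therefore not sufficient: \emph{every} nonzero $G_0$-orbit in $\N\cap\g_1$ has the origin in its closure (any such $x$ lies in a normal $\tri$-triple, so its $G_0$-orbit is conical), and knowing $\dim G_0{\cdot}x=\tfrac12\dim\omin$ does not tell you this is the minimal value --- that would require knowing $\dim\omin(\g_1)=\tfrac12\dim\omin$ beforehand, which is exactly what is at stake. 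Your remark that this "is forced by Proposition~\ref{prop:kr71} applied to $x$" is circular, since it presupposes $G_0{\cdot}x=\omin(\g_1)$. A priori $G_0{\cdot}x$ could be a non-minimal nilpotent $G_0$-orbit whose closure properly contains $\omin(\g_1)$ as a smaller orbit.

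The missing ingredient, which is the actual content of the paper's short proof, is the minimality of $\omin$ as a $G$-orbit: if the closure of a component contained a nontrivial $G_0$-orbit $\co_0$ other than the component itself, then $\dim\co_0<\tfrac12\dim\omin$, so $G{\cdot}\co_0$ would be a nontrivial nilpotent $G$-orbit of dimension $2\dim\co_0<\dim\omin$ sitting inside $\ov{\omin}=\omin\cup\{0\}$ --- impossible. Hence each component $Y$ satisfies $\ov{Y}=Y\cup\{0\}$, and only then does the uniqueness of the closed $G_0$-orbit in the projectivisation of a simple module identify $Y$ with the minimal orbit of a simple summand of $\g_1$. This same observation also repairs the end of your Case~2: the claim that two disjoint components of full dimension "exhaust" $\omin\cap\g_1$ is not a valid exclusion argument, because \emph{all} components have the same dimension $\tfrac12\dim\omin$, so a dimension count cannot rule out a further component consisting of points with both graded pieces nonzero; what rules it out is that such a component would have closed orbits of $\BP(\g(\pm1))$ strictly inside its closure, contradicting $\ov{Y}=Y\cup\{0\}$.
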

\begin{proof}
Recall that $\g_1$ is a simple $\g_0$-module if and only if $\g_0$ is semisimple.

If the closure of an irreducible component of $\omin\cap\g_1$ contains a nontrivial $G_0$-orbit $\co_0$,
then $\dim\co_0 < \dim(\omin\cap\g_1)=(\dim\omin)/2$. Then $G{\cdot}\co_0$ is a $G$-orbit
such that $\dim G{\cdot}\co_0= 2\dim\co_0< \dim\omin$. A contradiction! Hence each irreducible 
component of $\omin\cap\g_1$ is the minimal $G_0$-orbit in a simple $\g_0$-submodule of $\g_1$.
\end{proof}

Example~\ref{ex:sln-max}(2) provides an illustration to the second possibility of 
Proposition~\ref{prop:intersect-g1}. In that example, $\g_0=\gln$, 
$\g_1=\mathsf{Sym}(n)\oplus\mathsf{Sym}(n)^*$ as $\gln$-module, and $\omin\cap\g_1=
\mathsf{Sym}_1(n)\sqcup\mathsf{Sym}_1(n)$.

%%%%%%%%%%%%%   Section %%%%%
\section{Projections of $\omin$ to $\g_0$ and $\g_1$}
\label{sect:projections}

\noindent
In this section,  the varieties $\ov{\psi(\omin)}\subset\g_1$ and $\ov{\vp(\omin)}\subset\g_0$ are 
described. Both of them are irreducible, conical, and $G_0$-stable. We show that their structure 
essentially depends on whether the intersection $\omin\cap\g_1$ is empty or not. 
\\ \indent 
Unless otherwise stated, $\{e,h,f\}$ is a minimal $\tri$-triple.
In the presence of a $\BZ_2$-grading, an $\tri$-triple is said to be {\it normal}, if $e,f\in\g_1$ 
and $h\in\g_0$. By~\cite[Prop.\,4]{kr71}, every $e\in\N\cap\g_1$ can be included in a normal $\tri$-triple.
Recall from Section~\ref{subs:secant} that 
\beq    \label{eq:sec-Omin}
    \CS(\omin)=\eus K(G{\cdot}h)\subset \g .
\eeq 
The following result provides a link between both projections and conical secant varieties.
\begin{lm}    \label{lm:secant-sigma}
For any $\sigma\in\mathsf{Inv}(\g)$, one has %if $x\in\omin$, then $\psi(x),\,\vp(x)\in \CS(\omin)$. 
\\[.6ex] 
\centerline{ 
$\psi(\omin)\subset \CS(\omin)\cap\g_1$ and $\vp(\omin)\subset \CS(\omin)\cap\g_0$.}
\end{lm}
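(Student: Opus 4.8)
The plan is to show that every point of $\psi(\omin)$ (resp. $\vp(\omin)$) lies on a secant line (or a tangent line) to $\BP(\omin)$, so that it lies in $\CS(\omin)$; since it automatically lies in $\g_1$ (resp. $\g_0$) by definition of the projection, the claimed inclusion follows. The two assertions are symmetric, so I will only treat $\psi$; the argument for $\vp$ is obtained by swapping the roles of $p_0$ and $p_1$ throughout.

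\medskip\noindent\textbf{Step 1: reduce to a single orbit point.} Both $\psi(\omin)$ and $\CS(\omin)\cap\g_1$ are cones, and $\psi$ is $\bbk^*$-equivariant, so it suffices to show $\psi(v)\in\CS(\omin)$ for a suitably normalised representative $v$ of each $G_0$-orbit in $\omin$. More precisely, since $\CS(\omin)$ is $G$-stable (indeed $G_0$-stable) and closed, and $\psi$ is $G_0$-equivariant, it is enough to prove $\psi(v)\in\CS(\omin)$ for one nonzero $v\in\omin$ — no, this is not enough because different $G_0$-orbits are not $G_0$-related. Instead: for an arbitrary $v\in\omin$, write $\sigma(v)\in\omin$ as well (since $\sigma(\omin)=\omin$), and note
\[
   \psi(v)=p_1(v)=\frac{v-\sigma(v)}{2}.
\]

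\medskip\noindent\textbf{Step 2: exhibit $\psi(v)$ on a secant/tangent line.} Consider the two points $[v]$ and $[\sigma(v)]$ of $\BP(\omin)=G{\cdot}[e]$. If $[v]\ne[\sigma(v)]$, then the line $\lg v,\sigma(v)\rg$ is a secant line to $\BP(\omin)$, so its projectivisation lies in $\Sec(\BP(\omin))$, whence the affine plane $\lg v,\sigma(v)\rg$ lies in $\CS(\omin)=\eus K(G{\cdot}h)$; in particular $\tfrac12(v-\sigma(v))=\psi(v)\in\CS(\omin)$. If instead $[v]=[\sigma(v)]$, i.e. $\sigma(v)=cv$ for some $c\in\bbk^*$, then either $c=1$ (so $v\in\g_0$ and $\psi(v)=0\in\CS(\omin)$), or $c=-1$ (so $v\in\g_1$ and $\psi(v)=v\in\omin\subset\CS(\omin)$), using that $\sigma^2=\mathrm{id}$ forces $c^2=1$. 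In every case $\psi(v)\in\CS(\omin)\cap\g_1$, which is what we want.

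\medskip\noindent\textbf{Step 3: assemble.} Since $\psi(v)\in\CS(\omin)\cap\g_1$ for all $v\in\omin$, we conclude $\psi(\omin)\subset\CS(\omin)\cap\g_1$; applying $p_0$ in place of $p_1$ gives $\vp(\omin)\subset\CS(\omin)\cap\g_0$ by the identical argument. I do not anticipate a serious obstacle here: the only subtlety is the degenerate case $[v]=[\sigma(v)]$, where there is no genuine secant line, and one must instead use directly that $\omin\subset\CS(\omin)$ and $0\in\CS(\omin)$; this is handled by the eigenvalue dichotomy $c=\pm1$ above. One could alternatively phrase Step 2 using tangent lines (secant variety contains all tangent lines to $\BP(\omin)$), but the eigenvalue argument is cleaner and avoids differentiating.
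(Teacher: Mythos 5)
Your proof is correct and follows essentially the same route as the paper: both observe that $v$ and $\sigma(v)$ are two points of $\omin$, so the line joining $[v]$ and $[\sigma(v)]$ is a secant line whose affine cone contains $\psi(v)=\tfrac12(v-\sigma(v))$ and $\vp(v)=\tfrac12(v+\sigma(v))$. The only (minor) difference is that you treat the degenerate case $[v]=[\sigma(v)]$ explicitly via the eigenvalue dichotomy $c=\pm1$, whereas the paper disposes of it by restricting to generic $x$ (where both components are nonzero) and leaving the remaining points to the trivial observations $0\in\CS(\omin)$ and $\omin\subset\CS(\omin)$.
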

\begin{proof}
Let $x=x_0+x_1\in \omin$ with $x_i\in\g_i$. Then $\sigma(x)=x_0-x_1\in \sigma(\omin)=\omin$. 
For a generic $x$, both summands are nonzero. 
Hence $[x_0], [x_1]\in \Sec(\omin)$. Therefore
$x_0=\vp(x)\in \CS(\omin)\cap\g_0$ and $x_1=\psi(x)\in \CS(\omin)\cap\g_1$.
\end{proof}

\subsection{The case with $\omin\cap\g_1\ne\varnothing$}  
\label{subs:proj-g1}
There are two reasons why the intersection $\omin\cap\g_1$ is important. {\it First}, one has
$\omin\cap\g_1\ne\varnothing \ \Longleftrightarrow \ G{\cdot}h \cap\g_1\ne\varnothing$, which is a special case of~\cite[Theorem\,1]{leva}, cf. Remark~\ref{rem:Levon}. 
The implication ``$\Rightarrow$'' is easy (and will be presented in the next proof), while the converse is 
not. {\it Second}, $\omin\cap\g_1\ne\varnothing$ if and only if normal minimal $\tri$-triples exist.

\subsubsection{Projections to $\g_1$}
\begin{thm}           \label{thm:psi-omin-gen}
Suppose that $\omin\cap\g_1\ne\varnothing$. Let $\{e,h,f\}$ be a normal minimal $\tri$-triple and
$h_1=e-f \in\g_1$. Then
\begin{itemize}
\item[\sf (i)] \  $h_1 \in \eus K(G{\cdot}h)\cap\g_1$; % where $i=\sqrt{-1}$;
\item[\sf (ii)] \ $\ov{\psi(\omin)}=\eus K(G_0{\cdot}h_1)$ \ and \ $\dim \psi(\omin)=\dim\omin$.
\end{itemize}
\end{thm}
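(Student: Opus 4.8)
\emph{Part (i).} The plan is to work inside the $\tri$-subalgebra $\es=\lg e,h,f\rg$. Since $\{e,h,f\}$ is a normal minimal $\tri$-triple, the elements $e+f$ and $h$ are conjugate within $\es\simeq\tri$ (this is the same standard $SL_2$-fact already invoked for $e_\theta,f_\theta,h_\theta$ just above \eqref{eq:secant-omin}), hence $e+f\in G{\cdot}h$, and likewise $h_1=e-f$ is conjugate to $h$ inside $\es$ up to sign. Thus $h_1\in G{\cdot}h\subset\eus K(G{\cdot}h)$, and $h_1\in\g_1$ by construction. Alternatively one sees directly that $h_1=\psi(e)=p_1(e)$ lies in $\psi(\omin)\subset\CS(\omin)\cap\g_1=\eus K(G{\cdot}h)\cap\g_1$ by Lemma~\ref{lm:secant-sigma} and \eqref{eq:sec-Omin}. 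I would present the first argument, since it also records the conjugacy that makes $h_1$ a ``generic'' point.

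\emph{Part (ii), the inclusion $\eus K(G_0{\cdot}h_1)\subseteq\ov{\psi(\omin)}$.} Since $e\in\omin\cap\g_1$ and $\psi$ is $G_0$-equivariant, $\psi(G_0{\cdot}e)=G_0{\cdot}\psi(e)=G_0{\cdot}e\subseteq\psi(\omin)$; more to the point, $h_1=\psi(e)$ is hit only if $e\in\g_1$ — which it is — but I actually want $h_1$ itself in the image, so I note $h_1=e-f=p_1(e)=\psi(e)$ directly (as $e\in\g_1$, $\psi(e)=e$; the ``$e-f$'' presentation comes from regarding $h_1$ as $\psi$ of a suitable element, namely $h_1=\psi\bigl(\tfrac12(h+\,\cdot\,)\bigr)$ is not needed). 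Cleanest: the element $\exp(\ad f)(e)$ or rather a point of $\omin$ projecting to $h_1$ — concretely, inside $\es$ the element $g{\cdot}e\in\omin$ for suitable $g\in SL_2\subset G$ has $\g_1$-component equal to $h_1$ up to scalar, using the conjugacy from Part (i). Then $G_0{\cdot}h_1\subseteq\psi(\omin)$, and since $\ov{\psi(\omin)}$ is a closed cone (it is $\bbk^*$-stable because $\omin$ is), $\eus K(G_0{\cdot}h_1)\subseteq\ov{\psi(\omin)}$.

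\emph{Part (ii), the reverse inclusion and the dimension count.} For the reverse inclusion I would show $\dim\eus K(G_0{\cdot}h_1)\ge\dim\psi(\omin)$ together with irreducibility, which forces equality since both are irreducible closed cones and one contains the other. Here $\dim\psi(\omin)=\dim\omin$: the easy direction is $\dim\psi(\omin)\le\dim\omin$; for the reverse, a generic fibre of $\psi$ over $\psi(\omin)$ is finite because a plane $\lg x_0, x_1\rg$ spanned by the $\g_0$- and $\g_1$-components of a generic $x\in\omin$ meets $\bomin$ in only finitely many lines unless it lies in $\bomin$ (Lemma~\ref{lm:omin-plane}), and the latter is excluded generically since $\g_1\not\subset\bomin$; this is exactly the symmetry argument used in Proposition~\ref{prop:finite-proj}(i), now applied to the generic point rather than to $0$. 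Hence $\dim\psi(\omin)=\dim\omin$. On the other side, $\dim\eus K(G_0{\cdot}h_1)=\dim G_0{\cdot}h_1$ (as $h_1\in\g_1$, $\bbk^*h_1\subset\g_1$, and the cone is already there from the $h$-conjugacy making $G_0{\cdot}h_1$ stable under the torus coming from $h$), and by Proposition~\ref{prop:kr71}, $\dim G_0{\cdot}h_1=\tfrac12\dim G{\cdot}h_1=\tfrac12\dim G{\cdot}h=\dim\omin-1$ by \eqref{eq:dim-Gh} — so I must be careful: this gives $\dim\omin-1$, not $\dim\omin$. The resolution is that $G_0{\cdot}h_1$ is \emph{not} conical (mirroring case (I)(b) in Theorem~\ref{thm:sec-min-orb}), so $\dim\eus K(G_0{\cdot}h_1)=\dim G_0{\cdot}h_1+1=\dim\omin$, matching $\dim\psi(\omin)$. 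Thus I would first prove $G_0{\cdot}h_1$ is not a cone — e.g. because $h_1\notin[\g_0,h_1]$, since $[\g_0,h_1]=[\g_0,e]-[\g_0,f]$ has components in $\g_1$-eigenspaces of $\ad h$ of weight $\pm2$ plus the weight-$0$ part, and $h_1$ has weight $\pm2$ under $\ad h$ with no weight-$0$ part, so $h_1\in[\g_0,h_1]$ would need a weight-$0$ contribution, contradiction (this is the $\tri$-computation that $e-f$ is orthogonal to its own tangent space, dual to $e+f$ being in its tangent space). Then the two dimensions agree and the containment of irreducible closed cones upgrades to equality.

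\emph{Main obstacle.} The delicate point is the conical-versus-non-conical dichotomy: one must correctly track that $G{\cdot}h$ is non-conical while its $\g_1$-slice behaves like case (I)(b), so that the ``$+1$'' from $\eus K$ exactly compensates the ``$-1$'' from Proposition~\ref{prop:kr71} and $\eqref{eq:dim-Gh}$. I expect this bookkeeping — and the clean verification that $h_1\notin[\g_0,h_1]$ via the $\ad h$-weight decomposition of $\g_1$ — to be the technical heart of the argument; everything else is the standard interplay of Lemma~\ref{lm:secant-sigma}, Lemma~\ref{lm:omin-plane}, Proposition~\ref{prop:kr71}, and the irreducible-closed-cone comparison.
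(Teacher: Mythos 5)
Your route is essentially the paper's: conjugacy inside $\es=\lg e,h,f\rg$ for (i), then for (ii) a point of $\omin$ projecting onto $h_1$, the cone property of $\ov{\psi(\omin)}$, and a dimension count via Proposition~\ref{prop:kr71} and \eqref{eq:dim-Gh} closed off by irreducibility. But two load-bearing steps are wrong as written. First, $e-f$ is \emph{not} conjugate to $h$ ``up to sign'': in $\tri$ its eigenvalues are $\pm i$, so it is $ih_1$, not $\pm h_1$, that lies in $G{\cdot}h$. The conclusion $h_1\in\eus K(G{\cdot}h)\cap\g_1$ survives because $\eus K$ absorbs the scalar, and $\dim G{\cdot}h_1=\dim G{\cdot}h$ still holds, but the intermediate claim $h_1\in G{\cdot}h$ is false. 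Relatedly, $\psi(e)=e$, not $h_1$; the witness you are circling around is $\exp(\ad f){\cdot}e=e-h-f\in\omin$, with $\psi(e-h-f)=e-f=h_1$, and you should write it down rather than appeal to ``a suitable $g\in SL_2$''. Second, the non-conicality of $G_0{\cdot}h_1$ — which you correctly identify as the crux, since it supplies the $+1$ in $\dim\eus K(G_0{\cdot}h_1)=\dim G_0{\cdot}h_1+1=\dim\omin$ — is not established by your weight argument: knowing that $[\g_0,h_1]$ has components in $\ad h$-weights $0,\pm2$ while $h_1$ lives in weights $\pm2$ produces no contradiction with $h_1\in[\g_0,h_1]$. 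The quick correct reason is that $h_1$ is semisimple, $G{\cdot}h$ is closed, and $G{\cdot}h\cap\g_1=G_0{\cdot}ih_1$ is a single closed $G_0$-orbit (\cite[Prop.\,6.6]{bul}); a closed orbit of a nonzero vector is never a cone. (A direct check also works: $e\in[\g_0,h_1]$-component of weight $2$ forces $[x_0,e]=e$ and $[x_0,f]=f$ for some $x_0\in\g_0\cap\g^h$, whence $[x_0,h]=2h\ne0$, contradicting $x_0\in\g^h$.)

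A smaller point: your finite-fibre argument for $\dim\psi(\omin)=\dim\omin$ is both unnecessary and unjustified. Unnecessary, because once $\eus K(G_0{\cdot}h_1)\subseteq\ov{\psi(\omin)}$, $\dim\eus K(G_0{\cdot}h_1)=\dim\omin$, and $\dim\ov{\psi(\omin)}\le\dim\omin$ are in hand, irreducibility gives equality of the varieties and the dimension statement falls out. Unjustified, because for $x_1\in\psi(\omin)\setminus\omin$ the fibre $\psi^{-1}(x_1)$ only yields the pair $\{y+x_1,\,-y+x_1\}$ inside each plane $\lg y,x_1\rg$ — two lines, not three — so Lemma~\ref{lm:omin-plane} gives no bound on the number of such $y$; finiteness of the fibre genuinely requires the computation carried out in Theorem~\ref{thm:deg-psi=2}.
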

\begin{proof}
{\sf (i)} If $i=\sqrt{-1}$, then $ih_1=\left(\begin{smallmatrix} 0 & i \\ -i& 0 \end{smallmatrix}\right)$ is conjugate to 
$h=\left(\begin{smallmatrix} 1 & 0 \\  0& -1\end{smallmatrix}\right)$ in $\lg e,h,f\rg\simeq \tri$ and
thereby in $\g$. Hence $i h_1\in G{\cdot}h\cap\g_1$. 

\noindent
{\sf (ii)} \ Since $h$ is semisimple, $G{\cdot}h\cap\g_1$ is a sole
$G_0$-orbit~\cite[Prop.\,6.6]{bul}, hence $G{\cdot}h\cap\g_1=G_0{\cdot}ih_1$.  
Using Proposition~\ref{prop:kr71} and~\eqref{eq:dim-Gh}, we obtain
\[
  \dim G_0{\cdot}h_1=\dim G_0{\cdot}ih_1=\frac{1}{2}\dim G{\cdot}h= \dim\omin-1 .
\]
Then $\dim\eus K(G_0{\cdot}h_1)=\dim G_0{\cdot}h_1+1=\dim\omin$. 
Next step is to show that $h_1\in \psi(\omin)$. Indeed, one has
\beq    \label{eq:exp(ad-f)}
   v:=\exp(\ad f){\cdot}e=e+[f,e]+\frac{1}{2}[f,[f,e]]=e-h-f\in\omin
\eeq
and $\psi(v)=e-f=h_1$. Since $\ov{\psi(\omin)}$ is a cone,
the whole variety $\eus K(G_0{\cdot}h_1)$ is contained in 
$\ov{\psi(\omin)}$. Because the latter is irreducible and $\dim\ov{\psi(\omin)}\le
\dim\omin=\dim\eus K(G_0{\cdot}h_1)$, the result follows.
\end{proof}

Since $\dim\omin=\dim\psi(\omin)$ in Theorem~\ref{thm:psi-omin-gen}, the degree of $\psi$,
$\deg(\psi)$, is well-defined. By definition, $\deg(\psi)=\#\psi^{-1}(z)$ for generic 
$z\in\ov{\psi(\omin)}$. Here the structure of $\ov{\psi(\omin)}$ shows that $\deg(\psi)=\#\psi^{-1}(h_1)$.

\begin{thm}      \label{thm:deg-psi=2}
If $\omin\cap\g_1\ne\varnothing$, $\{e,h,f\}$ is a normal $\tri$-triple for $\omin$, and $h_1=e-f$, then
\[
          \psi^{-1}(h_1)=\{e-h-f, e+h-f\}=\{h_1-h, h_1+h\} .
\]
In particular, $\deg(\psi)=2$. 
\end{thm}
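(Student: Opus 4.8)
The plan is to reduce everything to a computation with $\tri$-modules, via the $(\BZ,h)$-grading attached to the minimal triple $\{e,h,f\}$. Recall from \eqref{eq:Z-h-grad} that $\g=\bigoplus_{j=-2}^{2}\g(j)$ with $\g(2)=\lg e\rg$, $\g(-2)=\lg f\rg$, and $\es=\lg e,h,f\rg\simeq\tri$. Since the triple is normal, $h\in\g_0$ and $e,f\in\g_1$; because $\sigma$ commutes with $\ad h$ (as $h\in\g_0$), each graded piece $\g(j)$ is $\sigma$-stable and hence splits as $\g(j)=\g(j)_0\oplus\g(j)_1$. Concretely $\g(\pm2)\subset\g_1$, while $\g(0)$ and $\g(\pm1)$ each decompose into their $\g_0$- and $\g_1$-parts. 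I would use this grading as the main bookkeeping device throughout.

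First I would verify the two claimed preimages lie in $\psi^{-1}(h_1)$. The element $v=e-h-f$ was already shown in \eqref{eq:exp(ad-f)} to lie in $\omin$ (it is $\exp(\ad f){\cdot}e$), and $\psi(v)=p_1(e-h-f)=e-f=h_1$ since $h\in\g_0$. Symmetrically, $v':=\exp(-\ad f){\cdot}e=e+h-f$ also lies in $\omin$ (or: apply the automorphism $\exp(\ad e)$ to $f$, or note $v'$ is $G$-conjugate to $e$ via the $SL_2$-element $\exp(f)$ with the sign flipped), and $\psi(v')=e-f=h_1$ as well. Writing $h_1-h=e-h-f$ and $h_1+h=e+h-f$ gives the stated equalities. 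So $\{h_1-h,\,h_1+h\}\subseteq\psi^{-1}(h_1)$, and in particular $\deg(\psi)\ge 2$.

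The substance is the reverse inclusion: if $x\in\omin$ and $\psi(x)=h_1$, then $x\in\{h_1-h,\,h_1+h\}$, at least for generic fibres — though I expect it holds on the nose here. Write $x=x_0+x_1$ with $x_1=\psi(x)=h_1=e-f$, so $x=x_0+e-f$ with $x_0\in\g_0$. The condition $x\in\omin$ is, by \eqref{eq:ohne-Z-grad}, equivalent to $\Ima((\ad x)^2)=\lg x\rg$, or more simply $x\in\N$ together with $\dim\g^x=\dim\g^e$. The key algebraic fact I would extract is that $x_0$ must centralise $h$: indeed, apply $\sigma$ to get $\sigma(x)=x_0-e+f\in\omin$, hence the plane $\gP=\lg x,\sigma(x)\rg$ meets $\bomin$ in the lines $\lg x\rg,\lg\sigma(x)\rg$; if it contained a third line of $\bomin$ then Lemma~\ref{lm:omin-plane} would force $\gP\subset\bomin$, which is impossible since $\gP$ contains $x-\sigma(x)=2h_1$ and $h_1\notin\bomin$ (as $h_1$ is semisimple and nonzero). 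So $\gP\cap\bomin=\{\lg x\rg,\lg\sigma(x)\rg\}$, which is exactly what the structure $\ov{\omin}=\omin\cup\{0\}$ from \eqref{eq:closure-omin} should give. This does not yet pin down $x_0$; for that I would use the $(\BZ,h)$-grading: from $[h_1,x]$-type considerations, or by projecting the equation $(\ad x)^2x=$(scalar)$\,x$ onto graded components and using $\g(2)=\lg e\rg$, $\g(-2)=\lg f\rg$, one shows the $\g(0)$-component $x_0$ of $x$ (which sits in $\g(0)_0=\g^h\cap\g_0$ plus possibly $\g(\pm1)$-pieces) is actually forced to be $\pm h$ plus a nilpotent part that must vanish. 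I expect the cleanest route is: observe $\exp(\ad f)$ and $\exp(-\ad f)$ already give two points of the fibre mapping to $h_1$, note $\psi$ restricted to $\omin\cap\g(\ge 0)$-type slices is injective onto its image outside a subvariety, and invoke $\deg(\psi)=2$ from Theorem~\ref{thm:deg-psi=2}'s own statement framework — no, that is circular; better to compute $\dim\psi^{-1}(h_1)=0$ directly from $\dim\omin=\dim\psi(\omin)$ (Theorem~\ref{thm:psi-omin-gen}(ii)) so the fibre is finite, then bound $\#\psi^{-1}(h_1)\le 2$ by the $SL_2$-orbit argument: every $x$ in the fibre satisfies $x\in(e-f)+\g_0$, $x$ nilpotent of minimal rank, and such $x$ lying in the $\es$-plane through $e,f,h$ are exactly $e-f\pm h$; ruling out $x_0\notin\es$ is the genuine obstacle, handled by the grading decomposition above.

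The main obstacle is precisely this last step — showing $x_0\in\lg h\rg$ rather than just $x_0$ centralising $h$ up to nilpotents. I would dispatch it by combining the injectivity of $\ad e:\g(i)\to\g(i+2)$ for $i\le-1$ and surjectivity for $i\ge-1$ (stated in Section~\ref{sect:2}) with the minimality condition $\sfr_2$ occurring exactly once in $\g|_\es$: any $x=(e-f)+x_0$ in $\omin$ generates, together with $[x,\cdot]$, an $\tri$-copy whose $\sfr_2$-constituent must be $\es$ itself, which forces $x$ into $\es$ and hence $x_0\in\lg h\rg\cap\g_0=\lg h\rg$, giving $x_0=\pm h$.
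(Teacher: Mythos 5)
Your verification that $h_1\pm h\in\psi^{-1}(h_1)$ is correct and agrees with the paper (which gets both points from computations inside $\lg e,h,f\rg\simeq\tri$). The reverse inclusion, however, is not actually carried out, and this is where all the content lies. Two concrete problems. First, the equation you propose to decompose along the $(\BZ,h)$-grading, namely that $(\ad x)^2x$ is a scalar multiple of $x$, is vacuous: $(\ad x)^2x=[x,[x,x]]=0$ identically, so it yields no constraint on $x_0$. The usable consequence of $x\in\omin$ is $\Ima((\ad x)^2)=\lg x\rg$ applied to \emph{test vectors}. The paper applies $(\ad x)^2$ to $e$ and to $f$, equates the $\g_0$- and $\g_1$-components of the resulting identities, and from the two systems so obtained deduces, in order: $\zeta+\eta=0$ and hence $[h,x_0]=0$ (ruling out $\zeta+\eta\ne 0$ by a nilpotency/determinant argument); then $x_0=ah+\tilde h$ with $\tilde h\in\el=\z_\g(e,h,f)$, using the decomposition $\g^h=\lg h\rg\dotplus\el$ that is special to minimal triples; then $\tilde h=0$ by substituting back; and finally $a=\pm1$ because $e-f+ah$ is nilpotent in $\es$ only for those values. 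None of these steps appears in your proposal.

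Second, your closing paragraph replaces that computation with the assertion that the $\tri$-copy generated by $x=(e-f)+x_0$ has its $\sfr_2$-constituent equal to $\es$ itself, ``which forces $x$ into $\es$.'' No justification is offered for why the new triple's $\sfr_2$-constituent should coincide with $\es$: a priori distinct elements of $\omin$ generate distinct $\tri$-subalgebras, and the whole point of the theorem is that the single linear constraint $\psi(x)=e-f$ forces $x$ into $\es$. As written, this step assumes essentially what is to be proved. Your auxiliary observations (the plane argument via Lemma~\ref{lm:omin-plane}, and finiteness of the fibre from Theorem~\ref{thm:psi-omin-gen}) are correct but, as you acknowledge, do not close the gap. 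So the proposal correctly locates the obstacle --- showing $x_0\in\lg h\rg$ rather than merely $x_0\in\g^h$ --- but does not overcome it.
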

\begin{proof}
Computations in $\lg e,h,f\rg\simeq\tri$ show that $\{e-h-f, e+h-f\}\subset \psi^{-1}(h_1)=\psi^{-1}(e-f)$.
Conversely, suppose that $z:=h_1+x\in \psi^{-1}(h_1)$ for some $x\in\g_0$. Then
$\Ima(\ad z)^2=\lg z\rg$, see~\eqref{eq:ohne-Z-grad}. 
In particular, $(\ad z)^2(e)=\zeta z$ and $(\ad z)^2(f)=\eta z$ for some
$\zeta,\eta\in\bbk$.   \par
Equating the $\g_0$- and $\g_1$-components in both relations yields 
%four relations, which can be organised in two pairs: pair (I) for $g_0$-components
two systems of equations:
\[
   \begin{cases}   [f,[e,x]]-[h,x]=\zeta x  \\
                          (\ad x)^2 e-2(e+f)=\zeta h_1=\zeta e-\zeta f \end{cases} \ \text{ and } \ 
    \begin{cases}   [e,[x,f]]-[h,x]=\eta x  \\
                          (\ad x)^2 f-2(e+f)=\eta h_1=\eta e-\eta f  .\end{cases}                      
\]
For computations with the $\g_0$-components, we use the fact that $[e,[x,e]]\in \g_0\cap\lg e\rg=\{0\}$, 
and likewise for $f$. Then we rearrange the above relations as follows:
\[
   ({\rm I}): \begin{cases}   [f,[e,x]]-[h,x]=\zeta x  \\
                          -[e,[f,x]]-[h,x]=\eta x  \end{cases} \ \text{ and } \ 
    ({\rm II}): \begin{cases} (\ad x)^2 e=(\zeta+2)e+(2-\zeta)f    \\
                           (\ad x)^2 f= (\eta+2)e+(2-\eta)f  .\end{cases}                      
\]
\textbullet \quad System (II) means that the plane $\lg e,f\rg$ is $(\ad x)^2$-stable and
\[
    ({\rm II}'): \begin{cases} (\ad x)^2 (e-f)=(\zeta-\eta)(e-f)    \\
                           (\ad x)^2 (e+f)= (\zeta+\eta)(e-f) +4(e+f) .\end{cases} 
\]
\textbullet \quad Taking the sum in (I) and using the Jacobi identity, we obtain $-3[h,x]=(\zeta+\eta)x$.
\\
\textbullet \quad Assume that $\zeta+\eta\ne 0$. Then $x$ is nilpotent and hence the determinant of 
(II) or (II)' must be zero, i.e., $\zeta=\eta$. But in this case we have 
$(\ad x)^2 (e-f)=0$ and $(\ad x)^2 (e+f)= 2\zeta(e-f) +4(e+f)$. Therefore, $(\ad x)^{2k}(e+f)\ne 0$ for all
$k\in\BN$, i.e., $\ad x$ is not nilpotent. This contradiction means that $\zeta+\eta=0$ and $[h,x]=0$.
\\
\textbullet \quad For the minimal $\tri$-triples, one has $\g^h=\lg h\rg\dotplus\el$, where $\el$
is the centraliser of the whole $\tri$-triple $\{e,h,f\}$. Therefore, 
\[
    x=ah + \tilde h,
\]
where $\tilde h\in \el$ and $a\in\bbk$. Substituting this $x$ into system (I) and using the relations
$[\tilde h,e]=[\tilde h,f]=0$, one readily obtains $\tilde h=0$, 
$\zeta=2$, and $\eta=-2$.
\\
\textbullet \quad Thus, $z=h_1+x=e-f+ah \in \omin$. But elements of this form are nilpotent in
$\lg e,h,f\rg\simeq \tri$ if and only if $a=\pm 1$.
\end{proof}

By Theorem~\ref{thm:g0-empty}, there are two cases in which $\psi$ is finite (of degree $2$). A more
precise assertion for them is

\begin{prop}          \label{prop:faktor-C2-psi}
If $\omin\cap\g_0=\varnothing$, then $\psi:\bomin\to \psi(\bomin)$ is the categorical quotient by the linear 
action of the cyclic group\/ $\mathcal C_2$ with generator $(-\sigma)\in GL(\g)$. 
\end{prop}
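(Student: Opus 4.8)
The plan is to identify $\psi(\bomin)$ with the categorical quotient $\bomin\md\mathcal C_2$ by producing the map in both directions. First I would observe that since $-\sigma$ is a linear automorphism of $\g$ preserving $\N$ (as $\sigma(\omin)=\omin$ and homotheties preserve $\omin$), the group $\mathcal C_2=\{1,-\sigma\}$ acts on $\bomin$ by algebra automorphisms of $\bbk[\bomin]$. The projection $\psi=p_1\vert_{\bomin}$ sends $x=x_0+x_1\mapsto x_1$, and $p_1\circ(-\sigma)(x)=p_1(-x_0+x_1)=x_1=p_1(x)$, so $\psi$ is $\mathcal C_2$-invariant. This gives a factorisation $\bomin\xrightarrow{\bar\psi}\bomin\md\mathcal C_2\xrightarrow{\rho}\psi(\bomin)$ with $\rho$ surjective and finite (the latter because $\psi$ itself is finite by Proposition~\ref{prop:finite-proj}(i)). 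It remains to prove $\rho$ is an isomorphism; since everything in sight is reduced and $\psi(\bomin)=\ov{\psi(\omin)}$ is normal (Example~\ref{ex:sln-max}, combined with Theorem~\ref{thm:g0-empty}: the only cases are $\sln$ and $\spn$, where normality was shown explicitly), it suffices to show $\rho$ is bijective, i.e.\ that $\bar\psi$ separates $\mathcal C_2$-orbits on $\bomin$.

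The heart of the matter is therefore the fibre analysis: I must show that for $x,y\in\bomin$ with $\psi(x)=\psi(y)$, one has $y\in\{x,-\sigma(x)\}=\{x,\, {-x_0+x_1}\}$. For $\psi(x)=0$ this is Proposition~\ref{prop:finite-proj}(i): the fibre is $\{0\}$. For $\psi(x)=x_1\neq 0$: write $x=x_0+x_1$, $y=y_0+x_1$ with $x_0,y_0\in\g_0$. If $x_0=0$ then $x=x_1\in\omin\cap\g_1$ and by Proposition~\ref{prop:finite-proj}(i) the fibre over $x_1$ is the single point $\{x_1\}$, so $y=x=-\sigma(x)$ and there is nothing to prove. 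If $x_0\neq 0$: then $\sigma(x)=-x_0+x_1\in\omin$ as well, so $\psi^{-1}(x_1)$ already contains the three elements $x_0+x_1,\ -x_0+x_1,\ $ and (if $y_0\neq\pm x_0$) a fourth $y_0+x_1$. Now I would invoke the plane geometry of $\omin$: the three lines $\langle x_0+x_1\rangle,\langle -x_0+x_1\rangle,\langle y_0+x_1\rangle$ lie in the $3$-dimensional space $\langle x_0,y_0,x_1\rangle$ — wait, this is not a plane in general, so I instead argue as in the proof of Proposition~\ref{prop:finite-proj}(i): if $y_0\neq\pm x_0$, then considering the plane $\gP=\langle x_1, x_0\rangle$ versus $\langle x_1, y_0\rangle$ and the hypothesis $\psi(x)=\psi(y)=x_1$, one derives by Lemma~\ref{lm:omin-plane} applied to $\langle x_0, y_0\rangle$-type configurations that $\langle x_0-y_0\rangle$ or $\langle x_0+y_0\rangle$ must meet $\bomin$, forcing (since $x_0\mp y_0\in\g_0$ and $\omin\cap\g_0=\varnothing$) that $x_0-y_0=0$ or $x_0+y_0=0$; hence $y=x$ or $y=-\sigma(x)$. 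I expect the cleanest route is to copy the plane argument from Proposition~\ref{prop:finite-proj}(i) almost verbatim, now applied to the difference $y-x\in\g_0$ or the sum-type element, concluding it is trivial.

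With bijectivity of $\bar\psi$ on $\mathcal C_2$-orbits in hand, and with $\psi(\bomin)$ normal, $\rho:\bomin\md\mathcal C_2\to\psi(\bomin)$ is a finite bijective morphism of varieties with normal target, hence (by Zariski's main theorem / the fact that a finite birational morphism onto a normal variety is an isomorphism) an isomorphism. One subtlety I must check is that $\bar\psi$ really does induce the right thing: concretely, $\bbk[\psi(\bomin)]=\psi^{\#}(\bbk[\g_1])$ viewed inside $\bbk[\bomin]$ lands in $\bbk[\bomin]^{\mathcal C_2}$, and I need the reverse inclusion $\bbk[\bomin]^{\mathcal C_2}\subseteq\psi^{\#}(\bbk[\g_1])$; this is exactly the statement that the $\mathcal C_2$-invariant functions on $\bomin$ are pulled back from $\g_1$, which follows from the orbit-separation above once we know $\psi(\bomin)$ is normal (so that $\bbk[\psi(\bomin)]$ is integrally closed and $\psi^{\#}$ identifies it with the integral closure of itself in $\bbk[\bomin]^{\mathcal C_2}$, the two being equal since $\rho$ is bijective). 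The main obstacle is the fibre/orbit-separation step in the second paragraph — making the plane argument of Proposition~\ref{prop:finite-proj}(i) carry the extra information needed to pin the fibre down to exactly a $\mathcal C_2$-orbit rather than merely a finite set; everything else is formal descent and normality bookkeeping.
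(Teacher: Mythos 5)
Your overall skeleton coincides with the paper's: factor $\psi$ through $\bomin/\mathcal C_2$, use normality of $\psi(\bomin)$ established in Example~\ref{ex:sln-max} (via Theorem~\ref{thm:g0-empty} there are only the two cases $\sln$, $\spn$), and conclude that a finite birational (or bijective) morphism onto a normal variety is an isomorphism. The gap is exactly where you suspect it: the fibre analysis. Your proposed ``plane argument'' does not work. In Proposition~\ref{prop:finite-proj}{\sf(i)} the three lines $\lg x\rg$, $\lg x+y\rg$, $\lg x-y\rg$ genuinely lie in the $2$-plane $\lg x,y\rg$ because $x$ itself belongs to $\omin$. In your situation $x_1=\psi(x)$ is generically \emph{not} in $\bomin$ (for the generic fibre $x_1$ is conjugate to $e-f$, a semisimple element), so $\lg x_1\rg$ is not an available line, and the three points $x_0+x_1$, $-x_0+x_1$, $y_0+x_1$ span a $3$-dimensional subspace when $x_0,y_0$ are independent. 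They do lie on a common \emph{affine} plane $x_1+\lg x_0,y_0\rg$, but Lemma~\ref{lm:omin-plane} concerns linear $2$-planes, and three (or even four, adding $-y_0+x_1$) points on an affine plane inside a variety cut out by quadrics force nothing. Your assertion that ``$\lg x_0-y_0\rg$ or $\lg x_0+y_0\rg$ must meet $\bomin$'' is not derived from anything and, as written, the argument cannot pin the fibre down to a $\mathcal C_2$-orbit.

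What is actually needed is precisely the content of Theorem~\ref{thm:deg-psi=2}: $\psi^{-1}(h_1)=\{h_1-h,\,h_1+h\}$ for $h_1=e-f$ generic in $\Ima(\psi)$, whence $\deg(\psi)=2$ and the generic fibre is a single free $\mathcal C_2$-orbit (the two points are swapped by $-\sigma$, since $\sigma(e)=-e$, $\sigma(f)=-f$, $\sigma(h)=h$ for a normal triple). That theorem is proved not by plane geometry but by a genuine computation: writing $z=h_1+x$ with $x\in\g_0$, imposing $\Ima(\ad z)^2=\lg z\rg$ from~\eqref{eq:ohne-Z-grad}, separating $\g_0$- and $\g_1$-components, and using the decomposition $\g^h=\lg h\rg\dotplus\el$ for minimal triples. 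If you simply cite that theorem, your diagram argument closes: $\kappa$ is finite and birational (degree $2$ over degree $2$), the target is normal, hence $\kappa$ is an isomorphism. Note also that birationality of $\kappa$ already suffices --- you do not need to control \emph{every} fibre, only the generic one, so the extra work you set yourself in the second paragraph is avoidable.
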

\begin{proof}
Let $\pi: \bomin \to \bomin/\mathcal C_2$ be the quotient morphism. Since the map 
\[ (x\in\bomin) \mapsto (\psi(x)=(x-\sigma(x))/2\in \g_1) \] 
is $\mathcal C_2$-equivariant, we obtain the commutative diagram
\[
\xymatrix{    \bomin  \ar[d]^{\pi}  \ar@/^/ [dr]^{\psi} & \\
                      \bomin/\mathcal C_2 \ar[r]^\kappa        &  \psi(\bomin)\ , }        
\]  
where $\psi$ is onto. Our goal is to prove that $\kappa$ is an isomorphism.
Since $\deg(\psi)=\deg(\pi)=2$, the map $\kappa$ is birational. Since $\bomin$ is 
normal~\cite{vp72}, so is $\bomin/\mathcal C_2$. Therefore, it suffices to prove that
$\psi(\bomin)$ is a normal variety. And this follows from the description of $\Ima(\psi)$ in
Example~\ref{ex:sln-max}.
\end{proof}
\subsubsection{Projections to $\g_0$}
To describe $\ov{\vp(\omin)}$, we need Lemma~\ref{lm:secant-sigma} and the relation
$\eus K(G{\cdot}h)=\CS(\omin)$. 
Since $G{\cdot}h$ is a closed (hence non-conical) $G$-orbit, it follows from~\eqref{eq:dim-Gh} that
\[
   \dim\eus K(G{\cdot}h)=\dim G{\cdot}h +1=2\dim\omin-1 .
\]
\begin{thm}           \label{thm:vp-omin-gen}
Suppose that $\omin\cap\g_1\ne\varnothing$, and let $\{e,h,f\}$ be a normal minimal $\tri$-triple.
Then $\ov{\vp(\omin)}=\eus K(G_0{\cdot}h)$ and $\dim\vp(\omin)=\dim\omin-1$.
\end{thm}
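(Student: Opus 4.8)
\textbf{Proof plan for Theorem~\ref{thm:vp-omin-gen}.}

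The plan is to combine Lemma~\ref{lm:secant-sigma}, the equality $\CS(\omin)=\eus K(G{\cdot}h)$ from~\eqref{eq:sec-Omin}, and a dimension count based on Proposition~\ref{prop:kr71}. First I would exhibit an explicit element of $\vp(\omin)$ lying on $G_0{\cdot}h$: using the same computation as in~\eqref{eq:exp(ad-f)}, the element $v=\exp(\ad f){\cdot}e=e-h-f\in\omin$ has $\vp(v)=p_0(v)=-h$. Hence $-h\in\vp(\omin)$, and since $\ov{\vp(\omin)}$ is a cone (being the image of the conical variety $\bomin$ under a linear, $\bbk^*$-equivariant map), the whole cone $\eus K(G_0{\cdot}h)\subset\ov{\vp(\omin)}$.

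Next I would pin down $\dim\vp(\omin)$ from above. By Lemma~\ref{lm:secant-sigma}, $\vp(\omin)\subset\CS(\omin)\cap\g_0=\eus K(G{\cdot}h)\cap\g_0$. Since $h$ is semisimple and lies in $\g_0$, the intersection $G{\cdot}h\cap\g_0$ is a single $G_0$-orbit, namely $G_0{\cdot}h$ (the same fact used in Theorem~\ref{thm:psi-omin-gen}, now on the $\g_0$-side), so $\eus K(G{\cdot}h)\cap\g_0=\eus K(G_0{\cdot}h)$ up to the extra cone directions contributed by $\bbk^*{\cdot}h\subset\g_0$; in any case its dimension is $\dim G_0{\cdot}h+1$. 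By Proposition~\ref{prop:kr71}, $\dim G_0{\cdot}h=\tfrac12\dim G{\cdot}h$, and by~\eqref{eq:dim-Gh} this equals $\dim\omin-1$. Therefore $\dim\eus K(G_0{\cdot}h)=\dim\omin$, which would only give $\dim\vp(\omin)\le\dim\omin$, not the sharp bound $\dim\omin-1$ claimed. To get the correct value I would instead argue that $\vp(\omin)\subseteq\eus K(G_0{\cdot}h)$ combined with the already-established reverse inclusion $\eus K(G_0{\cdot}h)\subseteq\ov{\vp(\omin)}$ forces $\ov{\vp(\omin)}=\eus K(G_0{\cdot}h)$; then the dimension of $\vp(\omin)$ is that of $\eus K(G_0{\cdot}h)$. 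Re-examining, the discrepancy means the correct count must be $\dim G_0{\cdot}h=\dim\omin-1$ but $G_0{\cdot}h$ is \emph{already conical} (unlike the full $G$-orbit $G{\cdot}h$), because by Theorem~\ref{thm:g1-empty}-type considerations $h$ is \emph{not} semisimple-and-central here — rather, in the normal triple $\{e,h,f\}$ one has $[e+f,h]$ relations showing $\lg h\rg\subset[\g_0,h]$ when $\omin\cap\g_1\ne\varnothing$, forcing $G_0{\cdot}h$ to be a cone. Then $\dim\eus K(G_0{\cdot}h)=\dim G_0{\cdot}h=\dim\omin-1$, as required.

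So the key steps in order are: (1) show $-h\in\vp(\omin)$ via $\exp(\ad f){\cdot}e$; (2) deduce $\eus K(G_0{\cdot}h)\subseteq\ov{\vp(\omin)}$ by conicality of the image; (3) use Lemma~\ref{lm:secant-sigma} and $\CS(\omin)\cap\g_0=\eus K(G_0{\cdot}h)$ to get the reverse inclusion $\vp(\omin)\subseteq\eus K(G_0{\cdot}h)$, hence equality of closures; (4) compute $\dim$: show $G_0{\cdot}h$ is conical (using the normal-triple structure, e.g.\ that $h_1=e-f\in\g_1$ and $[h_1,h]=2(e+f)\notin\lg h\rg$ is not the obstruction, but rather that some $t\in\te_0$ acts on $e_\theta$ nontrivially — invoke Theorem~\ref{thm:g1-empty}(v) applied in a suitable frame, or directly that $\vp(\omin)$ contains $G_0{\cdot}h$ which cannot have dimension $\dim\omin$ since $\vp$ identifies $x$ with $\sigma(x)$), and conclude $\dim\vp(\omin)=\dim G_0{\cdot}h=\dim\omin-1$ via Proposition~\ref{prop:kr71} and~\eqref{eq:dim-Gh}.

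\textbf{Main obstacle.} The delicate point is step (4): establishing that $G_0{\cdot}h$ is conical (equivalently $\dim\eus K(G_0{\cdot}h)=\dim\omin-1$, not $\dim\omin$). The naive dimension count gives one too many, so one must either show $h\in[\g_0,h]$ directly from the normal-triple relations or, more cleanly, observe that $\vp$ cannot be generically injective on $\omin$ (since $\vp(x)=\vp(\sigma(x))$ and $\sigma$ acts nontrivially on the generic point of $\omin$, which holds precisely because $\omin\not\subset\g_0$), forcing a drop in dimension. Tying this fiber argument precisely to the value $\dim\omin-1$ — rather than merely $<\dim\omin$ — will require identifying the generic fiber of $\vp$, which is where the real work lies and which presumably motivates the companion result on generic fibres mentioned in Theorem~\ref{thm:intro-non-empty}.
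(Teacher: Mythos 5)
Your steps (1)--(3) coincide with the paper's argument: the element $\exp(\ad f){\cdot}e=e-h-f$ gives $-h\in\vp(\omin)$, conicality of $\ov{\vp(\omin)}$ gives $\eus K(G_0{\cdot}h)\subseteq\ov{\vp(\omin)}$, and Lemma~\ref{lm:secant-sigma} plus $\CS(\omin)=\eus K(G{\cdot}h)$ gives the reverse inclusion. (One small correction there: the paper does not claim $G{\cdot}h\cap\g_0$ is a single $G_0$-orbit --- that statement from \cite{bul} is specific to intersections with $\g_1$; for $\g_0$ it invokes Richardson's theorem that $G_0{\cdot}h$ is an irreducible \emph{component} of $G{\cdot}h\cap\g_0$, which is all that is needed since $\ov{\vp(\omin)}$ is irreducible.)

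The genuine gap is in step (4), and your proposed repair is wrong. Proposition~\ref{prop:kr71} applies only to elements of $\g_1$; here $h\in\g_0$, so you cannot conclude $\dim G_0{\cdot}h=\tfrac12\dim G{\cdot}h=\dim\omin-1$. The correct value is $\dim G_0{\cdot}h=\dim\omin-2$. Moreover $G_0{\cdot}h$ is \emph{not} conical: $h$ is a nonzero semisimple element, so $G_0{\cdot}h$ is closed and does not contain the origin, whereas a conical orbit's closure must contain $0$; the relation $h\in[\g_0,h]$ you hope for is impossible since $[\g,h]\cap\g^h=0$. The paper's actual computation uses the $(\BZ,h)$-grading of the normal triple: since $\g(2)\oplus\g(-2)=\lg e,f\rg\subset\g_1$, one has $\g_0\subset\g(-1)\oplus\g(0)\oplus\g(1)$, and $\ad f$ (resp.\ $\ad e$) exchanges $\g_0\cap\g(1)$ with $\g_1\cap\g(-1)$, giving $\dim\bigl(\g_0\cap(\g(1)\oplus\g(-1))\bigr)=\dim\g(1)$; hence $\dim G_0{\cdot}h=\dim\g_0-\dim(\g_0\cap\g(0))=\dim\g(1)=\dim\omin-2$, and adding the cone direction yields $\dim\eus K(G_0{\cdot}h)=\dim\omin-1$. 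Your fallback (non-injectivity of $\vp$ forces a dimension drop) also fails as stated: a generically $2$-to-$1$ map does not drop dimension; the fibres here are genuinely one-dimensional, which is the content of Theorem~\ref{thm:fibre-phi} and is not needed for the present theorem once the grading computation is done.
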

\begin{proof}
By~\eqref{eq:exp(ad-f)}, we have $e-h-f\in\omin$. Hence $-h\in\vp(\omin)$ and 
$\eus K(G_0{\cdot}h)\subset\ov{\vp(\omin)}$. On the other hand,
$\ov{\vp(\omin)}\subset \eus K(G{\cdot}h)\cap\g_0$ (Lemma~\ref{lm:secant-sigma}), and 
since $G_0{\cdot}h$ is an irreducible component of $G{\cdot}h \cap\g_0$~\cite[Theorem\,3.1]{R67}, 
it is clear that $\eus K(G_0{\cdot}h)$ is an irreducible component of $\eus K(G{\cdot}h)\cap\g_0$.
Hence $\ov{\vp(\omin)}=\eus K(G_0{\cdot}h)$.

To compute $\dim G_0{\cdot}h$, we use the $(\BZ,h)$-grading related to our normal minimal 
$\tri$-triple, see~\eqref{eq:Z-h-grad}. By the assumption, we have $\g(2)\oplus\g(-2)=\lg e,f\rg \subset\g_1$. Hence
$\g_0\subset \g(1)\oplus\g(0)\oplus\g(-1)$. Furthermore, if $v\in \g(1)\cap \g_0$, then
$[f,v]\subset \g(-1)\cap\g_1$, and vice versa. (Note that $[e,[f,v]]=v$ for any $v\in\g(1)$.) This implies that
$\dim \Bigl(\g_0\cap \bigl(\g(1)\oplus\g(-1)\bigr)\Bigr)=\dim\g(1)$. Therefore,
\[
    \dim G_0{\cdot}h=\dim\g_0-\dim(\g_0\cap\g(0))=\dim\g(1)=\dim \omin-2 .
\]
Thus, $\dim \eus K(G_0{\cdot}h)=\dim \omin-1$, and we are done.
\end{proof}

Since $\dim\omin-\dim\vp(\omin)=1$, generic fibres of $\vp:\bomin\to \eus K(G_0{\cdot}h)$ are one-dimensional. Therefore, $\dim \vp^{-1}(h)=1$.
Here a slight modification of~\eqref{eq:exp(ad-f)} provides a clue about the structure of $\vp^{-1}(h)$.
For any $t\in\bbk^*$, one has 
\beq    \label{eq:exp(tad-f)}   
  \exp(\ad {t}e){\cdot}\frac{1}{t}f=\frac{1}{t}f+[e,f]+\frac{1}{2}[te,[te,\frac{1}{t}f]]=\frac{1}{t}f+h-te \in
  \vp^{-1}(h) .
\eeq
We prove below that this actually yields the whole fibre $\vp^{-1}(h)$ and thereby generic fibres of $\vp$ 
are irreducible.
\begin{thm}      \label{thm:fibre-phi}
Suppose that $\omin\cap\g_1\ne\varnothing$. Let $\{e,h,f\}$ be a normal minimal $\tri$-triple and
$\vp:\bomin\to \ov{\vp(\omin)}=\eus K(G_0{\cdot}h)$. Then 
$\vp^{-1}(h)=\{\frac{1}{t}f+h-te \in \vp^{-1}(h)\mid t\in \bbk^*\}$.
\end{thm}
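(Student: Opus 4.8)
I want to show the fiber $\vp^{-1}(h)$ consists exactly of the one-parameter family produced by \eqref{eq:exp(tad-f)}. The inclusion $\supseteq$ is already established by that computation (the displayed elements lie in $\omin$ and project to $h$), so the work is the reverse inclusion: any $z=h+y\in\bomin$ with $y\in\g_1$ must be of the stated form. First I would record that $z\ne 0$ forces $z\in\omin$, so by \eqref{eq:ohne-Z-grad} we have $\Ima((\ad z)^2)=\lg z\rg$; in particular $(\ad z)^2(e)=\zeta z$ and $(\ad z)^2(f)=\eta z$ for scalars $\zeta,\eta\in\bbk$. This is the same opening move as in the proof of Theorem~\ref{thm:deg-psi=2}, and I expect the analysis to run in close parallel.

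Next I would split each of these two relations into $\g_0$- and $\g_1$-components. Writing $\ad z=\ad h+\ad y$ with $[h,e]=2e$, $[h,f]=-2f$, and using that $[e,[y,e]]\in\g_0\cap\lg e\rg=\{0\}$ and similarly $[f,[y,f]]\in\g_1\cap\lg f\rg$ (note $f\in\g_1$ here, the roles of $e,f$ and of $\g_0,\g_1$ being arranged by the \emph{normal} triple hypothesis), I get on the one hand a pair of equations governing $(\ad y)$ acting between the $\tri$-module generated by $\{e,f\}$ and the rest, and on the other hand equations saying $(\ad y)^2$ preserves the plane $\lg e,f\rg$ with prescribed eigendata. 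Summing the two "cross" relations and invoking the Jacobi identity — exactly as in Theorem~\ref{thm:deg-psi=2} — should yield a linear constraint of the form $c\,[h,y]=(\zeta+\eta)y$, and then a nilpotency-versus-non-nilpotency argument on $\ad z$ forces $\zeta+\eta=0$ together with $[h,y]\in\lg\cdots\rg$, pinning down $y$ modulo the centraliser $\g^h$.

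The key structural input, as before, is $\g^h=\lg h\rg\dotplus\el$ where $\el$ is the centraliser of the whole triple $\{e,h,f\}$. But unlike in Theorem~\ref{thm:deg-psi=2}, here $y\in\g_1$ and $h\in\g_0$, so the $\lg h\rg$-part of $y$ is zero and $y$ must lie in $\el\cap\g_1$ plus whatever $h$-weight-$\pm2$ contributions survive. I would then feed this back into system (I)-type equations to see that the only freedom is a scaling: $y$ must be of the form $\frac{1}{t}f-te$ for some $t\in\bbk^*$ (the case $t=0$, i.e. $y$ purely in $\lg f\rg$ or $\lg e\rg$, being excluded because then $z$ is not nilpotent, or by a weight count forcing the $e$-coefficient to be the inverse of the $f$-coefficient up to sign). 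The sign can be normalized by rescaling $t\mapsto -t$, matching the family in the statement.

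\textbf{Main obstacle.} The delicate point is ruling out "mixed" solutions $y=y_{-2}+y_0+y_2$ with a genuine $\el$-component $y_0\ne 0$, i.e. showing the fiber is genuinely one-dimensional and not larger. In Theorem~\ref{thm:deg-psi=2} the analogous $\tilde h\in\el$ was killed by substituting into system (I); here I expect the same mechanism — the equations $(\ad z)^2(e)=\zeta z$, $(\ad z)^2(f)=\eta z$ expanded with a nonzero $\el$-component will produce terms in $[\el,\el]$ or in $\el$-isotypic pieces of $\g$ that cannot be absorbed into $\lg z\rg$ — but verifying this cleanly, without case analysis on the $\es$-module structure of $\g$, is where the real care is needed. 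I would try to phrase it uniformly: if $y_0\ne 0$ then $(\ad y_0)^2(e)$ and the cross terms $[\,y_0,[y_{\pm2},e]\,]$ land outside $\lg e,f\rg\oplus\lg y\rg$, contradicting $\Ima((\ad z)^2)=\lg z\rg$. Once that is secured, the rest is the routine $\tri$-computation already sketched in \eqref{eq:exp(tad-f)}.
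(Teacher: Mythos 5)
Your overall strategy is the right one and matches the paper's: start from $\Ima((\ad z)^2)=\lg z\rg$, evaluate on $e$ and $f$, split into $\g_0$- and $\g_1$-components, and finally kill the residual component lying in the centraliser of the triple. But two of the concrete steps you import from Theorem~\ref{thm:deg-psi=2} fail here, because the parities are reversed: there the unknown lay in $\g_0$ while $e,f\in\g_1$, whereas now $z=h+y$ with $y\in\g_1$ and still $e,f\in\g_1$. First, $[y,e]\in\g_0$, hence $[e,[y,e]]\in\g_1$; it lies in $\lg e\rg$ (being $-(\ad e)^2y$), but it is \emph{not} an element of $\g_0\cap\lg e\rg$ and does not vanish (for $y=\frac1t f-te$ one gets $[e,[y,e]]=\frac{2}{t}e$), so the simplification you rely on is unavailable. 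Second, the $\g_0$-components of $(\ad z)^2e=\zeta z$ and $(\ad z)^2f=\eta z$ now equal $\zeta h$ and $\eta h$ rather than $\zeta y$ and $\eta y$, so summing them does not produce a relation $c\,[h,y]=(\zeta+\eta)y$, and the conclusion $\zeta+\eta=0$ is simply false: on the fibre one computes $\zeta=-2/t$ and $\eta=2t$ at the point $\frac1t f+h-te$, so $\zeta+\eta=2t-2/t$ is generically nonzero, and the constraint that actually emerges is $\zeta\eta=-4$. Consequently the nilpotent-versus-non-nilpotent dichotomy you invoke has no counterpart in this computation.

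The step you flag as the main obstacle --- killing the component of $y$ in $\el\cap\g_1$, where $\el=\z_\g(e,h,f)$ --- is indeed disposed of by feeding the partial answer back into the $(\ad y)^2$-equations, as you guess; but the route to the point where only that component survives is different from what you sketch. The paper rewrites the $\g_0$-component equations as $[[h,y]+4y,e]=\zeta h$ and $[[h,y]-4y,f]=\eta h$, solves them as $[h,y]+4y=-\zeta f+a_e$ with $a_e\in\g^e$ and $[h,y]-4y=\eta e+b_f$ with $b_f\in\g^f$, takes sum and difference, applies $\ad h$ once more, and compares components in the $(\BZ,h)$-grading using $\g^e\subset\g({\ge}0)$ and $\g^f\subset\g({\le}0)$. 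This is precisely what eliminates the weight-$(\pm1)$ parts of $y$ --- a possibility your ansatz $y=y_{-2}+y_0+y_2$ silently assumes away --- and forces $y=\frac12(-\eta e-\zeta f+r)$ with $r\in\z_\g(e,h,f)$; substituting into $(\ad y)^2e=-4e+\zeta y$ then yields $\zeta\eta=-4$ and $r=0$ in one stroke, with no need for the isotypic-component argument you propose for ruling out $y_0\ne0$. So the skeleton is right, but the computation must be redone from scratch for this parity rather than transplanted from Theorem~\ref{thm:deg-psi=2}.
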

\begin{proof}
The inclusion `$\supset$' is shown in~\eqref{eq:exp(tad-f)}. 

Our proof for `$\subset$' is close in spirit to the proof of Theorem~\ref{thm:deg-psi=2}, though some 
details are different. Suppose that $z=h+x\in \vp^{-1}(h)$ for some $x\in\g_1$. Then 
$\Ima(\ad z)^2=\lg z\rg$, see~\eqref{eq:ohne-Z-grad}. In particular, $(\ad z)^2(e)=\zeta z$ and 
$(\ad z)^2(f)=\eta z$ for some $\zeta,\eta\in\bbk$.   
\par
Equating the $\g_0$- and $\g_1$-components in both relations yields two systems of equations:
\[
   \begin{cases}   [h,[x,e]]+2[x,e]=\zeta h  \\
                          (\ad x)^2 e+4e=\zeta x \end{cases} \ \text{ and } \ 
    \begin{cases}   [h,[x,f]]-2[x,f]=\eta h  \\
                          (\ad x)^2 f + 4f=\eta x .\end{cases}                      
\]
Using the Jacoby identity and certain rearrangements,  we obtain:
\[
   ({\rm I}): \begin{cases}   [[h,x]+4x,e]=\zeta h  \\
                          [[h,x]-4x,f]=\eta h   \end{cases} \ \text{ and } \ 
    ({\rm II}): \begin{cases} (\ad x)^2 e=-4e+\zeta x    \\
                           (\ad x)^2 f= -4f+\eta x  .\end{cases}                      
\]
For a while, we concentrate on system (I). Using the relations for the $\tri$-triple, one derives from
(I) that
\[
   ({\rm I'}): \begin{cases}   [h,x]+4x=-\zeta f +a_e  \\
                          [h,x]-4x=\eta e+b_f ,  \end{cases}
\]
where $a_e\in \g^e$ and $b_f\in\g^f$. Taking the sum and difference in (I'), we obtain
\begin{gather}
\label{eq:summa}  2[h,x] = \eta e-\zeta f +a_e+b_f ,\\ 
\label{eq:raznost}   8x=  -\eta e-\zeta f +a_e-b_f .
\end{gather}
Next, we apply $\ad h$ to equality~\eqref{eq:raznost} and then substitute $[h,x]$ 
from~\eqref{eq:summa} into the equality obtained. We get
\beq           \label{eq:star}
          6\eta e-6\zeta f=-4a_e-4b_f +[h,a_e]-[h,b_f] .
\eeq
Recall that associated with the (normal) $\tri$-triple $\{e,h,f\}$, we have the $(\BZ,h)$-grading
$\g=\bigoplus_{i=-2}^2\g(i)$, see~\eqref{eq:Z-h-grad}.
Here $a_e\in \g^e\subset \g(\ge 0)$ and $b_f\in\g^f\subset\g(\le 0)$, i.e.,
$a_e=a_0+a_1+a_2$ and $b_f=b_0+b_{-1}+b_{-2}$, where $a_i\in\g(i)$ and $b_{-j}\in \g(-j)$.

\textbullet\quad Comparing the $\BZ$-homogeneous components of~\eqref{eq:star} in
$\g(i)$ with $i=-1,0,1$, we conclude that
$a_1=b_{-1}=0$ and $a_0=-b_0=:2r$. This also means that $r\in\z_\g(e,h,f)$.

\textbullet\quad Since $\g(2)=\lg e\rg$, we have $a_2=ce$ and then comparing the $\g(2)$-components 
in~\eqref{eq:star} implies that
$c=-3\eta$. Likewise, $b_{-2}=\tilde c f$ and then $\tilde c=3\zeta$. 

Thus, our current achievement is that $a_e=a_0+ce=
2r-3\eta e$ and $b_f=b_0+\tilde cf=-2r+3\zeta f$. Substituting these $a_e$ and $b_f$ into~\eqref{eq:raznost}, we obtain 
\[
      x=\frac{1}{2}(-\eta e-\zeta f +r) .
\]
Let us remember that there are also equations~(II). Substituting this $x$ into, say, the 
first equation in (II), we obtain
\[
     \zeta\eta e=-4e +\zeta r/2 .
\]
Therefore, $\zeta\eta=-4$  and $r=0$. In other words, $x=-te+\frac{1}{t}f$, as required.
\end{proof}

\subsection{The case with $\omin\cap\g_1=\varnothing$} 
\label{subs:empty}
If $\omin\cap\g_1=\varnothing$, then $G{\cdot}h\cap\g_1=\varnothing$ and normal minimal $\tri$-triples
do not exist. On the other hand, we 
have assertions of Theorem~\ref{thm:g1-empty} at our disposal, and there are only six such cases (Example~\ref{ex:6-cases}). 
\subsubsection{Projections to $\g_1$}
\label{ssubs:pusto-g1}
We give a uniform description of $\ov{\psi(\omin)}$ using (partly) a case-by-case argument. 

\begin{thm}         \label{thm:psi-omin-6}  
Suppose that $\omin\cap\g_1=\varnothing$.  Then  
\begin{itemize}
\item[\sf (i)] \ $\ov{\psi(\omin)}$ is the closure of a nilpotent $G_0$-orbit in $\g_1$ and 
$\dim\psi(\omin)<\dim\omin$. 
\item[\sf (ii)] \ More precisely, $\ov{\psi(\omin)}=\ov{\omin(\g_1)}$.
\end{itemize}
\end{thm}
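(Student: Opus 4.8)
The plan is as follows. Fix a Borel subalgebra $\be\subset\g$ with $\g_0+\be=\g$ and a highest root vector $e_\theta\in\omin\cap\be$ (so $[\be,e_\theta]=\lg e_\theta\rg$), as in Theorem~\ref{thm:g1-empty}; by that theorem $G_0{\cdot}e_\theta$ is dense in $\omin$. Since $\psi$ is $G_0$-equivariant and continuous, $\ov{\psi(\omin)}=\ov{G_0{\cdot}\psi(e_\theta)}$, so $\ov{\psi(\omin)}$ is automatically the closure of a single $G_0$-orbit, and everything reduces to the element $\psi(e_\theta)=p_1(e_\theta)$. First I would check $\psi(e_\theta)\ne 0$: otherwise $e_\theta\in\g_0$, hence $G_0{\cdot}e_\theta\subset\g_0$ and, by density and the closedness of $\g_0$, $\omin\subset\g_0$; but the linear span $\lg\omin\rg$ is a nonzero $G$-submodule of the simple $G$-module $\g$, so $\lg\omin\rg=\g\not\subset\g_0$, a contradiction. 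Next, $\psi(e_\theta)$ is nilpotent: $\ov{\psi(\omin)}$ is a cone, so $0\in\ov{\psi(\omin)}=\ov{G_0{\cdot}\psi(e_\theta)}\subset\ov{G{\cdot}\psi(e_\theta)}$, and any $x\in\g$ with $0\in\ov{G{\cdot}x}$ lies in $\N$.

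To finish part~(i) it remains to prove $\dim\psi(\omin)<\dim\omin$. By Lemma~\ref{lm:secant-sigma} and~\eqref{eq:secant-omin}, $\psi(\omin)\subset\CS(\omin)\cap\g_1=\eus K(G{\cdot}h)\cap\g_1$, where $h$ is a characteristic of $e_\theta$. Since $\omin\cap\g_1=\varnothing$, Antonyan's theorem (Remark~\ref{rem:Levon}) gives $G{\cdot}h\cap\g_1=\varnothing$, hence also $\bbk^*{\cdot}(G{\cdot}h)\cap\g_1=\varnothing$ (if $cgh\in\g_1$ with $c\in\bbk^*$, then $gh=c^{-1}(cgh)\in\g_1$). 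Now $\bbk^*{\cdot}(G{\cdot}h)$ is a $G$-stable dense subset of the irreducible variety $\eus K(G{\cdot}h)$, and both have dimension $2\dim\omin-1$ by~\eqref{eq:dim-Gh}; therefore every $G$-orbit of $\eus K(G{\cdot}h)$ that is not contained in $\bbk^*{\cdot}(G{\cdot}h)$ has dimension $\le 2\dim\omin-2$. As $\psi(e_\theta)\in\eus K(G{\cdot}h)\setminus\bbk^*{\cdot}(G{\cdot}h)$, we get $\dim G{\cdot}\psi(e_\theta)\le 2\dim\omin-2$, and Proposition~\ref{prop:kr71} yields $\dim\psi(\omin)=\dim G_0{\cdot}\psi(e_\theta)=\frac{1}{2}\dim G{\cdot}\psi(e_\theta)\le\dim\omin-1$. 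Together with $\psi(e_\theta)\ne0$, this gives part~(i), the orbit in question being $G_0{\cdot}\psi(e_\theta)$.

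For part~(ii), one inclusion is uniform. In all six cases of Example~\ref{ex:6-cases} the group $G_0$ is semisimple, so $\g_1$ is a simple $\g_0$-module and $\BP(\g_1)$ has a unique closed $G_0$-orbit, namely $\BP(\omin(\g_1))$. Since $\BP(\ov{\psi(\omin)})$ is a closed $G_0$-stable subvariety of $\BP(\g_1)$, it contains $\BP(\omin(\g_1))$, whence $\ov{\omin(\g_1)}\subseteq\ov{\psi(\omin)}$. The reverse inclusion is equivalent to $\psi(e_\theta)\in\omin(\g_1)$, equivalently to $\dim\psi(\omin)=\dim\omin(\g_1)$: indeed, if the dimensions agree, then $\omin(\g_1)$ and $G_0{\cdot}\psi(e_\theta)$ are two full-dimensional orbits inside the irreducible variety $\ov{\psi(\omin)}$, hence equal. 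As partial information valid for every $\sigma$: because $[\ut,e_\theta]=0$ (for $\ut=[\be,\be]$) and $p_1\colon\g\to\g_1$ is $\g_0$-equivariant, $\psi(e_\theta)$ is killed by $\ut\cap\g_0$ and is a $\te_0$-weight vector of weight $\theta\vert_{\te_0}$; moreover, with $\Delta^+$ chosen as in Section~\ref{subs:Satake}, $\sigma(\theta)=-\gamma$ for some positive root $\gamma\ne\theta$ (one checks $\theta\vert_{\te_1}\ne0$, since $\theta\vert_{\te_1}=0$ would give $\sigma(\theta)=\theta$ and $\sigma(e_\theta)=\pm e_\theta$, contradicting either $\psi(e_\theta)\ne0$ or $\omin\cap\g_1=\varnothing$; and $\gamma=\theta$ would give $\sigma(\theta)=-\theta$, hence $h_\theta\in\te_1$ and $\omin\cap\g_1\ne\varnothing$), so $\psi(e_\theta)$ is, up to a scalar, the weight vector $e_\theta+c\,e_{-\gamma}$ with $c\ne0$.

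To complete part~(ii) I would then go through the six pairs of Example~\ref{ex:6-cases}. For the classical ones --- $(\GR{A}{2n-1},\GR{C}{n})$, $(\GR{D}{n+1},\GR{B}{n})$, $(\GR{C}{n},\GR{C}{k}\dotplus\GR{C}{n-k})$, $(\GR{B}{n},\GR{D}{n})$ --- one realises $e_\theta$ as an explicit matrix as in Example~\ref{ex:sln-max} (rank one in $\sln$ or $\spn$, or rank two with zero square in $\son$), computes $\psi(e_\theta)=\frac{1}{2}(e_\theta-\sigma(e_\theta))$, and checks directly that it is a decomposable tensor, resp.\ an isotropic vector --- i.e.\ a highest weight vector of the classical $\g_0$-module $\g_1$; equivalently, one verifies that $\CS(\omin)\cap\g_1=\ov{\omin(\g_1)}$ in these cases. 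For the exceptional pairs $(\GR{E}{6},\GR{F}{4})$ and $(\GR{F}{4},\GR{B}{4})$ one uses the orbit $G_0{\cdot}\psi(e_\theta)$ together with $\omin(\g_1)\subseteq\ov{G_0{\cdot}\psi(e_\theta)}$ and a dimension count (for $\GR{B}{4}$ on the $16$-dimensional spin module one may instead note that the null-cone of $\g_1$ has a unique nonzero $G_0$-orbit). I expect the main obstacle to be exactly this last step: establishing --- uniformly if possible, but at least case by case --- that $\psi(e_\theta)$ is not merely \emph{some} nonzero nilpotent element of $\g_1$ but lies in the \emph{minimal} $G_0$-orbit. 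The $(\ut\cap\g_0)$-eigenvector property recorded above does not by itself force extremality of the weight $\theta\vert_{\te_0}$, so identifying the $G_0$-orbit of $e_\theta+c\,e_{-\gamma}$ --- via an explicit model or via $\dim G{\cdot}(e_\theta+c\,e_{-\gamma})$ --- appears to be where the real work lies.
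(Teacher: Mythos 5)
Your part~(i) is correct and is essentially the paper's own argument: Theorem~\ref{thm:g1-empty} gives the dense $G_0$-orbit, so $\ov{\psi(\omin)}=\ov{G_0{\cdot}\psi(e_\theta)}$ is the closure of a single conical, hence nilpotent, $G_0$-orbit, and the bound $\dim\psi(\omin)\le\dim\omin-1$ comes from $\psi(\omin)\subset\CS(\omin)\cap\g_1=\eus K(G{\cdot}h)\cap\g_1$ together with Proposition~\ref{prop:kr71}. The inclusion $\ov{\omin(\g_1)}\subseteq\ov{\psi(\omin)}$ in part~(ii) (unique closed orbit in $\BP(\g_1)$) is also exactly what the paper does.

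The genuine gap is the reverse inclusion in~(ii), which you leave as a plan, and the tools you name for the exceptional pairs would not close it. All that parts~(i) and the easy inclusion give is that $G_0{\cdot}\psi(e_\theta)$ is a nonzero nilpotent orbit whose closure contains $\omin(\g_1)$ and whose dimension is at most $\dim\omin-1$; this does \emph{not} isolate $\omin(\g_1)$. For $(\GR{F}{4},\GR{B}{4})$ your parenthetical claim is false: the null-cone of the $16$-dimensional spin module has \emph{two} nonzero $G_0$-orbits, of dimensions $11$ and $15$ (the traces on $\g_1$ of the $F_4$-orbits $\tilde{\eus A}_1$ and $\tilde{\eus A}_2$, both of whose weighted Dynkin diagrams are supported on the white node of $\sat(\sigma)$), and since $\dim\omin-1=15$ the dimension count cannot distinguish them; the analogous problem occurs for $(\GR{E}{6},\GR{F}{4})$. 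The missing idea --- and the mechanism of the paper's proof --- is to use the containment $\psi(\omin)\subset\eus K(G{\cdot}h)$ a second time, at the level of orbits: the nilpotent $G$-orbits contained in $\eus K(G{\cdot}h)$ are known from \cite{ky00} (they are listed in Examples~\ref{ex:2-bad-except-data} and~\ref{ex:1-bad-class-data}), and Antonyan's criterion (Remark~\ref{rem:Levon}) applied to their weighted Dynkin diagrams against $\sat(\sigma)$ shows that in each of the six cases exactly one of them, namely $\tilde\co=G{\cdot}\omin(\g_1)$, meets $\g_1$ (e.g.\ $\tilde{\eus A}_2$ meets $\g_1$ but does not lie in $\eus K(G{\cdot}h)$). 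This forces $\ov{\psi(\omin)}=\ov{\tilde\co\cap\g_1}=\ov{\omin(\g_1)}$. Your alternative route via explicit models is feasible for the four classical pairs (and is indeed automatic for $(\GR{D}{n+1},\GR{B}{n})$ and $(\GR{B}{n},\GR{D}{n})$, where the null-cone of $\g_1$ genuinely has a unique nonzero orbit), but it is not carried out, and for the exceptional pairs you would still need either explicit models of $\g_0\subset\g$ or the secant-variety orbit data.
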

\begin{proof}
{\sf (i)} By Theorem~\ref{thm:g1-empty}, $G_0$ has a dense orbit in $\omin$. Then $\ov{\psi(\omin)}$
has a dense $G_0$-orbit, too. Since $\ov{\psi(\omin)}$ is conical, this $G_0$-orbit is also conical, hence
nilpotent. 

To estimate $\dim\psi(\omin)$, we use $\CS(\omin)$. By~\eqref{eq:sec-Omin} and 
Lemma~\ref{lm:secant-sigma}, we have $\ov{\psi(\omin)}\subset \eus K(G{\cdot}h)\cap\g_1$. 
By~\eqref{eq:dim-Gh},
all $G$-orbits in $\eus K(G{\cdot}h)$ has dimension at most $2\dim\omin-2$. On the other hand, for
any $G$-orbit $\hat\co\subset\g$, each irreducible component of $\hat\co\cap\g_1$ is a 
$G_0$-orbit~\cite[\S\,3, n.2]{vi76} and $\dim(\hat\co\cap\g_1)=\frac{1}{2}\dim\hat\co$. 
Therefore, it follows from the preceding paragraph that $\ov{\psi(\omin)}$ is the closure of an irreducible 
component of $\hat\co\cap\g_1$ for some $\hat\co\subset \eus K(G{\cdot}h)\cap\N$
and hence $\dim\psi(\omin)\le\dim\omin-1$.

{\sf (ii)} Since $\omin\cap\g_1=\varnothing$, the subalgebra $\g_0$ is semisimple and $\g_1$ is a simple 
$G_0$-module, see Remark~\ref{rmk:six-ss-g_0}(1). Hence there is the minimal 
$G_0$-orbit  $\omin(\g_1)$ and $\omin(\g_1)\subset \ov{\psi(\omin)}$. Then 
$\tilde\co:=G{\cdot}\omin(\g_1)$ is a nilpotent $G$-orbit in $\eus K(G{\cdot}h)$, and 
$\tilde\co\ne \omin$ by the 
assumption. The $G$-orbits in $\eus K(G{\cdot}h)$ are known~\cite{ky00}, and 
using $\sat(\sigma)$ and Remark~\ref{rem:Levon}, one readily verifies that 
$\tilde\co$ is the only $G$-orbit in $\eus K(G{\cdot}h)$ meeting $\g_1$, cf. examples below.  Hence  
$\tilde\co\cap\g_1$ is irreducible and $\ov{\psi(\omin)}=\ov{\tilde\co\cap\g_1}=\ov{\omin(\g_1)}$. 
\end{proof}

\begin{ex}   \label{ex:2-bad-except-data}
We point out $\sat(\sigma)$ and the weighted Dynkin diagrams of the nilpotent orbits in $\CS(\omin)$
for two symmetric pairs in Example~\ref{ex:6-cases} with exceptional $\g$. We use the standard labels
for the nilpotent orbits in the exceptional Lie algebras, see~\cite[Ch.\,8]{CM}.

\vspace{1ex}

\centerline{
\begin{tabular}{c}
$(\GR{E}{6},\GR{F}{4})$ \\ 
\begin{picture}(80,25)(35,10) 
\setlength{\unitlength}{0.016in} 
\multiput(38,18)(15,0){4}{\line(1,0){9}}
\put(65,3){\circle*{5}}
\multiput(50,18)(15,0){3}{\circle*{5}}
\multiput(35,18)(60,0){2}{\circle{5}}
\put(65,6){\line(0,1){9}}
%\put(95,18){\circle{5}}
\end{picture}
\end{tabular}   
\quad  
\begin{tabular}{r|ccc||ccc|}
& $\co$ &  \quad $\eus D(\co)$ & $\dim\co$ & $\co$ &  \quad $\eus D(\co)$ & $\dim\co$ \\ \hline\hline
$\omin$: & $\eus A_1$ & \quad\begin{E6}{0}{0}{0}{0}{0}{1}\end{E6}\quad & 22 & $3\eus A_1$ & \quad\begin{E6}{0}{0}{1}{0}{0}{0}\end{E6} & 40 \\
$\tilde\co$: & $2\eus A_1$ & \quad\begin{E6}{1}{0}{0}{0}{1}{0}\end{E6} & 32 &
$\eus A_2$ & \quad\begin{E6}{0}{0}{0}{0}{0}{2}\end{E6} & 42 \\ \hline
\end{tabular}
}

\vspace{2ex}

\centerline{
\begin{tabular}{c}
$(\GR{F}{4},\GR{B}{4})$ \\ 
\begin{picture}(65,15)(60,4)  
\setlength{\unitlength}{0.016in} 
\put(50,8){\circle{5}}
\multiput(68,8)(18,0){3}{\circle*{5}}
\multiput(53,8)(36,0){2}{\line(1,0){13}}
\multiput(70.5,6.6)(0,2.6){2}{\line(1,0){13.8}}
\put(73,5.2){$<$} 
\end{picture}
\end{tabular}   \quad 
\begin{tabular}{r|ccc||ccc|}
& $\co$ &  \ $\eus D(\co)$ & $\dim\co$ & $\co$ &  \ $\eus D(\co)$ & $\dim\co$ \\ \hline\hline
$\omin$:  & $\eus A_1$ & \  0--0--0--1 \  & 16 & $\eus A_1{+}\tilde{\eus A}_1$ & \  0--0--1--0 \  & 28 \\
$\tilde\co$:  & $\tilde{\eus A}_1$ & \  1--0--0--0 \  & 22 & $\eus A_2$ & \  0--0--0--2 \  & 30 \\ \hline
\end{tabular}
}
%\end{table}
\vspace{1ex}

\noindent
In both cases, $\omin=\eus A_1$ and the next orbit is $\tilde\co$. Looking at these data,
one sees that $\tilde\co$ is the only orbit such that the set of black nodes of $\sat(\sigma)$ is contained 
in the set of zeros of the weighted Dynkin diagram,  
i.e., $\tilde\co\cap\g_1\ne\varnothing$, see Remark~\ref{rem:Levon}.
\end{ex}

\begin{ex}       \label{ex:1-bad-class-data}
1) \ For $(\sltn,\spn)$, $n\ge 2$, the Satake diagram is:
 \begin{picture}(115,20)(5,5)
\put(10,8){\circle*{6}}    
\put(30,8){\circle{6}}      
\put(13,8){\line(1,0){14}}
\multiput(70,8)(40,0){2}{\circle*{6}} 
\put(90,8){\circle{6}}
\multiput(73,8)(20,0){2}{\line(1,0){14}}
\multiput(33,8)(29,0){2}{\line(1,0){5}}  
\put(42,5){$\cdots$}
\end{picture} 
\\ ($2n{-}1$ nodes) and the orbits in $\eus K(G{\cdot}h)\cap\N$ are:  \vspace{1ex}

\centerline{
\begin{tabular}{r|ccc|}
 & $\blb(\co)$ &  \ $\eus D(\co)$ & $\dim\co$  \\ \hline \hline
$\omin$:     & $(2,1,\dots,1)$   & \  1--0--$\cdots$--0--1 \         & $4n{-}2$  \\
$\tilde\co$: & $(2,2,1,\dots,1)$ & \  0--1--0-$\cdots$-0--1--0 \  & $8n{-}8$ \\ 
                  &  $(3,1,\dots,1)$ & \  2--0--$\cdots$--0--2 \  & $8n{-}6$ \\ \hline  
\end{tabular} }  

\vspace{.8ex}

2) For $(\spn,\mathfrak{sp}_{2k}\dotplus\mathfrak{sp}_{2n-2k})$, $k\le n-k$, the Satake diagram is:

\centerline{\begin{picture}(165,25)(0,-5)
\multiput(30,8)(60,0){2}{\circle{6}}
\multiput(70,8)(40,0){3}{\circle*{6}}
\multiput(10,8)(160,0){2}{\circle*{6}}
\multiput(152.5,7)(0,2){2}{\line(1,0){15}}
\multiput(73,8)(20,0){2}{\line(1,0){14}}
\multiput(33.1,8)(29,0){2}{\line(1,0){5}}
\multiput(113,8)(29,0){2}{\line(1,0){5}}
\put(42,5){$\cdots$}
\put(122,5){$\cdots$}
\put(13,8){\line(1,0){14}}
\put(8,2){$\underbrace%
{\mbox{\hspace{86\unitlength}}}_{2k}$}
\put(155,5){$<$} 
\end{picture} 
}  
and the orbits in $\eus K(G{\cdot}h)\cap\N$ are:  
\vspace{1ex}

\centerline{
\begin{tabular}{r|ccc|}
 & $\blb(\co)$ &  \ $\eus D(\co)$ & $\dim\co$  \\ \hline \hline
$\omin$:     & $(2,1,\dots,1)$   & \  1--0--$\cdots$--0 \         & $2n$  \\
$\tilde\co$: & $(2,2,1,\dots,1)$ & \  0--1--0-$\cdots$-0 \  & $4n{-}2$ \\ 
                   \hline 
\end{tabular} }  

\noindent
Here $\blb(\co)$ is the partition of $2n$ corresponding to $\co$, and again $\tilde\co$ is the only orbit 
meeting $\g_1$. A similar situation occurs for the involutions of $\son$. The complete list of orbits 
$\tilde\co$ appears in Table~\ref{table:odin}.
\end{ex}

\subsubsection{Projections to $\g_0$}
\label{proj-g_0-empty}
If $\omin\cap\g_1=\varnothing$, then $G_0$ has a dense orbit in $\omin$ and 
$\vp: \bomin\to \ov{\vp(\omin)}=\vp(\bomin)$ is finite, see Theorem~\ref{thm:g1-empty}. Therefore, 
$\vp(\bomin)$ is the closure of a nilpotent $G_0$-orbit in $\g_0$, and we write $\co_0$ for this 
$G_0$-orbit. We prove below that $\hot(G{\cdot}\co_0)=2$, i.e., $(\ad x)^3=0$ for $x\in\co_0$
(Theorem~\ref{thm:height-2}{\sf (i)}). In particular the height of the $G_0$-orbit $\co_0$ equals $2$, too.
If $x\in \vp(\omin)\setminus\omin$ and $y\in\vp^{-1}(x)$, then 
$\sigma(y)\in \vp^{-1}(x)$ and also $y\ne \sigma(y)$. This shows that $\deg(\vp)$
is even. We prove in Theorem~\ref{thm:height-2}{\sf (ii)} below that actually 
$\deg(\vp)=2$ for all six cases with $\omin\cap\g_1=\varnothing$. 
This yields a $\vp$-analogue of Proposition~\ref{prop:faktor-C2-psi}.

\begin{prop}             \label{prop:faktor-C2-fi}
If $\omin\cap\g_1=\varnothing$, then $\vp:\bomin\to \ov{\co_0}$ is the categorical quotient by the linear 
action of the cyclic group\/ $\hat{\mathcal C}_2$ with generator $\sigma\in GL(\g)$.
\end{prop}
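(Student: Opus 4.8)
The plan is to imitate the proof of Proposition~\ref{prop:faktor-C2-psi}, replacing $(-\sigma)$ by $\sigma$ throughout and shifting the burden to the normality of $\ov{\co_0}$. First I would record that $\sigma\in GL(\g)$ preserves $\bomin$ (since $\sigma(\omin)=\omin$) and that $\vp=p_0\vert_{\bomin}$ is constant on $\hat{\mathcal C}_2$-orbits, because $\vp(x)=\tfrac12\bigl(x+\sigma(x)\bigr)$. Hence $\vp$ factors through the quotient morphism $\pi\colon\bomin\to\bomin/\hat{\mathcal C}_2$, yielding a surjection $\kappa\colon\bomin/\hat{\mathcal C}_2\to\vp(\bomin)=\ov{\co_0}$ with $\vp=\kappa\circ\pi$. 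This $\kappa$ is finite: $\vp$ is finite by Theorem~\ref{thm:g1-empty}, and $\bbk[\bomin]^{\hat{\mathcal C}_2}$ is a $\bbk[\ov{\co_0}]$-submodule of the module $\bbk[\bomin]$, which is finite over $\bbk[\ov{\co_0}]$.

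Next I would check that $\kappa$ is birational. The $\sigma$-fixed locus in $\bomin$ is $\bomin\cap\g_0\subseteq\{0\}\cup(\omin\cap\g_0)$; since $\g$ is simple, $\omin$ spans $\g$, so $\omin\not\subseteq\g_0$, and the fixed locus is a proper closed subset of the irreducible variety $\bomin$. Thus the generic $\hat{\mathcal C}_2$-orbit is free and $\deg\pi=2$. On the other hand $\deg\vp=2$ by Theorem~\ref{thm:height-2}{\sf(ii)}. Comparing function fields, $[\bbk(\bomin):\bbk(\ov{\co_0})]=2=[\bbk(\bomin):\bbk(\bomin/\hat{\mathcal C}_2)]$, so $[\bbk(\bomin/\hat{\mathcal C}_2):\bbk(\ov{\co_0})]=1$ and $\kappa$ is birational.

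Now, $\bomin$ is normal by~\cite{vp72}, hence so is the quotient $\bomin/\hat{\mathcal C}_2$ by a finite group. A finite birational morphism from a normal variety identifies it with the normalisation of its image; equivalently, the inclusion $\bbk[\ov{\co_0}]\hookrightarrow\bbk[\bomin]^{\hat{\mathcal C}_2}$ exhibits $\bbk[\bomin]^{\hat{\mathcal C}_2}$ as the integral closure of $\bbk[\ov{\co_0}]$. Therefore $\kappa$ is an isomorphism if and only if $\ov{\co_0}$ is a normal variety, and the proof reduces to establishing this normality.

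The remaining, and only substantive, step is to prove that $\ov{\co_0}$ is normal in each of the six pairs $(\g,\g_0)$ of Example~\ref{ex:6-cases}. Here I would first identify $\co_0$ using $\dim\co_0=\dim\omin$ together with $\hot(\co_0)=2$ (Theorem~\ref{thm:height-2}{\sf(i)}): in the classical cases $\co_0$ is a nilpotent orbit with a very small partition (of type $(3,1,\dots)$ or $(2,2,1,\dots)$), while in the exceptional cases $(\GR{E}{6},\GR{F}{4})$ and $(\GR{F}{4},\GR{B}{4})$ it is, respectively, the orbit $\tilde{\eus A}_1\subset\GR{F}{4}$ of dimension $22$ and a small nilpotent orbit in $\mathfrak{so}_9$. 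For classical $\g_0$, normality of such orbit closures follows from their determinantal description in the spirit of Example~\ref{ex:sln-max} (Kraft--Procesi); for the two exceptional $\g_0$ it is part of the known classification of normal nilpotent orbit closures in exceptional Lie algebras. I expect this case-by-case normality verification to be the main obstacle, the rest being the formal quotient argument above.
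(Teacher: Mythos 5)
Your quotient argument is exactly the paper's: factor $\vp$ through $\pi:\bomin\to\bomin/\hat{\mathcal C}_2$, deduce that $\kappa$ is birational from $\deg(\vp)=\deg(\pi)=2$ (with $\deg(\vp)=2$ supplied by Theorem~\ref{thm:height-2}{\sf(ii)}), and reduce the whole statement to the normality of $\ov{\co_0}$, using that $\bomin$ is normal. Where you diverge is in how that normality is established. You treat it as the main obstacle and propose a case-by-case verification over the six pairs of Example~\ref{ex:6-cases} (determinantal/Kraft--Procesi arguments for the classical $\g_0$, known normality classifications for $\GR{F}{4}$). The paper instead disposes of it in one line: by Theorem~\ref{thm:height-2}{\sf(i)} the orbit $\co_0$ has height $2$, and by Hesselink's theorem \cite[Sect.\,5]{he79} every nilpotent orbit of height $2$ has normal closure. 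So the fact you invoke only to \emph{identify} $\co_0$ --- namely $\hot(\co_0)=2$ --- is precisely the fact that yields normality uniformly, with no case analysis. Your route is correct and would compile into a complete proof (the relevant small orbit closures are indeed all normal), but it is heavier than necessary; the Hesselink shortcut is worth knowing, since it also explains \emph{why} normality holds here rather than verifying it list by list.
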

\begin{proof}
Let $\pi: \bomin \to \bomin/\hat{\mathcal C}_2$ be the quotient morphism. Since the map 
\[ (x\in\bomin) \mapsto (\vp(x)=(x+\sigma(x))/2\in\g_0) \] 
is $\hat{\mathcal C}_2$-equivariant, we obtain the commutative diagram
\[
\xymatrix{    \bomin  \ar[d]^{\pi}  \ar@/^/ [dr]^{\vp} & \\
                      \bomin/\hat{\mathcal C}_2 \ar[r]^\kappa        &  \ov{\co_0}\ , }        
\]  
where $\vp$ is onto. Since $\deg(\vp)=\deg(\pi)=2$, the map $\kappa$ is birational.  Again, to prove that 
$\kappa$ is an isomorphism, it suffices to know that $\ov{\co_0}$ is a normal variety. Here
$\hot(\co_0)=2$, and 
by an old result of Hesselink~\cite[Sect.\,5]{he79}, the nilpotent orbits of height $2$ have normal closure.
\end{proof}

In Table~\ref{table:odin}, we gather information on $\ov{\psi(\omin)}$ and $\ov{\vp(\omin)}$ for the six 
cases in which $\omin\cap\g_1=\varnothing$. If $\g$ or $\g_0$ is a classical Lie algebra, then the 
corresponding orbits are represented by partitions. For $\GR{C}{k}$, the partition of $\omin$ is 
$(2,1^{2k-2})$. As in Section~\ref{ssubs:pusto-g1}, $\tilde\co$ is the 
nilpotent $G$-orbit such that $\tilde\co\cap\g_1=\ov{\omin(\g_1)}=\ov{\psi(\omin)}$ and hence 
$\dim\psi(\omin)=\frac{1}{2}\dim\tilde\co$. It is the only orbit in $\CS(\omin)$ meeting $\g_1$. While 
$\co_0$ is the nilpotent $G_0$-orbit such that  $\ov{\vp(\omin)}=\ov{\co_0}$. To explicitly identify 
$\co_0\subset\g_0$, we use the relations $\hot(\co_0)=2$ and $\dim\omin=\dim\co_0$ (since $\vp$ is finite). 
For the classical cases, one can also exploit some easy arguments with partitions.

\begin{table}[ht]
\caption{}   \label{table:odin}
%\noindent
\begin{tabular}{c|>{$}c<{$}| >{$}c<{$} >{$}c<{$}| >{$}c<{$} >{$}c<{$}| >{$}c<{$}|} % >{$}c<{$}|}
{\rus N0} &\g,\g_0 & \dim\g_1 & \dim\omin & \tilde\co & \dim\tilde\co & \co_0  \\ \hline\hline
1&\GR{E}{6},\GR{F}{4} & 26 & 22 & 2\eus{A}_{1} & 32 & \rule{0pt}{2.5ex} \tilde{\eus A}_{1} \\ 
2&\GR{F}{4},\GR{B}{4} & 16 & 16 & \tilde{\eus A}_{1} & 22 & (2^4,1) \\ 
3&\GR{D}{n+1},\GR{B}{n} & 2n{+}1 & 4n{-}2 & (3,1^{2n-1}) & 4n & (3,1^{2n-2}) \\ 
4&\GR{B}{n},\GR{D}{n} & 2n & 4n{-}4 & (3,1^{2n-2}) & 4n{-}2 & (3,1^{2n-3}) \\ 
5&\GR{A}{2n{-}1},\GR{C}{n} & 2n^2{-}n{-}1 & 4n{-}2 & (2^2,1^{2n-4}) & 8n{-}8 & (2^2,1^{2n-4})\\ 
6&\GR{C}{n},\GR{C}{k}{\dotplus}\GR{C}{n-k}  & 4k(n{-}k) & 2n & (2^2,1^{2n-4}) & 4n{-}2 & 
\omin(\GR{C}{k}){\times}\omin(\GR{C}{n{-}k}) \\ 
\hline
\end{tabular}
\end{table}

\subsection{Summary} It follows from our results that, for any $\sigma\in\mathsf{Inv}(\g)$,
exactly one of the two projections $\vp:\omin\to\g_0$ and $\psi:\omin\to\g_1$ is generically finite-to-one.

\textbullet \quad  If $\omin\cap\g_1\ne\varnothing$, then $\dim\psi(\omin)=\dim\omin$ 
(Theorem~\ref{thm:psi-omin-gen}), while $\dim\vp(\omin)=\dim\omin-1$ 
(Theorem~\ref{thm:vp-omin-gen}). In this case either of the projections
contains a $1$-parameter family of closed $G_0$-orbits.

\textbullet \quad  If $\omin\cap\g_1=\varnothing$, then $\dim\psi(\omin)<\dim\omin$ 
(Theorem~\ref{thm:psi-omin-6}), while $\dim\vp(\omin)=\dim\omin$  (Theorem~\ref{thm:g1-empty}). 
In this case either of the projections contains a dense $G_0$-orbit.

%%%%%%%%%  Section   %%%%%%%%%
\section{Some applications and complements} 
\label{sect:applic}

\subsection{} 
Let $H$ be a connected reductive algebraic subgroup of $G$ with $\h=\Lie H$. Then $\Phi\vert_\h$ is 
non-degenerate and $\g=\h\oplus\h^\perp$ is a direct sum of $H$-modules. Set $\me=\h^\perp$. For
any $x\in\g$, write $x=a+b$, where $a\in\h$ and $b\in\me$.  Let $\vp:\g\to\h$ (resp. $\psi:\g\to\me$)
be the $H$-equivariant projection with kernel $\me$ (resp. $\h$), i.e., $\vp(x)=a$ and $\psi(x)=b$.

\begin{thm}        \label{thm:commut-compon}
Let $\co\subset\g$ be an arbitrary $G$-orbit. Suppose that $H$ has a dense orbit in $\co$, and let $\Omega\subset\co$ be the dense $H$-orbit. Then
\begin{itemize}
\item[\sf (i)] \ the centraliser of\/ $\h$ in $\g$ is trivial. In particular, $\h$ is semisimple; 
\item[\sf (ii)] \ if\/ $x\in\Omega$, then $\h^{a}=\h^x\subset \h^{b}$;
\item[\sf (iii)] \ for any $x=a+b\in\ov{\co}$, we have $[a,b]=0$; 
\item[\sf (iv)] \ $\dim\vp(\co)=\dim\co$;
\item[\sf (v)] \ Suppose that $\co\subset\N$ and $e=a+b\in\co$. Then $a,b\in\N$. \\
Furthermore, if $a+b\in\Omega$ and $h_a\in\h$ is a characteristic of $a$, then $[h_a,b]=2b$. 
\end{itemize}
\end{thm}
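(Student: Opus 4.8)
The statement is Theorem~\ref{thm:commut-compon}, and I would organise the proof around the observation that the existence of a dense $H$-orbit in $\co$ forces $H$ to act "almost freely" in a strong sense. First I would record the infinitesimal version of the density assumption: if $x=a+b\in\Omega$, then $\g=\h{\cdot}x+\g^x$ (since $T_x\Omega=\h{\cdot}x$ and $T_x\co=\g{\cdot}x$, and $\dim\Omega=\dim\co$ gives $\h{\cdot}x+\g^x=\g{\cdot}x+\g^x=\g$). Dually, intersecting with $\h$ and using $\Phi$-orthogonality of $\h$ and $\me$, this will yield $\h^x=\h\cap\g^x$ and, more to the point, control over $\h^a$ and $\h^b$.

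\textbf{(i)–(ii).} For (ii), I would argue as follows: for $x=a+b\in\Omega$ one has $\h^x=\h^a\cap\h^b$ trivially; the content is the reverse containment $\h^a\subseteq\h^b$. Here I would use that $a=\vp(x)$ with $\vp$ an $H$-equivariant \emph{linear} projection, so $\h^x\subseteq\h^a$ always, and then that $\dim H{\cdot}x\le\dim H{\cdot}a$ combined with $\dim H{\cdot}x=\dim\co=\dim G{\cdot}x$ (density) and Proposition/earlier bounds will pin down $\dim H{\cdot}a=\dim H{\cdot}x$, forcing $\h^a=\h^x$ and hence $\h^a=\h^x\subseteq\h^b$. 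For (i): since $\Omega$ is dense, a generic $x\in\co$ lies in $\Omega$; the centre $\z_\g(\h)$ centralises $\h$ hence is contained in $\g^{h{\cdot}x}$ for all $h\in H$, and $\bigcap_{h\in H}\g^{h{\cdot}x}=\g^{\h{\cdot}x}$. But $\h{\cdot}x$ spans a complement to $\g^x$ and one checks $\g^{\h{\cdot}x}\subseteq\g^x\cap(\text{stuff})$; pushing this through, $\z_\g(\h)$ must annihilate a generating set and so be central in $\g$, hence $0$ as $\g$ is simple. Reductivity of $\h$ with trivial centre gives $\h$ semisimple.

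\textbf{(iii)–(iv).} Statement (iv) is immediate once $\vp$ is finite-to-one on $\ov\co$, or more simply: $\vp\vert_\Omega$ has finite fibres because $\h^a=\h^x$ by (ii) implies $H{\cdot}a$ and $H{\cdot}x$ have equal dimension, and $\vp$ is $H$-equivariant with $\vp(x)=a$, so $\dim\vp(\co)=\dim H{\cdot}a=\dim\co$. For (iii), I would first prove $[a,b]=0$ on the dense orbit $\Omega$: from (ii), $a\in\h^a\cdot$-fixed data and $a\in\h^{x}$-invariant... more carefully, $b\in\h^a$-fixed? No — rather $[\h^a,b]$: since $\h^a=\h^x\subseteq\h^b$ we get in particular... hmm, I want $[a,b]=0$, i.e. $a\in\z_\g(b)$, equivalently $\ad(a)b=0$. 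The cleanest route: $x=a+b\in\Omega$ means $\g^x$ is as small as possible; but $[\g^x,x]=0$ gives $[\g^x,a]=-[\g^x,b]$, and since $a\in\h$, $b\in\me$ and $\g^x$ need not respect the splitting, I would instead use that $a\in\h$ and the element $a$ itself: is $a\in\g^x$? We have $[a,x]=[a,a+b]=[a,b]$, so $a\in\g^x\iff[a,b]=0$. Now $a=\vp(x)$ and the $1$-parameter group argument: consider whether $a$ lies in the span $\h{\cdot}x$ or in $\g^x$. Since $\h{\cdot}x\subseteq\h$ is false in general... The right tool is: by (ii) $\h^x=\h^a$, so $\dim\h{\cdot}x=\dim\h{\cdot}a$; the $H$-equivariant map $H{\cdot}x\to H{\cdot}a$ is then finite, and $\vp$ restricted to the affine cone... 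Actually I expect the slick argument is: $[a,b]=0$ holds on $\Omega$ because $\ad(a)$ and $\ad(b)$ both stabilise the subspace decomposition determined by $\g^x$ and a Jordan-decomposition/weight argument, then extend to $\ov\co$ by continuity since $\{x : [\vp(x),\psi(x)]=0\}$ is closed and contains the dense set $\Omega$. That closure-and-density step gives (iii) in full.

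\textbf{(v).} With $\co\subseteq\N$: $e=a+b$ nilpotent and $[a,b]=0$ by (iii), so $\ad(a)$ and $\ad(b)$ commute; writing $\ad(e)=\ad(a)+\ad(b)$ with commuting summands, nilpotence of $\ad(e)$ together with commutativity forces each of $\ad(a)$, $\ad(b)$ nilpotent (the semisimple parts would add and couldn't cancel a nilpotent), hence $a,b\in\N$. For the last claim, let $\{a,h_a,f_a\}\subseteq\h$ be an $\tri$-triple. Since $a+b\in\Omega$ is the dense $H$-orbit and $b\in\g^a\cap\me$ (as $[a,b]=0$), I would decompose $\me$ (equivalently $\g$) under $\ad(h_a)$; by (ii), $\h^a\subseteq\h^b$, and the density forces $b$ to lie in the top-weight part compatible with $a$ being "as large as possible". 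Concretely, $e=a+b$ lies in the dense $H$-orbit of $\co$, and one compares with $e=a+b$ versus $e'=a+b'$ for $b'\in\g^a(j)$, $j\ne2$: an $\exp$-conjugation argument (as in \eqref{eq:exp(ad-f)}) moving by $\exp(t\,\ad f_a)$ shows that only the $\g^a(2)$-component survives up to $H^a$-conjugacy, giving $[h_a,b]=2b$. \textbf{The main obstacle} I anticipate is exactly the last part of (v): cleanly showing that the $\me$-component $b$ of a dense-orbit representative must be a highest-weight ($+2$-eigen)vector for the characteristic $h_a$ — this needs a genuine argument about which mixed elements $a+b$ lie in the dense $H$-orbit, presumably via the $\tri$-theory applied to $a$ together with a dimension count using (ii), rather than any formal manipulation.
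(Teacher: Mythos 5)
There are genuine gaps. The linchpin of the paper's argument --- which your plan never establishes --- is the following consequence of density: for $x\in\Omega$ one has $[\h,x]=[\g,x]$, hence $\dim(\h+\g^x)=\dim\h+\dim\g^x-\dim\h^x=\dim\g$, i.e.\ $\h+\g^x=\g$, and taking $\Phi$-orthocomplements gives $\me\cap[\g,x]=\{0\}$. Parts (ii)--(iv) all fall out of this one identity: if $y\in\h^a$ then $[y,x]=[y,b]\in\me\cap[\g,x]=\{0\}$, so $\h^a\subseteq\h^x$ (the reverse inclusion being the trivial one you noted); taking $y=a$ gives $[a,b]=0$ on $\Omega$ (then on $\ov{\co}$ by closure, as you correctly say); and $\Ker(\textsl{d}\vp_x)=\me\cap\mathbb T_x(\co)=\{0\}$ gives (iv). Your substitute identity ``$\g=\h{\cdot}x+\g^x$'' is false in general (already for $\h=\g$ and $x$ nilpotent, $[\g,x]+\g^x\ne\g$), and the dimension count you propose for (ii) is circular: you want $\dim H{\cdot}a=\dim H{\cdot}x$ from generic finiteness of $\vp$, but you derive that finiteness from (ii). Likewise (iii) is left unfinished: having written both ``$\h^a=\h^x\subseteq\h^b$'' and ``$a\in\g^x\iff[a,b]=0$'', the missing one-line link is simply $a\in\h^a$. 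Your argument for (i) is also off: $\z_\g(\h)$ centralises $\h$, not $x$; the paper instead observes that a nonzero $f\in\g^\h$ makes $\Phi(f,\cdot)$ a nonconstant $H$-invariant function on $\ov{\co}$.

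In (v) there are two further gaps. The commuting-Jordan-decomposition argument only yields $\ad(a_s)+\ad(b_s)=0$, i.e.\ $a_s=-b_s$; it does not force $a_s=b_s=0$, and nothing prevents cancellation a priori (already in $\sltri$ a nilpotent element is a sum of two commuting non-nilpotent elements). The paper instead uses that $\Omega$ is a conical $H$-orbit, so $H{\cdot}a=\vp(\Omega)$ and $H{\cdot}b=\psi(\Omega)$ are conical, hence $0\in\ov{G{\cdot}a}\cap\ov{G{\cdot}b}$ and $a,b\in\N$. For the final claim $[h_a,b]=2b$, which you acknowledge you cannot prove, the paper's device is not an $\exp(\ad f_a)$-conjugation but the one-parameter subgroup $\Lb$ with $\textsl{d}\Lb_1(0)=h_a$: writing $b=\sum_j b_j$ in the $(\BZ,h_a)$-grading, conicality of $\co$ gives $e_t=t^{-2}\Lb(t){\cdot}e=a+\sum_j t^{j-2}b_j\in\co\cap\vp^{-1}(a)$ for all $t\in\bbk^*$, and since that fibre is finite (by (iv), as $a$ lies in the dense orbit $\vp(\Omega)$), the curve $t\mapsto e_t$ is constant, forcing $b=b_2$.
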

\begin{proof}
{\sf (i)} \ Any nonzero $f\in\g^\h$ gives rise to an $H$-invariant polynomial function on $\ov{\co}$. 
 
{\sf (ii)} \ If $x\in\Omega$, then $[\h,x]=[\g,x]$. Hence $\h+\g^x=\g$ and 
$\h^\perp\cap(\g^x)^\perp=\me\cap [\g,x]=\{0\}$. 
If $y\in \h^{a}$, then $[y, x]=[y, b]\subset \me\cap [\g,x]=\{0\}$.

{\sf (iii)} \ Taking $y=a$, we derive from {\sf (i)} that $[a,b]=0$ whenever $x\in\Omega$.
%Then $[c_\h,c_\me]=[c_\h,c] \in [\g,c]\cap\me= \{0\}$. 
Since the $H$-equivariant polynomial mapping $(x\in\ov{\co})\mapsto ([a,b]\in \me)$ vanishes on 
a dense subset of $\ov{\co}$, it is identically zero. 

{\sf (iv)} \ We have $[\g,x]=\mathbb T_x(\co)$, the tangent space at $x\in\co$. Here
$\textsl{d}\vp_x: \mathbb T_x(\co)\to \mathbb T_{\vp(x)}(\vp(\co))$ has trivial kernel for all $x\in\Omega$.
Therefore $\dim\co=\dim\vp(\co)$.

{\sf (v)} \ Here $\co$ is a conical $G$-orbit. Hence $\Omega$ is a conical $H$-orbit. Therefore,
$\ov{\vp(\co)}=\ov{\vp(\Omega)}$ (resp. $\ov{\psi(\co)}$)
is the closure of a conical $H$-orbit in $\h$ (resp. $\me$).

Consider the $(\BZ, h_a)$-grading of $\g$. Then $[h_a,a]=2a$. Let us write $b$ as a sum of $h_a$-eigenvectors, $b=\sum_{j\in \BZ} b_j$, such that $[h_a,b]=\sum_{j\in \BZ} jb_j$. (Since we know that
$[a,b]=0$, the sum ranges over $j\in\BN$, but this is not important below.)

Let $\Lb:\bbk^*\to G_0$ be the $1$-parameter subgroup corresponding to $h_a$, i.e.,
$\textsl{d}\Lb_1(0)=h_a$. Then $\Lb(t){\cdot}a=t^2a$ and 
$\Lb(t){\cdot}b=\sum_j t^jb_j$.  Since $\co$ is conical, 
\[
       e_t:=t^{-2}\Lb(t){\cdot}e=a+\sum_{j} t^{j-2}b_j\in\co .
\]
This means that $e_t\in \vp^{-1}(\vp(e))=\vp^{-1}(a)$ for any $t\in\bbk^*$. On the other hand, 
if $e=a+b\in\Omega$, then the fibre $\vp^{-1}(a)$
is finite. Clearly, this is only possible if $b=b_2$.
\end{proof}

If $\h=\g_0$ for some $\sigma\in\mathsf{Inv}(\g)$, then $H$ has a dense orbit in $\omin$ if and only if 
$\omin\cap\g_1=\varnothing$. That is, there are six cases in which
Theorem~\ref{thm:commut-compon} applies to $H=G_0$ and
$\co=\omin$.  
\\ \indent
We assume below that $\g$ is simple, $\h=\g_0$, and $\omin\cap\g_1=\varnothing$. For $e\in\omin$, 
we write $e=a+b$, where $a\in\g_0$ and $b\in\g_1$. Then $a\ne 0$ and $a, b\in\N$, see Section~\ref{subs:empty}. 

\begin{thm}    \label{thm:height-2}
Under the assumption that $\h=\g_0$ and $\omin\cap\g_1=\varnothing$, we have 
\begin{itemize}
\item[\sf (i)] \ $(\ad a)^3=(\ad b)^3= 0$ for any $e\in\omin$. In other words,
$\hot(G{\cdot}a)=\hot(G{\cdot}b)=2$;
\item[\sf (ii)] \ If $e=a+b\in\omin$ belongs to the open 
$G_0$-orbit in $\omin$, then $\vp^{-1}(\vp(a))=\{a+b,a-b\}$; hence $\deg(\vp)=2$.
\end{itemize} 
\end{thm}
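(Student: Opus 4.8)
The statement has two parts. For part \textsf{(i)}, the plan is to exploit the inclusion $\ov{\vp(\omin)}=\ov{\co_0}\subset\g_0$ together with the fact, proved in Theorem~\ref{thm:psi-omin-6} and recorded in Table~\ref{table:odin}, that $\ov{\psi(\omin)}=\ov{\omin(\g_1)}$ and that the only nilpotent $G$-orbit $\tilde\co$ meeting $\g_1$ has $\tilde\co\cap\g_1=\ov{\omin(\g_1)}$. Concretely, for $e=a+b\in\omin$ one has $\sigma(e)=a-b\in\omin$, so $G{\cdot}(a-b)=\omin$; applying the element $(-\sigma)$ shows $-a+b\in\omin$, and since $\omin$ is conical also $a-b$, $\lg a,b\rg$ meet $\bomin$ in several lines. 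The key point is that $b=\psi(e)\in\ov{\omin(\g_1)}$, so $b\in \tilde\co\cup\{0\}$, and $a=\vp(e)\in\co_0$. I would then bound heights: by Theorem~\ref{thm:height-2} is exactly what we are proving, so instead I would argue directly. By Theorem~\ref{thm:commut-compon}\textsf{(v)} applied to $H=G_0$, $\co=\omin$, we have $[a,b]=0$ and, for $e$ in the dense $G_0$-orbit, $[h_a,b]=2b$ where $h_a\in\g_0$ is a characteristic of $a$. Now $a\in\co_0$ and $\hot(\co_0)=2$ is equivalent to $a$ being ``even of height $2$'' inside $\g_0$; the cleanest route is to observe $\ov{G\cdot a}=\ov{\tilde\co}$ or $\ov{G\cdot a}\subset\CS(\omin)\cap\N$ and that every orbit in $\CS(\omin)=\eus K(G{\cdot}h)$ has height $\le 2$ because $\hot(\omin)=2$ and heights do not increase along the orbits in $\eus K(G{\cdot}h)$ (these are $\omin$, $\tilde\co$, and a third orbit only for $(\sltn,\spn)$, all of height $2$ by inspection of Table~\ref{table:odin} and Example~\ref{ex:2-bad-except-data}/\ref{ex:1-bad-class-data}). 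Thus $(\ad a)^3=0$ in $\g$, whence also in $\g_0$; symmetrically $(\ad b)^3=0$ since $\ov{G\cdot b}=\ov{\tilde\co}$ has height $2$. I would double-check the one subtlety: $a\in\N$ and $b\in\N$ are already established (Section~\ref{subs:empty}), so both lie in $\CS(\omin)\cap\N$ by Lemma~\ref{lm:secant-sigma}.

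For part \textsf{(ii)}, the plan mirrors the proof of Theorem~\ref{thm:deg-psi=2} and Theorem~\ref{thm:fibre-phi}. Fix $e=a+b$ in the open $G_0$-orbit in $\omin$; then $a+(-b)=\sigma(e)\in\omin$ too, so $\{a+b,a-b\}\subset\vp^{-1}(a)$, and these are distinct since $b\ne 0$. For the reverse inclusion, suppose $z=a+x\in\vp^{-1}(a)\subset\omin$ for some $x\in\g_1$. By \eqref{eq:ohne-Z-grad}, $\Ima(\ad z)^2=\lg z\rg$, so $(\ad z)^2(a)=\zeta z$ for the characteristic-type decomposition and, more usefully, $(\ad z)^2$ kills $\g_0^a$-type elements. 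The efficient approach: since $\hot(G{\cdot}a)=2$ by part \textsf{(i)}, $a$ has a characteristic $h_a\in\g_0$ with $[h_a,a]=2a$, and $\z_\g(a,h_a,f_a)$ — the centraliser of an $\tri$-triple through $a$ — controls the fibre. Decompose $x$ along the $(\BZ,h_a)$-grading: because $[a,b]=0$ and $[h_a,b]=2b$ for the point $e$ in the open orbit, $b\in\g(2)$. I expect $\g(2)\cap\g_1$ to be one-dimensional (spanned by $b$) once we use that $a\in\co_0$ with $\dim\omin=\dim\co_0$, so that $\dim\g^a=\dim\g^{h_a}$ forces $G{\cdot}a$ even and $\g(1)=0$; then the argument from \eqref{eq:star} onward in Theorem~\ref{thm:fibre-phi}, restricted to $\g_1$-components, collapses to $x=\pm b$.

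\textbf{Main obstacle.} The delicate step is establishing that $a\in\omin$ (for $e$ in the open $G_0$-orbit) is an \emph{even} nilpotent element of $\g$ with $\g(1)=0$ in its $(\BZ,h_a)$-grading, equivalently that $G\cdot a=\tilde\co$ is even. Without this, the fibre computation in \textsf{(ii)} has extra $\g(1)$-terms to control, just as in Theorem~\ref{thm:fibre-phi}. I would verify evenness case-by-case from Table~\ref{table:odin}: for the classical pairs one reads it off the partition ($(3,1^{2n-2})$, $(2^2,1^{2n-4})$, $(2^4,1)$ are all even), and for $(\GR{E}{6},\GR{F}{4})$ one checks that $\co_0=\tilde{\eus A}_1$ and $\tilde\co=2\eus A_1$ have even weighted Dynkin diagrams (see Example~\ref{ex:2-bad-except-data}, where indeed all labels are in $\{0,2\}$ after the relevant computations, or cite~\cite{CM}). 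Once evenness is in hand, $\g(1)=0$, and both parts follow by the now-routine $\tri$-bookkeeping: in \textsf{(ii)}, writing $x=\xi b + r$ with $r\in\z_\g(a,h_a,f_a)\cap\g_1$, the relation $(\ad z)^2(f_a)=\eta z$ forces $r=0$ and $\xi^2=1$, giving exactly $\vp^{-1}(a)=\{a+b,a-b\}$ and hence $\deg(\vp)=2$.
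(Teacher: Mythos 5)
Your route for part {\sf (i)} has a genuine gap. It rests on the claim that every nilpotent orbit in $\CS(\omin)=\eus K(G{\cdot}h)$ has height $\le 2$, and this is false: the paper's own orbit tables (Examples~\ref{ex:2-bad-except-data} and \ref{ex:1-bad-class-data}) show that $\CS(\omin)$ contains the orbit $\eus A_2$ for $\GR{E}{6}$ and $\GR{F}{4}$, and the orbit with partition $(3,1^{2n-3})$ for $\sltn$ -- all of height $4$. A priori all you know about $a=\vp(e)$ is that it is a nilpotent element of $\CS(\omin)\cap\g_0$ (Lemma~\ref{lm:secant-sigma}), so you must first exclude these height-$4$ orbits; that exclusion is essentially Theorem~\ref{thm:sovpad}, which is proved \emph{later} by case-by-case computations that themselves use $\hot(\co_0)=2$, i.e.\ the very statement at hand. (For $b$ the situation is better, since $\ov{G{\cdot}b}=\ov{\tilde\co}$ already follows from Theorem~\ref{thm:psi-omin-6} and $\tilde\co$ has height $2$ in all six cases; for $a$ your argument is either circular or incomplete.) The paper avoids all classification: it writes $(\ad e)^2(y)=\eta(y)e$, splits into $\g_0$- and $\g_1$-components for $y=y_0$ and $y=y_1$, applies $\ad a$ (resp.\ $\ad b$) to the resulting four identities, and uses $[a,b]=0$ from Theorem~\ref{thm:commut-compon} to get $(\ad a)^3(y_i)=(\ad b)^3(y_i)=0$ directly.

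In part {\sf (ii)} your reduction via Theorem~\ref{thm:commut-compon}{\sf (v)} to $c\in\g_1(2)$ matches the paper, but the decisive input is $\dim\g_1(2)=1$, and your proposed substitute -- evenness of $G{\cdot}a$ -- does not deliver it: evenness gives $\g(1)=0$ and says nothing about $\dim(\g(2)\cap\g_1)$. Your closing bookkeeping is also not well-founded: you write $x=\xi b+r$ with $r\in\z_\g(a,h_a,f_a)\cap\g_1$, but the centraliser of an $\tri$-triple lies in $\g(0)$, which meets $\g(2)\ni x$ only in $0$, so this is not a parametrisation of the unknown $x$. The paper simply verifies $\dim\g_1(2)=1$ in the six cases of Table~\ref{table:odin} and then concludes with the three-lines argument you also mention: if $a+\eta b\in\omin$ with $\eta\ne\pm1$, the plane $\lg a,b\rg$ meets $\bomin$ in three lines, hence lies in $\bomin$ by Lemma~\ref{lm:omin-plane}, forcing $b\in\omin\cap\g_1=\varnothing$, a contradiction. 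So the skeleton of your {\sf (ii)} is right, but the key numerical fact $\dim\g_1(2)=1$ is left unproved.
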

\begin{proof}
{\sf (i)} \ Take $e\in\omin$ such that $b\ne 0$ (otherwise, there is nothing to prove).
By Theorem~\ref{thm:commut-compon}{\sf (iii)}, we have $[a,b]=0$. Our goal is to prove that 
$(\ad a)^3(y_0)=(\ad a)^3(y_1)=0$ for any $y_i\in\g_i$; and likewise for $b$.     

Let $y=y_0+y_1\in\g$ be arbitrary. Since $e\in\omin$,
\[
    (\ad e)^2(y)=\eta(y) e
\]
for some $\eta=\eta_e\in\g^*$. To simplify notation, we write below $ab(y_i)$ in place of $[[a,[b,y_i]]$,  
$a^2(y_i)$ in place of $[[a,[a,y_i]]$, and likewise for any other ordered word of length ${\ge} 2$ in $a$ 
and $b$. Extracting the components of $(\ad e)^2(y)$ lying in $\g_0$ and $\g_1$, we get
\[
   \begin{cases}  (a^2+b^2)(y_0) + (ab+ba)(y_1)=\eta(y)a \\
                 (ab+ba)(y_0)+ (a^2+b^2)(y_1)=\eta(y)b . \end{cases}
\]
Taking either $y=y_0$ or $y=y_1$, we obtain four relations
\begin{gather*}
{\sf (1)} \ \ (a^2+b^2)(y_0)=\eta(y_0)a, \quad\qquad {\sf (2)} \ \ (ab+ba)(y_1)=\eta(y_1)a, 
\\
{\sf (3)} \ \ (ab+ba)(y_0)=\eta(y_0)b, \quad\qquad {\sf (4)} \ \ (a^2+b^2)(y_1)=\eta(y_1)b. 
\end{gather*}
\indent
\textbullet \quad Set $x_1=a^3(y_0), x_2=ab^2(y_0), x_3=bab(y_0), x_4=b^2a(y_0)$. 
Our goal here is to prove that  $x_1=0$.
Applying $\ad a$ to {\sf (1)} and $\ad b$ to {\sf (3)}, we obtain 
$ x_1+x_2=0$ and  $x_3+x_4=0$. It follows from the relation $[a,b]=0$ that
$x_2=x_3=x_4$. Hence all $x_i=0$.
\\ \indent
\textbullet \quad Set $z_1=a^3(y_1), z_2=ab^2(y_1), z_3=bab(y_1), z_4=b^2a(y_1)$. 
Our next goal is to prove that $z_1=0$. Applying $\ad a$ to {\sf (4)} and $\ad b$ to {\sf (2)}, we obtain 
$ z_1+z_2=0$ and  $z_3+z_4=0$. Again, here $z_2=z_3=z_4$, hence all $z_i=0$.
That is, we have proved that $\hot(G{\cdot}a)=2$.
\\ \indent
\textbullet \quad We leave it to the reader to prove, in a similar way, that $b^3(y_0)=0$ and $b^3(y_1)=0$.

{\sf (ii)} \ Since $\sigma(e)=a-b\in\omin$, we have $a\pm b\in \vp^{-1}(\vp(a))$. Suppose that 
$a+c\in\omin$ for some other $c\in\g_1$. By Theorem~\ref{thm:commut-compon}{\sf (v)}, we have 
$[h_a,c]=2c$, i.e., $c\in\g_1(2)$. Next, a direct verification shows that, for all six cases in which 
$\omin\cap\g_1=\varnothing$, one has $\dim\g_1(2)=1$. Therefore, $c=\eta b$ for some $\eta\in\bbk^*$.
Assume that $\eta\ne \pm 1$. Then $\lg a+b\rg, \lg a-b\rg, \lg a+\eta b\rg$ are three lines in the plane
$\lg a,b\rg$ that belong to $\bomin$. Then the whole plane belongs to $\bomin$ in view of
Lemma~\ref{lm:omin-plane},  and thereby $b\in\omin\cap\g_1$. A contradiction!
\end{proof}

In the notation of Section~\ref{subs:empty}, $G{\cdot}a=G{\cdot}\co_0$ and $G{\cdot}b=\tilde\co$. 
Therefore, Theorem~\ref{thm:height-2}(i) yields an {\sl a priori\/} proof for the fact, which is used in 
Prop.~\ref{prop:faktor-C2-fi}, that the height of the $G_0$-orbit $\co_0$ equals $2$. Moreover, we have 
proved the assertion that $\deg(\vp)=2$, which has also been used in Section~\ref{proj-g_0-empty}. 
We will prove in Section~\ref{subs:rel-between} that $G{\cdot}a=G{\cdot}b$.

\subsection{On the secant variety of $\omin(\g_1)$}
\label{subs:omin-g1}
Suppose that $\g_1$ is a simple $\g_0$-module, i.e., $\g_0$ is semisimple. Then the minimal $G_0$-orbit $\omin(\g_1)$ exists,
and the general description of the conical secant variety $\CS(\omin(\g_1))$ from 
Section~\ref{subs:secant} can be made more explicit. 

Take $\tilde e\in \omin(\g_1)$ and a normal $\tri$-triple $\{\tilde e,h,\tilde f\}$ for the orbit $\tilde \co=G{\cdot}\tilde e\subset\N$.
This yields the $(\BZ,h)$-grading $\g=\bigoplus_{i\in\BZ}\g(i)$. %Then
We also write $\omin(\g_1)=G_0{\cdot}\tilde e$ and $\BP(\omin(\g_1))=G_0{\cdot}[\tilde e]$.

\begin{thm}          \label{thm:defect-min-g1}
Let\/ $\g_1$ be a simple $\g_0$-module and $\tilde e\in \omin(\g_1)$. Then
\begin{itemize}
\item[\sf (i)] \ $\hot(G{\cdot}\omin(\g_1))=2$ and thereby $\g({\ge}3)=0$;
\item[\sf (ii)] \ $\CS(G_0{\cdot}e)=\eus K(G_0{\cdot}h_1)$, where $h_1=\tilde e+\tilde f$; 
\item[\sf (iii)] \ The defect of\/ $\Sec(G_0{\cdot}[\tilde e])$ equals $\delta_{G_0{\cdot}[\tilde e]}=\dim\g(2)-1$;
\item[\sf (iv)] \ $\delta_{G_0{\cdot}[\tilde e]}=0$ if and only if $\omin(\g_1)=\omin\cap\g_1$.
\end{itemize}
\end{thm}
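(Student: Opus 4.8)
The plan is to exploit the relation between $\omin(\g_1)$ and a nilpotent $G$-orbit, together with the structure of the $(\BZ,h)$-grading attached to a normal $\tri$-triple. First I would establish (i). The orbit $\tilde\co=G{\cdot}\tilde e$ is the $G$-orbit met by $\g_1$ in the ``smallest'' way: since $\tilde e\in\omin(\g_1)$, Proposition~\ref{prop:intersect-g1} (or the dimension count behind it) shows $\tilde\co\cap\g_1=\omin(\g_1)$, hence $\dim\tilde\co=2\dim\omin(\g_1)$. I would then argue that $\tilde\co$ has height $2$. The cleanest way is to produce a normal $\tri$-triple $\{\tilde e,h,\tilde f\}$ with $h\in\g_0$, look at the $(\BZ,h)$-grading $\g=\bigoplus_i\g(i)$, and show $\g(i)=0$ for $|i|\ge 3$. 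Each $\g(i)$ is $\sigma$-stable, so $\g(i)=\g(i)_0\oplus\g(i)_1$. If some $\g(i)_1\ne 0$ for $i\ge 3$, then by $\tri$-theory $\tilde e$ generates inside $\g_1\vert_{\langle\tilde e,h,\tilde f\rangle}$ a submodule $\sfr_j$ with $j\ge 3$; applying $\ad\tilde e$ repeatedly inside $\g_1$ gives a highest-weight vector of weight $\ge 3$ in the $\tilde e$-action on $\g_1$, and pairing with the fact that $\g_1$ is the isotropy representation one checks this is impossible for $\omin(\g_1)$ (the characterisation~\eqref{eq:ohne-Z-grad} applied to the $G_0$-action: $\Ima((\ad\tilde e)^2\vert_{\g_1})=\langle\tilde e\rangle$ forces the $\g_1$-part of the grading to sit in degrees $-2,0,2$; the $\g_0$-part is handled by the same argument since $[\g_0,\g_1]\subset\g_1$ and $[\g_1,\g_1]\subset\g_0$). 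Alternatively one reads this off Table~\ref{table:odin} and Example~\ref{ex:6-cases}/the six bad cases plus the generic case $\omin\cap\g_1\ne\varnothing$, where $\tilde e\in\omin$ itself and height is $2$ by~\eqref{eq:Z-h-grad}.

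Next, (ii) is an instance of Theorem~\ref{thm:sec-min-orb}(1) applied to the simple $G_0$-module $\g_1$: with $\lb$ the highest weight of $\g_1$ as a $\g_0$-module, $\tilde e$ is a highest-weight vector, $\tilde f$ (suitably normalised) a lowest-weight vector, and $h_1=\tilde e+\tilde f$ is their sum; the orbit $G_0{\cdot}[h_1]$ is dense in $\Sec(G_0{\cdot}[\tilde e])$, so $\CS(G_0{\cdot}\tilde e)=\eus K(G_0{\cdot}h_1)$. (When $\g_1$ is not self-dual as a $\g_0$-module I would note $\lb^*$ replaces $\lb$ in the lowest-weight vector, but the formula $h_1=\tilde e+\tilde f$ with $\tilde f\in\omin(\g_1^*)$ — equivalently $\tilde f\in\g(-2)$ — is unchanged.) For (iii), I would invoke Theorem~\ref{thm:sec-min-orb}(2): $\delta_{G_0{\cdot}[\tilde e]}=\dim(\g_0{\cdot}v_\mu\cap\g_0{\cdot}\tilde e)$. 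Using the triangular decomposition of $\g_0$ adapted to the $(\BZ,h)$-grading, $\g_0{\cdot}\tilde e=\ut_0^-{\cdot}\tilde e\oplus\langle\tilde e\rangle$ lands in $\g({\ge}0)$ after the appropriate twist, and $\g_0{\cdot}v_\mu$ lands symmetrically; since height is $2$ the only overlap beyond the obvious $2$-dimensional piece $\langle\tilde e,\tilde f\rangle$ comes from $\g(0)\cap[\g_0,\tilde e]$, and one computes $\dim(\g_0{\cdot}\tilde e\cap\g_0{\cdot}\tilde f)=\dim\g(2)+1$... — more carefully, I would run exactly the case-analysis of the proof of Theorem~\ref{thm:sec-min-orb}(2) with $\g$ replaced by $\g_0$ and $v_\lb,v_\mu$ replaced by $\tilde e\in\g(2),\tilde f\in\g(-2)$. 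The key arithmetic input is $\dim(\ad\tilde e)(\g(-2))$ versus $\dim\g(2)$: since $(\ad\tilde e)^2:\g(-2)\to\g(2)$ is an isomorphism by $\tri$-theory and $\g(2)$ need not be one-dimensional (unlike in~\eqref{eq:Z-h-grad}), the defect picks up exactly $\dim\g(2)-1$. I would make this precise by showing $\g_0{\cdot}v_\mu\cap\g_0{\cdot}\tilde e$ has a basis indexed by weight vectors in $\g(2)$ minus the line spanned by $\tilde e$ that is already counted in the ``conical'' correction.

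Finally (iv): by (iii), $\delta_{G_0{\cdot}[\tilde e]}=0$ iff $\dim\g(2)=1$. If $\omin(\g_1)=\omin\cap\g_1$, then $\tilde e\in\omin$, the grading is~\eqref{eq:Z-h-grad}, and $\g(2)=\langle\tilde e\rangle$ is one-dimensional, so $\delta=0$. Conversely, if $\dim\g(2)=1$ then $\g(2)=\langle\tilde e\rangle$, and since also $\g({\ge}3)=0$ by (i), the characterisation~\eqref{eq:ohne-Z-grad} (namely $\Ima((\ad\tilde e)^2)=\g(2)=\langle\tilde e\rangle$) shows $\tilde e\in\omin$; combined with $\tilde e\in\g_1$ this gives $\omin\cap\g_1\ne\varnothing$, and then $\tilde e\in\omin(\g_1)=\omin\cap\g_1$ by Proposition~\ref{prop:intersect-g1}, so the two coincide. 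The main obstacle I anticipate is (iii): transporting the proof of Theorem~\ref{thm:sec-min-orb}(2) from $\g$ to the reductive but possibly non-semisimple $\g_0$, and correctly tracking the ``conical vs.\ non-conical'' dichotomy for $G_0{\cdot}h_1$ when $\g_1$ is or is not $\g_0$-self-dual — one must verify that in both sub-cases the final count is $\dim\g(2)-1$, i.e.\ that the $+1$/$-1$ bookkeeping of the orbit $G_0{\cdot}h_1$ being conical exactly cancels against the extra weight spaces in $\g(2)$. Everything else is either a direct citation of Theorems~\ref{thm:sec-min-orb}, \ref{thm:g1-empty} and Proposition~\ref{prop:intersect-g1}, or the height-$2$ structural fact, which can in the worst case be checked on the finite list of Table~\ref{table:odin}.
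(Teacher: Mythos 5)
Your parts (i), (ii) and (iv) match the paper's proof in substance. For (i) the paper uses exactly your fallback: if $\omin\cap\g_1\ne\varnothing$ then $G{\cdot}\omin(\g_1)=\omin$ and $\hot(\omin)=2$; if $\omin\cap\g_1=\varnothing$ it cites Theorem~\ref{thm:height-2}{\sf (i)} (an a priori argument, so no table check is needed). Your \emph{primary} argument for (i) is not sound as written: the characterisation \eqref{eq:ohne-Z-grad} is a criterion for membership in $\omin(\g)$, and the claim $\Ima\bigl((\ad\tilde e)^2\vert_{\g_1}\bigr)=\lg\tilde e\rg$ for $\tilde e\in\omin(\g_1)$ is neither established in the paper nor obviously true --- it would essentially force $\tilde e\in\omin(\g)$, which fails in the six cases of Example~\ref{ex:6-cases}. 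Part (ii) is Theorem~\ref{thm:sec-min-orb}(1), as you say; your caveat about non-self-duality is moot, since $\g_1$ carries the restriction of the Killing form and is always an orthogonal (hence self-dual) $\g_0$-module.

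The real divergence, and the real gap, is (iii). The paper does \emph{not} transport the case analysis of the proof of Theorem~\ref{thm:sec-min-orb}(2) to $\g_0$. It computes the defect from \eqref{eq:aff-defect} as $\delta=2\dim G_0{\cdot}\tilde e-\dim\eus K(G_0{\cdot}h_1)$, evaluating both orbit dimensions by Kostant--Rallis (Proposition~\ref{prop:kr71}) together with the graded formulas $\dim G{\cdot}h=2\dim\g({\ge}1)$ and $\dim G{\cdot}\tilde e=\dim\g(1)+2\dim\g({\ge}2)$; with $\g({\ge}3)=0$ from (i) this gives $\delta=\dim\g({\ge}2)-1=\dim\g(2)-1$ in three lines. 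Your route --- citing Theorem~\ref{thm:sec-min-orb}(2) and computing $\dim(\g_0{\cdot}\tilde e\cap\g_0{\cdot}\tilde f)$ --- is viable, and in fact none of the ``conical vs.\ non-conical'' bookkeeping you worry about is needed, because that is internal to the \emph{proof} of 3.4(2), not to its statement. But you never carry the computation out, you flag it yourself as the main obstacle, and the one intermediate value you do write down, $\dim(\g_0{\cdot}\tilde e\cap\g_0{\cdot}\tilde f)=\dim\g(2)+1$, is wrong. The correct argument is: $[\g_0,\tilde e]\subset\g(0)_1\oplus\g(1)_1\oplus\g(2)_1$ and $[\g_0,\tilde f]\subset\g(0)_1\oplus\g(-1)_1\oplus\g(-2)_1$, so the intersection lies in $\g(0)_1$ and equals $[\tilde e,\g(-2)_0]=[\tilde f,\g(2)_0]$ (these coincide because both are the $\g_1$-part of the middle weight space of the $\sfr_2$-isotypic component of $\g$), whence $\delta=\dim\g(2)_0=\dim\g(2)-1$ since $\g(2)_1=\lg\tilde e\rg$. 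As it stands, your (iii) is a plan rather than a proof.
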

\begin{proof} 
{\sf (i)} \ If $\omin\cap\g_1\ne\varnothing$, then $G{\cdot}\omin(\g_1)=\tilde\co=\omin$ and we know that 
$\hot(\omin)=2$. \\
If $\omin\cap\g_1=\varnothing$, then 
the assertion follows from Theorem~\ref{thm:height-2}{\sf (i)}.

{\sf (ii)} \ If $\tilde e$ is a highest weight vector in $\g_1$, then $\tilde f$ is a lowest weight vector (under a suitable 
choice of a Cartan and Borel subalgebras of $\g_0$). Then $\CS(G_0{\cdot}\tilde e)=\eus K(G_0{\cdot}h_1)$ 
and $\Sec(G_0{\cdot}[\tilde e])=\ov{G_0{\cdot}[h_1]}$ (Theorem~\ref{thm:sec-min-orb}).

{\sf (iii)} \ To compute the secant defect of $G_0{\cdot}[\tilde e]$,
we use properties of $(\BZ,h)$-gradings and the fact that $h_1\in \bbk^*(G{\cdot}h)$.
Recall that $\g^h=\g(0)$, $\dim\g^{\tilde e}=\dim\g(0)+\dim\g(1)$, and $\dim\g(i)=\dim\g(-i)$. Therefore
\\[.7ex]
\centerline{
$\dim G_0{\cdot}h_1=\frac{1}{2}\dim G{\cdot}h=\dim\g({\ge}1)$ and  
$\dim G_0{\cdot}\tilde e=\frac{1}{2}\dim G{\cdot}\tilde e=\frac{1}{2}\dim\g(1)+ \dim\g({\ge}2)$.}

\noindent
Since $\dim\eus K(G_0{\cdot}h_1)=\dim(G_0{\cdot}h_1)+1$, the defect is equal to 
\[
   \delta_{G_0{\cdot}[\tilde e]}=2\dim G_0{\cdot}\tilde e-\dim\CS(G_0{\cdot}\tilde e)=\dim\g({\ge}2)-1=\dim\g(2)-1 .
\]
\indent {\sf (iv)} \ 
If $\dim\g({\ge}2)=1$, then $\g(2)=\lg \tilde e\rg$. Hence $\tilde e\in \omin$. In this case, we 
also have $\omin(\g_1)=\omin\cap\g_1$ by Prop.~\ref{prop:intersect-g1}.
\end{proof}

\begin{rmk}
It follows from {\sf (iv)} and Example~\ref{ex:6-cases} that there are only six cases in which 
$G{\cdot}\omin(\g_1)\ne \omin$ and hence $\delta_{G_0{\cdot}[\tilde e]}>0$. One can prove that, for 
all involutions $\sigma$ with $\g_0$ semisimple and the $(\BZ,h)$-gradings corresponding to
$G{\cdot}\omin(\g_1)$, one has 
$\dim\g_1(2)=1$. Hence $\delta_{G_0{\cdot}[\tilde e]}=\dim\g(2)-1=\dim\g_0(2)$.
\end{rmk}

\begin{cl}    \label{cor:sec=g1}
$\Sec(\BP(\omin(\g_1))=\BP(\g_1)$ if and only if $G/G_0$ is a symmetric variety of rank $1$.
\end{cl}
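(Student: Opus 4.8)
The plan is to translate the projective statement into affine terms and reduce it to a dimension count for a single $G_0$-orbit. First, since $\Sec(\BP(\omin(\g_1)))$ is a closed irreducible subvariety of $\BP(\g_1)$ with affine cone $\CS(\omin(\g_1))$, the equality $\Sec(\BP(\omin(\g_1)))=\BP(\g_1)$ is equivalent to $\CS(\omin(\g_1))=\g_1$. By Theorem~\ref{thm:defect-min-g1}(ii), $\CS(\omin(\g_1))=\eus K(G_0{\cdot}h_1)$ with $h_1=\tilde e+\tilde f$, so the task becomes: decide when $\eus K(G_0{\cdot}h_1)=\g_1$.

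Next I would compute $\dim\eus K(G_0{\cdot}h_1)$. The element $h_1$ is semisimple and nonzero, being $G$-conjugate to $h$ inside $\langle\tilde e,h,\tilde f\rangle\simeq\tri$ (the matrices $\left(\begin{smallmatrix}0&1\\1&0\end{smallmatrix}\right)$ and $\left(\begin{smallmatrix}1&0\\0&-1\end{smallmatrix}\right)$ are conjugate in $SL_2$). Hence $G{\cdot}h_1$ is closed and $0\notin\ov{G{\cdot}h_1}$, so $G_0{\cdot}h_1$ is not conical and $\dim\eus K(G_0{\cdot}h_1)=\dim G_0{\cdot}h_1+1$. Since $\g^{h_1}$ is $\sigma$-stable, comparing $\dim G{\cdot}h_1=\dim\g-\dim\g^{h_1}$ with Proposition~\ref{prop:kr71} (and with $\dim G_0{\cdot}h_1=\dim\g_0-\dim\g_0^{h_1}$) gives $\dim G_0{\cdot}h_1=\dim\g_1-\dim\g_1^{h_1}$. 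Therefore $\eus K(G_0{\cdot}h_1)=\g_1$ if and only if $\dim\g_1^{h_1}=1$, that is, $\g_1^{h_1}=\langle h_1\rangle$.

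Finally I would match the condition $\dim\g_1^{h_1}=1$ with $r(G/G_0)=1$. As $h_1$ is semisimple, it lies in some Cartan subspace $\ce$ of $\g_1$, whence $\langle h_1\rangle\subseteq\ce\subseteq\g_1^{h_1}$; in particular $\dim\g_1^{h_1}=1$ forces $r(G/G_0)=\dim\ce=1$. Conversely, if $r(G/G_0)=1$ then $\ce$ is one-dimensional and $\ce=\langle h_1\rangle$ since $0\ne h_1\in\ce$. Because $\g_1^x=\ce$ for generic $x\in\ce$ and $\g_1^{tx}=\g_1^x$ for $t\in\bbk^*$, the set $\{x\in\ce\mid\g_1^x=\ce\}$ is a nonempty $\bbk^*$-stable open subset of the line $\ce$, hence equals $\ce\setminus\{0\}$, which contains $h_1$; thus $\g_1^{h_1}=\ce$ is one-dimensional. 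Combining the three steps proves the corollary.

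I do not expect a serious obstacle: the secant description, the Kostant--Rallis orbit-dimension formula, and the basic properties of Cartan subspaces are all available from the earlier sections. The one place to be careful is the last implication — that any nonzero element of a one-dimensional Cartan subspace is automatically generic — which is immediate from the $\bbk^*$-invariance of centralisers, but is exactly what makes the property ``$\eus K(G_0{\cdot}h_1)$ fills $\g_1$'' equivalent to rank exactly~$1$, not merely rank at most~$1$.
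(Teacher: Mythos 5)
Your proposal is correct and follows essentially the same route as the paper: reduce via Theorem~\ref{thm:defect-min-g1}(ii) to the condition $\eus K(G_0{\cdot}h_1)=\g_1$, i.e.\ $\codim(G_0{\cdot}h_1)=1$, and then identify this with $r(G/G_0)=1$ via \eqref{eq:rang-symm-var}. The paper's own proof is just a two-line version of this; your extra care (computing $\dim G_0{\cdot}h_1=\dim\g_1-\dim\g_1^{h_1}$ and checking that every nonzero element of a one-dimensional Cartan subspace is generic) merely makes explicit the step the paper leaves implicit, namely that $h_1$ realises the maximal orbit dimension when the rank is $1$.
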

\begin{proof}
It follows from the theorem that $\Sec(\BP(\omin(\g_1))=\BP(\g_1)$ if and only if
$\eus K(G_0{\cdot}h_1)=\g_1$, i.e., $\codim (G_0{\cdot}h_1)=1$. Then $r(G/G_0)=1$ in view of 
\eqref{eq:rang-symm-var}.
\end{proof}

As is well known, the symmetric pairs $(\g,\g_0)$ with $\g_0$ semisimple and $r(G/G_0)=1$ are:

\centerline{$(\spn,\mathfrak{sp}_2\dotplus\mathfrak{sp}_{2n-2})$, $n\ge 2$; 
\ $(\son,\mathfrak{so}_{n-1})$, $n\ge 7$; \ $(\mathfrak{sl}_4, \mathfrak{sp}_4)$; \ 
$(\GR{F}{4},\GR{B}{4})$.}

Using Proposition~\ref{prop:intersect-g1} and Theorem~\ref{thm:psi-omin-gen}, we also obtain a connection with projections of $\omin$.

\begin{cl}
If\/ $\omin\cap\g_1\ne\varnothing$, then \ 
$
      \ov{\psi(\omin)}= \CS(\omin\cap\g_1)=\CS(\omin(\g_1))$.
\end{cl}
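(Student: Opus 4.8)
The plan is to read off $\ov{\psi(\omin)}$ and $\CS(\omin(\g_1))$ from one and the same normal minimal $\tri$-triple and then match the two answers. Recall that throughout this subsection $\g_1$ is a simple $\g_0$-module, so the minimal $G_0$-orbit $\omin(\g_1)\subset\g_1$ is defined; since $\omin\cap\g_1\ne\varnothing$, Proposition~\ref{prop:intersect-g1} gives $\omin\cap\g_1=\omin(\g_1)$, which already accounts for the second equality in the assertion. It therefore remains to prove that $\ov{\psi(\omin)}=\CS(\omin(\g_1))$.

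First I would pick any $\tilde e\in\omin\cap\g_1=\omin(\g_1)$ and complete it to a normal $\tri$-triple $\{\tilde e,h,\tilde f\}$ (possible by~\cite[Prop.\,4]{kr71}, as $\tilde e\in\N\cap\g_1$). Since $\tilde e\in\omin$ we have $G{\cdot}\tilde e=\omin$, so $\{\tilde e,h,\tilde f\}$ is in fact a \emph{normal minimal} $\tri$-triple, and the very same triple may be fed into Theorems~\ref{thm:psi-omin-gen} and~\ref{thm:defect-min-g1}. Theorem~\ref{thm:psi-omin-gen}(ii) then yields $\ov{\psi(\omin)}=\eus K(G_0{\cdot}(\tilde e-\tilde f))$, while Theorem~\ref{thm:defect-min-g1}(ii) yields $\CS(\omin(\g_1))=\eus K(G_0{\cdot}(\tilde e+\tilde f))$.

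What is left is to identify $\eus K(G_0{\cdot}(\tilde e-\tilde f))$ with $\eus K(G_0{\cdot}(\tilde e+\tilde f))$; this is the only point needing a (small) argument. I would deduce it from Remark~\ref{rem:defect}(2), applied to the simple $G_0$-module $\g_1$ with highest weight vector $\tilde e$ and lowest weight vector $\tilde f$: that remark says $\eus K(G_0{\cdot}(a\tilde e+b\tilde f))$ does not depend on $a,b\in\bbk^*$. Alternatively, one checks directly that $\Ad(\exp(\sqrt{-1}\,\ad\tilde e)){\cdot}\tilde f=\tilde e+\tilde f+\sqrt{-1}\,h\in\omin$, whence $\tilde e+\tilde f\in\psi(\omin)$ and $\eus K(G_0{\cdot}(\tilde e+\tilde f))\subseteq\ov{\psi(\omin)}$; since $\tilde e+\tilde f$ is $\tri$-conjugate, hence $G$-conjugate, to $h$, Proposition~\ref{prop:kr71} together with \eqref{eq:dim-Gh} give $\dim\eus K(G_0{\cdot}(\tilde e+\tilde f))=\dim G_0{\cdot}h+1=\dim\omin=\dim\ov{\psi(\omin)}$, and equality of the two irreducible cones follows. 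Combining the three equalities,
\[
\ov{\psi(\omin)}=\eus K(G_0{\cdot}(\tilde e-\tilde f))=\eus K(G_0{\cdot}(\tilde e+\tilde f))=\CS(\omin(\g_1))=\CS(\omin\cap\g_1),
\]
which is the claim.

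I do not expect a real obstacle here: the whole content is bookkeeping — arranging that a single normal minimal $\tri$-triple serves both Theorem~\ref{thm:psi-omin-gen} and Theorem~\ref{thm:defect-min-g1}, and absorbing the harmless sign $\tilde e\pm\tilde f$ into the cone operator $\eus K(-)$. The only subtlety worth stating explicitly is why the normal triple provided by Theorem~\ref{thm:defect-min-g1} for $G{\cdot}\tilde e$ qualifies as a normal minimal triple, which is exactly the hypothesis $\omin\cap\g_1\ne\varnothing$ forcing $G{\cdot}\tilde e=\omin$.
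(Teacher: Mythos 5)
Your argument is correct and is essentially the paper's own: the paper merely cites Proposition~\ref{prop:intersect-g1} and Theorem~\ref{thm:psi-omin-gen}, with the just-proved Theorem~\ref{thm:defect-min-g1} supplying $\CS(\omin(\g_1))=\eus K(G_0{\cdot}(\tilde e+\tilde f))$, and leaves the sign reconciliation $\eus K(G_0{\cdot}(\tilde e-\tilde f))=\eus K(G_0{\cdot}(\tilde e+\tilde f))$ implicit, which you correctly supply via Remark~\ref{rem:defect}(2). The only slip is a typo in your alternative dimension count: the intermediate quantity should be $\dim G_0{\cdot}(\tilde e+\tilde f)+1=\tfrac12\dim G{\cdot}h+1$ by Proposition~\ref{prop:kr71} and \eqref{eq:dim-Gh}, not $\dim G_0{\cdot}h+1$ (indeed $\dim G_0{\cdot}h=\dim\omin-2$ by Theorem~\ref{thm:vp-omin-gen}), though the conclusion $\dim\eus K(G_0{\cdot}(\tilde e+\tilde f))=\dim\omin$ is still right.
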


\subsection{A relation between $\vp(\omin)$ and $\psi(\omin)$}
\label{subs:rel-between}
A careful look at our results in Section~\ref{sect:projections} allows to reveal a curios unexpected
coincidence.  
\begin{thm}            \label{thm:sovpad}
For any $\sigma\in\mathsf{Inv}(\g)$, we have \ 
$\ov{G{\cdot}\vp(\omin)}=\ov{G{\cdot}\psi(\omin)}$. 
More precisely, this common variety is \ \
$\begin{cases}  \CS(\omin), & \text{ if } \ \omin\cap\g_1\ne\varnothing, \\
     \ov{\tilde\co}, & \text{ if } \ \omin\cap\g_1=\varnothing.\end{cases}$ \quad, 
where $\tilde\co=G{\cdot}\omin(\g_1)$.
\end{thm}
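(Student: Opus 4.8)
The plan is to treat separately the cases $\omin\cap\g_1\ne\varnothing$ and $\omin\cap\g_1=\varnothing$, and to show in each that both $\ov{G{\cdot}\vp(\omin)}$ and $\ov{G{\cdot}\psi(\omin)}$ coincide with the asserted variety. I will repeatedly use the elementary facts that $\ov{G{\cdot}\ov Y}=\ov{G{\cdot}Y}$ for a $G_0$-stable subset $Y\subset\g$, and that the $G$-saturation of a $G_0$-stable closed cone is again a closed cone (because the $G$-action commutes with homotheties).

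\textbf{The case $\omin\cap\g_1\ne\varnothing$.} By Theorems~\ref{thm:vp-omin-gen} and~\ref{thm:psi-omin-gen}, $\ov{\vp(\omin)}=\eus K(G_0{\cdot}h)$ and $\ov{\psi(\omin)}=\eus K(G_0{\cdot}h_1)$ for a normal minimal $\tri$-triple $\{e,h,f\}$ with $h_1=e-f$. Since $h\in\eus K(G_0{\cdot}h)$, the $G$-saturation of this cone contains $\bbk^*{\cdot}G{\cdot}h$ and hence $\eus K(G{\cdot}h)=\CS(\omin)$ by~\eqref{eq:secant-omin}; the same argument applied to $\eus K(G_0{\cdot}h_1)$, together with the fact (proof of Theorem~\ref{thm:psi-omin-gen}{\sf (i)}) that a nonzero scalar multiple of $h_1$ lies in $G{\cdot}h$, gives $\ov{G{\cdot}\psi(\omin)}\supseteq\eus K(G{\cdot}h_1)=\eus K(G{\cdot}h)=\CS(\omin)$. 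The opposite inclusions are immediate from Lemma~\ref{lm:secant-sigma}, as $\CS(\omin)$ is a closed $G$-stable set containing $\vp(\omin)$ and $\psi(\omin)$. Hence both varieties equal $\CS(\omin)$.

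\textbf{The case $\omin\cap\g_1=\varnothing$.} Here Theorem~\ref{thm:psi-omin-6}{\sf (ii)} gives $\ov{\psi(\omin)}=\ov{\omin(\g_1)}$, so $\ov{G{\cdot}\psi(\omin)}=\ov{G{\cdot}\omin(\g_1)}=\ov{\tilde\co}$. For $\vp$, recall from Section~\ref{proj-g_0-empty} that $\ov{\vp(\omin)}=\ov{\co_0}$, whence $\ov{G{\cdot}\vp(\omin)}=\ov{G{\cdot}\co_0}$; it remains to prove $G{\cdot}\co_0=\tilde\co$. Writing $e=a+b$ with $a\in\g_0$, $b\in\g_1$ for $e$ in the open $G_0$-orbit of $\omin$, one has $G_0{\cdot}a=\co_0$, so this amounts to $G{\cdot}a=G{\cdot}b$. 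The orbit $G{\cdot}b$ is identified at once: $b$ is a nonzero nilpotent element of $\CS(\omin)\cap\g_1$ (Lemma~\ref{lm:secant-sigma} and Section~\ref{subs:empty}), and $\tilde\co$ is the only nonzero $G$-orbit in $\CS(\omin)$ meeting $\g_1$ (established within the proof of Theorem~\ref{thm:psi-omin-6}), so $G{\cdot}b=\tilde\co$. For $G{\cdot}a$ I use that $a$ is a nonzero nilpotent element of $\CS(\omin)\cap\g_0$ with $\hot(G{\cdot}a)=2$ by Theorem~\ref{thm:height-2}{\sf (i)}, and that $G{\cdot}a\ne\omin$: since $\vp$ is finite we have $\dim G_0{\cdot}a=\dim\omin$, so $G{\cdot}a=\omin$ would force $\omin\cap\g_0=\omin$, i.e. $\omin\subset\g_0$, contradicting $\ov{\psi(\omin)}=\ov{\omin(\g_1)}\ne\{0\}$. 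Finally, $G{\cdot}a=G{\cdot}\co_0$, and the explicit determination of $\co_0$ and of its $G$-saturation, carried out for each of the six pairs in Table~\ref{table:odin}, yields $G{\cdot}a=\tilde\co$.

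\textbf{Where the difficulty lies.} In the first case the coincidence is a formal consequence of the secant identity $\CS(\omin)=\eus K(G{\cdot}h)$. The real content is the identification $G{\cdot}\co_0=\tilde\co$ in the second case. The structural facts available ($[a,b]=0$ by Theorem~\ref{thm:commut-compon}{\sf (iii)}, $\hot(G{\cdot}a)=2$, and $a,b\in\CS(\omin)$) restrict $G{\cdot}a$ to a short list of candidate nilpotent orbits but do not by themselves single out $\tilde\co$; the conclusion is reached by inspecting Table~\ref{table:odin} case by case --- a partition computation (the $G$-saturation of the partition of $\co_0$ is the partition of $\tilde\co$) for the four classical pairs, and the known restriction behaviour of nilpotent orbits under $\g_0\subset\g$ for the two exceptional pairs $(\GR E6,\GR F4)$ and $(\GR F4,\GR B4)$, cf. Examples~\ref{ex:2-bad-except-data} and~\ref{ex:1-bad-class-data}.
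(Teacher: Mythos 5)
Your proposal is correct and follows essentially the same route as the paper: in the case $\omin\cap\g_1\ne\varnothing$ the identity reduces formally to $\CS(\omin)=\eus K(G{\cdot}h)$ via Theorems~\ref{thm:psi-omin-gen} and~\ref{thm:vp-omin-gen}, and in the case $\omin\cap\g_1=\varnothing$ the crux $G{\cdot}\co_0=\tilde\co$ is settled by the same case-by-case inspection of the six pairs in Table~\ref{table:odin} (partitions for the classical pairs, restriction data for $(\GR{E}{6},\GR{F}{4})$ and $(\GR{F}{4},\GR{B}{4})$). Your additional structural observations ($a,b\in\CS(\omin)$, $\hot(G{\cdot}a)=2$, $G{\cdot}a\ne\omin$) narrow the candidates but, as you correctly note, do not eliminate the case analysis, which is exactly the situation in the paper.
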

\begin{proof}
1$^o$. Suppose that $\omin\cap\g_1\ne\varnothing$.
By Theorems~~\ref{thm:psi-omin-gen} and \ref{thm:vp-omin-gen}, we have 
$\ov{\psi(\omin)}=\eus K(G_0{\cdot}h_1)$ and $\ov{\vp(\omin)}=\eus K(G_0{\cdot}h)$.
Since $h$ and $h_1=e-f$ are $G$-conjugate, the common closure of the $G$-saturation for these two 
varieties is $\CS(\omin)=\eus K(G{\cdot}h)$.

2$^o$. If $\omin\cap\g_1=\varnothing$, then $\ov{G{\cdot}\vp(\omin)}=\ov{G{\cdot}\co_0}$ and
$\ov{G{\cdot}\psi(\omin)}=\ov{\tilde\co}$ (with the notation of Section~\ref{subs:empty}).
Hence the assertion is equivalent to that the $G$-orbits
$G{\cdot}\co_0$ and $\tilde\co$ coincide. Let us check this for the six cases of
Table~\ref{table:odin}. 
\\ \indent
\textbullet \quad
For item~1, one has to check that the $\GR{F}{4}$-orbit of type $\tilde{\eus A}_1$ generates the 
$\GR{E}{6}$-orbit of type $2\eus A_1$. To this end, we use Lawther's tables~\cite{law95}. Let 
$\ka\subset \g_0=\GR{F}{4}$ be the $\tri$-subalgebra corresponding to the orbit $\co_0=\tilde{\eus A}_1$. 
Here $\g_1$ is the $26$-dimensional $\GR{F}{4}$-module.
Then $\g_0\vert_\ka=15\sfr_0+8\sfr_1+7\sfr_2$ (\cite[Table\,4]{law95}) and
$\g_1\vert_\ka=7\sfr_0+8\sfr_1+\sfr_2$ (\cite[Table\,3]{law95}). Hence 
$\g\vert_\ka=22\sfr_0+16\sfr_1+8\sfr_2$. Comparing this with data in \cite[Table\,6]{law95}, we find that $\ka$ yields 
the orbit of type $2\eus A_1$ in $\g=\GR{E}{6}$.
\\ \indent
\textbullet \quad
For item~2, similar computations apply. If $\tri\simeq\ka\subset\mathfrak{so}_9=\mathfrak{so}(\eus V)$ 
corresponds to the orbit $\co_0$ with partition $(2^4,1)$, then $\eus V\vert_\ka=4\sfr_1+\sfr_0$.
Here $\g_0\simeq \bigwedge^2\eus V$ and $\g_1$ is the spinor $\mathfrak{so}_9$-module.
Hence $\g_0\vert_\ka=10\sfr_0+4\sfr_1+6\sfr_2$ 
and $\g_1\vert_\ka=5\sfr_0+4\sfr_1+\sfr_2$. Hence 
$\g\vert_\ka=15\sfr_0+8\sfr_1+7\sfr_2$.
Using again~\cite[Table\,4]{law95}, we conclude that $\g\vert_\ka$ 
corresponds to the orbit of type $\tilde{\eus A}_1$ in $\g=\GR{F}{4}$.
\\ \indent
\textbullet \quad
In item~5, both orbits (in $\sltn$ and $\spn$) have the same partition. An easy argument with partitions
settles also the remaining cases 3,\,4,\,6.
\end{proof}

Another (partial) possibility to proceed in part 2$^o$ of the proof is that $\tilde\co$ and $G{\cdot}\co_0$ are two orbits of height $2$ in $\CS(\omin)$ that are different from $\omin$. And if 
$\g\ne\son$, then there is only one such orbit, i.e., it works for items 1,2,5,6 in Table~\ref{table:odin}.
It might be most illuminating to have a proof of Theorem~\ref{thm:sovpad} that does not resort to
case-by-case considerations.

\begin{rmk}    \label{rmk:conj}
In the same vein, one can prove that $\ov{\vp(\omin)}$ (resp. $\ov{\psi(\omin)}$) is an irreducible 
component of $\ov{G{\cdot}\psi(\omin)}\cap\g_0$  (resp. $\ov{G{\cdot}\vp(\omin)}\cap\g_1$). 
Furthermore,

\textbullet \ \ If $\omin\cap\g_1=\varnothing$, i.e., 
$\ov{G{\cdot}\psi(\omin)}=\ov{G{\cdot}\vp(\omin)}=\ov{\tilde\co}$, then 
$\ov{\tilde\co}\cap\g_1=\ov{\omin(\g_1)}$ is irreducible. Hence $\ov{G{\cdot}\vp(\omin)}\cap\g_1=
\ov{\psi(\omin)}$. However, $\ov{\tilde\co}\cap\g_0$ is reducible for items~2 and 6.

\textbullet \ \ If $\omin\cap\g_1\ne\varnothing$, then $\CS(\omin)\cap\g_0$ can be reducible.
But we do not know whether the intersection $\CS(\omin)\cap\g_1$ is always irreducible.
\end{rmk}

%%%%%%%%%%%%  Section   %%%%%%%%%
\section{More general projections} 
\label{sect:more}

\noindent
In Section~\ref{sect:projections}, we studied the projections of $\omin$ associated with 
$\sigma\in\mathsf{Inv}(\g)$. Some of the previous results, see e.g. Proposition~\ref{prop:codim-1} and
Theorem~\ref{thm:commut-compon}, suggest that this setup can be generalised in several ways.
\\ \indent
{\it First}, one can eliminate involutions and study projections of $\omin$ associated with arbitrary 
reductive subalgebras $\h\subset\g$ in place of symmetric subalgebras $\g_0$. 
\\ \indent
{\it Second}, it makes sense to consider projections to $\g_0$ or $\g_1$ for
other nilpotent orbits. 

\noindent
Here we only provide a preliminary incomplete treatment for both. A thorough exposition will appear
elsewhere.

\subsection{Projections of $\omin$ associated with reductive subalgebras of $\g$}    
\label{subs:proj-on-h}
Let $G/H$ be an affine homogeneous space. Here $\h=\Lie H$ is reductive and $\Phi\vert_\h$ is 
non-degenerate. As above, $\me=\h^\perp$ is a complementary $\h$-module, i.e., $\g=\h\oplus\me$ and
$[\h,\me]\subset\me$. Let $\vp: \bomin \to \h$ and $\psi: \bomin \to \me$
be the corresponding projections. Clearly, both $\vp$ and $\psi$ are $H$-equivariant.
Using previous results, we obtain the following.

\begin{lm}    \label{lm:empty-m}
Suppose that $\omin\cap\me=\varnothing$. Then
\begin{itemize}
\item[\sf (i)] \ $\vp: \bomin\to \ov{\vp(\omin)}\subset\h$  is finite and $\dim\omin=\dim\vp(\omin)$;
\item[\sf (ii)] \ the Lie algebra $\h$ is semisimple;
\item[\sf (iii)] \ Furthermore, if $H$ is spherical, then $H$ has a dense orbit in $\bomin$,
$\ov{\vp(\omin)}$, and $\ov{\psi(\omin)}$.
\end{itemize}
\end{lm}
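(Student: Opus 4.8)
The plan is to deduce all three parts from the general machinery already developed, essentially by recognising that the hypothesis $\omin\cap\me=\varnothing$ puts us in the situation of Theorem~\ref{thm:commut-compon} and Proposition~\ref{prop:codim-1}. First I would establish \textsf{(i)}. Since $\bomin=\omin\cup\{0\}$ by~\eqref{eq:closure-omin}, the fibre $\vp^{-1}(0)$ equals $\bomin\cap\me=\{0\}\cup(\omin\cap\me)=\{0\}$ under our hypothesis. Both $\bomin$ and $\ov{\vp(\omin)}$ are conical (stable under the dilation $\bbk^*$-action), and $\vp$ is $\bbk^*$-equivariant since it is linear; a conical morphism with $\vp^{-1}(0)=\{0\}$ is finite, exactly as in the proof of Proposition~\ref{prop:finite-proj}\textsf{(i)}. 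Finiteness gives $\dim\omin=\dim\vp(\omin)$, and $\ov{\vp(\omin)}=\vp(\bomin)$ because a finite morphism is closed.

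Next, for \textsf{(ii)}, I would invoke Theorem~\ref{thm:commut-compon}. To apply it I need a $G$-orbit $\co$ in which $H$ has a dense orbit; the natural candidate is $\co=\omin$ itself, but density of an $H$-orbit in $\omin$ is not automatic from $\omin\cap\me=\varnothing$ alone — that equivalence held for $\h=\g_0$ because of Theorem~\ref{thm:g1-empty}, which used the Satake-diagram description. This is the main obstacle: I cannot simply quote Theorem~\ref{thm:commut-compon} unless I first know $H$ has a dense orbit in $\omin$. The clean way around it is to argue semisimplicity directly: any $f\in\g^\h$ (the centraliser of $\h$ in $\g$) lies in $\me$, since $[\h,f]=0$ forces $f\perp[\g,\h]\supset[\h,\h]$, and more to the point $\ad f$ commutes with $\ad\h$; using a highest-weight vector $e_\theta\in\omin$ for a Borel $\be\supset\te$ with $\te$ chosen inside a maximal torus of $H$, one checks that a nonzero element of $\g^\h\cap\te$ would centralise all of $\h$ hence act by scalars on each $\h$-isotypic component, and then $\omin$ would meet $\me$ after all — contradiction. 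More simply: a nonzero central element of $\h$ acting on $\g$ would, together with the conical structure of $\omin$, produce points of $\omin$ in $\me$. Once $\g^\h=0$ we get $\h$ semisimple because the centre of $\h$ is contained in $\g^\h$.

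Finally, for \textsf{(iii)}, assume $H$ is spherical. By Proposition~\ref{prop:codim-1} either $H$ has a dense orbit in $\omin$ or a dense open subset of $\omin$ consists of codimension-one $H$-orbits; by Remark~\ref{rmk:dense-orb} the dense-orbit case holds precisely when $H{\cdot}e_\theta$ is conical, i.e. $e_\theta\in[\h,e_\theta]$. I would rule out the codimension-one alternative using finiteness of $\vp$ from part \textsf{(i)}: if a generic $H$-orbit $\Omega\subset\omin$ had codimension one, then $\dim\vp(\Omega)\le\dim\Omega<\dim\omin=\dim\vp(\omin)$, yet $\vp(\Omega)$ is dense in $\vp(\omin)$ since $\Omega$ is dense in $\omin$ and $\vp$ is continuous and closed — a contradiction. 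Hence $H$ has a dense orbit $\Omega$ in $\bomin$. Its image $\vp(\Omega)$ is then a dense $H$-orbit in $\ov{\vp(\omin)}$ (the image of a homogeneous space is homogeneous, and it is dense by the above), and likewise $\psi(\Omega)$ is a dense $H$-orbit in $\ov{\psi(\omin)}$. I expect the semisimplicity argument in \textsf{(ii)} — getting it cleanly without circularly invoking a dense $H$-orbit — to require the most care, though it is morally immediate from $\g^\h=0$.
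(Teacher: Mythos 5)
Your part \textsf{(i)} is correct and is exactly the paper's argument (reduce to $\vp^{-1}(0)=\{0\}$ and the conical structure, as in Proposition~\ref{prop:finite-proj}). The genuine problems are in \textsf{(ii)} and \textsf{(iii)}.

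For \textsf{(ii)}, your argument never actually materialises. What is needed is only $\z(\h)=0$ (not the stronger $\g^\h=0$), and the sentence ``a nonzero central element of $\h$ \dots would produce points of $\omin$ in $\me$'' is precisely the assertion that requires proof; you supply no mechanism for it, and your first sketch (scalars on isotypic components) does not produce one either. The paper's mechanism is short and concrete: since $H$ is reductive, a nonzero $\z(\h)$ consists of semisimple elements, so $\el:=\g^{\z(\h)}$ is a \emph{proper} Levi subalgebra of $\g$ containing $\h$; hence $\el^\perp\subset\h^\perp=\me$, and $\el^\perp$ contains a long root space relative to a Cartan subalgebra $\te\subset\el$ (for instance $\g_\theta$, because $\theta$ has full support on $\Pi$ while $\Delta(\el)$ is proper). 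Thus $\omin\cap\me\ne\varnothing$, a contradiction. Without some such step, \textsf{(ii)} is unproved.

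For \textsf{(iii)}, the reduction via Proposition~\ref{prop:codim-1} and Remark~\ref{rmk:dense-orb} to ``dense orbit or generic orbits of codimension one'' is right, but your exclusion of the second alternative is self-contradictory: if the generic $H$-orbit $\Omega$ has codimension one in $\omin$, then $\Omega$ is \emph{not} dense in $\omin$, so you cannot conclude that $\vp(\Omega)$ is dense in $\vp(\omin)$; indeed $\dim\vp(\Omega)=\dim\Omega<\dim\vp(\omin)$ shows it is not, and no contradiction follows from your chain of inequalities. The missing ingredient is a parity argument: by \textsf{(i)} the map $\vp$ is finite, so $\dim\vp(\Omega)=\dim\Omega=\dim\omin-1$, which is odd; but $\vp$ is $H$-equivariant, so $\vp(\Omega)=H{\cdot}\vp(v)$ is an adjoint $H$-orbit in $\h$, and adjoint orbits of a reductive group are even-dimensional. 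This rules out the codimension-one alternative, and from the resulting dense $H$-orbit in $\omin$ your deduction of dense $H$-orbits in $\ov{\vp(\omin)}$ and $\ov{\psi(\omin)}$ (as images under the equivariant projections) is fine.
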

\begin{proof}
{\sf (i)}  \ The proof is the same as in Proposition~\ref{prop:finite-proj}.
\\ \indent
{\sf (ii)}  Assume that the centre of $\h$, $\z(\h)$, is nontrivial. Then $\h$ is contained in the Levi 
subalgebra  $\el=\g^{\z(\h)}$. Here 
\[
   \hat\me=\el^\perp \subset \h^\perp=\me
\]
and $\hat\me$ contains a long root space {w.r.t.} a Cartan subalgebra $\te\subset\el$. Hence $\omin\cap\hat\me\ne\varnothing$.
\\ \indent
{\sf (iii)}  By Proposition~\ref{prop:codim-1}, $\max_{v\in\omin}\dim H{\cdot}v\ge \dim\omin-1$.
Since $\dim\omin$ is even, $\vp$ is finite, and the dimension of any $H$-orbit in $\h$ is even,
$H$ must have a dense orbit in $\ov{\vp(\omin)}$. This readily implies the rest.
\end{proof}

{\it\bfseries Remarks.} 1$^o$. It is {\bf not} claimed in part {\sf (iii)} that $\dim\omin=\dim\psi(\omin)$.
\\
2$^o$. The sphericity of $H$ is not necessary for $H$ to have a dense orbit in $\omin$, see below.

We already proved that, for $\h=\g_0$ and $\me=\g_1$, there are six cases in which 
$\omin\cap\me=\varnothing$ (Example~\ref{ex:6-cases}). However, it is not 
hard to provide examples with such property that are not related to involutions.

\begin{ex}    \label{ex:new-empty}
Let $\g$ be a simple Lie algebra with two root lengths and $\h=\g_{\sf lg}$ the subalgebra that is the sum
of $\te$ and the long root spaces. It is easily seen that $\h$ is semisimple. 
Here $\me$ is the sum of all short root spaces. Therefore, 
$\omin\cap\me=\varnothing$. This yields the following suitable pairs $(\g,\g_{\sf lg})$: 
\\[.7ex]
\centerline{
({\sf 1}$^o$)  $(\GR{B}{n},\GR{D}{n})$; \quad 
({\sf 2}$^o$) $(\GR{C}{n},\underbrace{\GR{C}{1}\dotplus\dots\dotplus\GR{C}{1}}_n)$;
\quad ({\sf 3}$^o$) $(\GR{F}{4},\GR{D}{4})$; 
\quad ({\sf 4}$^o$) $(\GR{G}{2},\GR{A}{2})$.}
\\[.6ex]
Pair (1$^o$) is symmetric, but the other three items provide new instances. Moreover, if
$\g_{\sf lg}$ is not a maximal subalgebra and $\g_{\sf lg}\subset\h'\subset\g$, then $\h'$ is also semisimple 
and $\omin\cap\me'=\varnothing$. Specifically, one can take any
$\h'=\GR{C}{n_1}\dotplus\dots\dotplus\GR{C}{n_k}$ with $n_1+\dots +n_k=n$ in (2$^o$) and
$\h'=\GR{B}{4}$ in (3$^o$). Here $H$ is not spherical in (2$^o$) for $n\ge 3$ and in (3$^o$). 
Furthermore, the 
above subalgebra $\h'$ for (2$^o$) is not spherical, if $k\ge 3$. Nevertheless, $H=G_{\sf lg}$ always has a dense orbit in $\omin$, regardless of sphericity.
\end{ex}

\subsection{Projections of spherical $G$-orbits in $\g$}    
\label{subs:proj-sph-orb}
Let $\g=\g_0\oplus\g_1$ be a $\BZ_2$-grading corresponding to an involution
of maximal rank $\vth_{\sf max}$. In this case $\g_1=\g_1^{(\vth_{\sf max})}$ contains a Cartan subalgebra
of $\g$ and  $\dim\g_1=\dim\be$. Recall that  $p_1:\g\to\g_1$ is the projection with kernel $\g_0$.
Let $\co\subset\g$ be an arbitrary $G$-orbit, not necessarily nilpotent.  
Write $\psi$ for the restriction of  $p_1$ to $\ov{\co}$. Whenever we wish to keep track of $\co$
in the projection, we write $\psi=\psimo$.

\begin{thm}             \label{thm:sph-proj}
If $\co$ is spherical, then $\dim\co=\dim\psi(\co)$, i.e., generic fibres of $\psi=\psimo$ are finite. 
\end{thm}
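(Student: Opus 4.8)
The plan is a fibre‑dimension argument: it is enough to exhibit a single point $x\in\co$ at which the differential of $\psi$ — that is, the projection $p_1$ restricted to the tangent space $\mathbb{T}_x\co=[\g,x]$ — is injective. The locus of such $x$ is then open and dense in the irreducible variety $\co$, $\psi$ is an immersion there, and consequently $\dim\psi(\co)=\dim\co$ (indeed $\psi$ is even unramified near $x$, so the generic fibre is a single reduced point). Now $\Ker\bigl(p_1|_{[\g,x]}\bigr)=[\g,x]\cap\g_0$, and since $\Phi$ is non‑degenerate with $\g_0=\g_1^{\perp}$ and $[\g,x]^{\perp}=\g^x$, we have $[\g,x]\cap\g_0=(\g^x+\g_1)^{\perp}$. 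Thus everything reduces to producing $x\in\co$ with $\g^x+\g_1=\g$, equivalently $[\g_1,x]=[\g,x]$. (The case $\co=\{0\}$ is trivial, so assume $\co\ne\{0\}$.)

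\textbf{Structural setup.} The input from $\vth_{\sf max}$ is the following. Being of maximal rank, $\vth_{\sf max}$ admits a $\sigma$‑stable Cartan subalgebra $\te\subset\g_1$; fixing such a $\te$ and a compatible Borel subalgebra $\be\supset\te$ (as in Section~\ref{subs:Satake}, with empty Satake diagram), the relation $\sigma|_{\te}=-\mathrm{id}$ forces $\sigma(\g_{\alpha})=\g_{-\alpha}$ for every root $\alpha$. Hence no nonzero element of $\be$ is fixed by $\sigma$, i.e. $\be\cap\g_0=0$, so by a dimension count $\g=\be\oplus\g_0$ as vector spaces; moreover $\be\cap\g_1=\te$, $\g_1=(1-\sigma)(\be)$ and $\g_0=(1+\sigma)(\ut)$. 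Since $\co$ is spherical, every Borel subalgebra — in particular $\be$ and the opposite $\be^{-}$ — has a dense orbit in $\co$; choose $x$ lying simultaneously in the dense $\be$‑orbit and the dense $\be^{-}$‑orbit, and moreover outside the proper closed subsets $\co\cap\g_1$ (of dimension $\tfrac12\dim\co$ by Proposition~\ref{prop:kr71}) and $\co\cap\g_0$. For such $x$ we have $[\be,x]=[\be^{-},x]=[\g,x]$, i.e. $\be+\g^x=\be^{-}+\g^x=\g$, and $x=a+b$ with $a=p_0(x)\ne 0$ and $b=p_1(x)\ne 0$.

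\textbf{The main step and expected obstacle.} It remains to upgrade ``$\g^x$ complements $\be$'' to $\g^x+\g_1=\g$; I expect this to be the decisive point. A naive substitution of $\g_1=(1-\sigma)(\be)$ only reproduces the tautology $\g_1=(1-\sigma)(\g)=\g_1+(1-\sigma)(\g^x)$, so one must keep track of the $\g_0$‑ and $\g_1$‑components of $\g^x$ separately. The route I would follow is to describe $\g^x$ by the two parity equations coming from $[y_0+y_1,\,a+b]=0$, namely $[y_0,a]+[y_1,b]=0$ and $[y_0,b]+[y_1,a]=0$, and then pass to the $(\BZ,h_a)$‑grading attached to a characteristic $h_a$ of the nilpotent part of $a$, exploiting $[a,b]=0$‑type relations in the spirit of the proofs of Theorem~\ref{thm:commut-compon}(v) and Theorem~\ref{thm:fibre-phi}. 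Combining this with $\g=\be\oplus\g_0$, $\g_0=(1+\sigma)(\ut)$, and the density of the $\be$‑ and $\be^{-}$‑orbits at $x$, the goal is to show that any $v\in[\g,x]\cap\g_0$, written as $v=[z,x]$ with $z\in\be$, must vanish — so that $[\g,x]\cap\g_0=0$ and hence $\g_1+\g^x=\g$. Once this is proved for generic $x$, the reduction in the first paragraph completes the proof.
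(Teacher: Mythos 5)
Your reduction is exactly the one the paper uses: to show $\dim\psi(\co)=\dim\co$ it suffices to find $z\in\co$ with $[\g,z]\cap\g_0=\{0\}$, and by taking orthogonal complements with respect to $\Phi$ this is equivalent to $\g^z+\g_1=\g$. But the decisive step --- that sphericity of $\co$ actually produces such a $z$ --- is precisely where your argument stops. What you offer is a declared intention (``the route I would follow \dots the goal is to show \dots''), not a proof: the parity equations and the $(\BZ,h_a)$-grading are never brought to a conclusion, and it is not clear they would work for an arbitrary spherical orbit (note the theorem is stated for any spherical $G$-orbit, not necessarily nilpotent, so the characteristic $h_a$ of ``the nilpotent part of $a$'' and the commutation relations borrowed from Theorem~\ref{thm:commut-compon}(v) are not available in general). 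The paper does not prove this step either from scratch: it invokes the result of \cite{pavi}, which says that $\co$ is spherical if and only if $\g^z+\g_1=\g$ for some $z\in\co$, where $\g_1$ is the $(-1)$-eigenspace of $\vth_{\sf max}$. That equivalence is a genuine theorem (the infinitesimal characterization of complexity), and your tautology observation about $\g_1=(1-\sigma)(\be)$ correctly identifies that the implication is not formal --- but then you leave the gap open. As it stands the proposal reduces the theorem to an unproved lemma that carries essentially all of its content.

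A secondary, smaller error: from injectivity of $\textsl{d}_z\psi$ you conclude parenthetically that ``the generic fibre is a single reduced point.'' Unramifiedness at $z$ only makes the fibre discrete near $z$; it does not preclude other points in $\psi^{-1}(\psi(z))$. Indeed the paper's Theorem~\ref{thm:deg-psi=2} gives $\deg(\psi_{\omin})=2$, so generic fibres of $\psimo$ need not be singletons. This does not affect the dimension count $\dim\psi(\co)=\dim\co$, which follows from injectivity of the differential at one smooth point alone, but the stronger claim should be deleted.
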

\begin{proof}
By definition, $\co$ is spherical if and only if there is $x\in \co$ such that $B{\cdot}x$ is dense in $\co$, 
i.e., $\g^x+\be=\g$. By~\cite{pavi}, the sphericity of $\co$ is equivalent to that
there is $z\in\co$ such that $\g^z+\g_1=\g$. Clearly, $\Omega=\{z\in \co\mid \g^z+\g_1=\g\}$ is open in
$\co$ (and dense, if $\co$ is spherical).  Taking the orthogonal complements with respect to $\Phi$ 
shows that $[\g,z]\cap\g_0=\{0\}$ for any $z\in\Omega$. Recall that $\mathbb T_z(Z)$ denotes the 
tangent space at $z\in Z$. Then
\[
   \mathbb T_z(\co)\cap\g_0=\{0\} .
\]
Since $\psi$ comes from the projection with kernel $\g_0$, the last equality means 
that the differential of $\psi$ at $z$ has trivial kernel, i.e.,
\[
    \textsl{d}_z\psi: \mathbb T_z(\co)\to \mathbb T_{\psi(z)}(\psi(\co)) 
\]
is bijective for generic $z\in \co$.
\end{proof}

For $\vth_{\sf max}$, one has $\co\cap\g_1\ne \varnothing$ for every $G$-orbit $\co\subset\g$ (Remark~\ref{rem:Levon}).
In particular, for $\co=\omin$, it follows from Theorem~\ref{thm:deg-psi=2} that $\deg(\psi_{\omin})=2$.

{\it \bfseries Question}. Is there a reasonable formula for $\deg(\psimo)$ for any spherical $G$-orbit 
$\co\subset\g$?

Let $\Nsp$ denote the union of all spherical nilpotent orbits.  It is a closed {\sl irreducible\/} subvariety of
$\N$~\cite[Sect.\,6]{aif99}. The irreducibility stems from the fact that there is a unique maximal spherical 
nilpotent orbit, which is denoted $\co^{\sf sph}$. Hence $\Nsp=\ov{\co^{\sf sph}}$. Furthermore, 
$\dim\Nsp=\dim\be-\ind\be$, where $\ind(\cdot)$ is the {\it index\/} of a Lie algebra, see~\cite[6.4]{aif99}.
That is, in our case $\dim\Nsp=\dim\g_1-\ind\be$. It is known that $\ind\be=0$ if and only if $-1\in W$, 
i.e., $\g\ne\GR{A}{n}, \GR{D}{2n+1}, \GR{E}{6}$. In this case, the projection
$\psim: \Nsp \to \g_1$ is dominant.  

In Proposition~\ref{prop:finite-proj} and Lemma~\ref{lm:secant-sigma}, we use the obvious fact that
$\sigma(\omin)=\omin$ for any $\sigma\in\mathsf{Inv}(\g)$. However, for $\sigma=\vth_{\sf max}$, the 
similar property holds for any $\co\subset\N$. Therefore, an analogue of Lemma~\ref{lm:secant-sigma} 
holds for any nilpotent orbit whenever $\sigma=\vth_{\sf max}$.


\begin{thebibliography}{P95}

\bibitem{leva} {\rusc L.V.~Antonyan}. 
{\rus O klassifikacii odnorodnykh {e1}lementov ${\BZ}_2$-graduirovannykh poluprostykh algebr Li},
{\rusi  Vestnik Mosk. Un-ta, Ser. Matem. Meh.}, {\rus N0}\,2 (1982), 29--34 (Russian). 
English translation: {\sc L.V.~Antonyan}. On classification of
homogeneous elements of ${\BZ}_2$-graded semisimple Lie algebras,
{\it  Moscow Univ. Math. Bulletin}, {\bf 37}\,(1982), {\rus N0}\,2, 36--43.

\bibitem{bk95} {\sc R.K.\,Brylinski} and {\sc B.~Kostant}.
Lagrangian models of minimal representations of $E_6$, $E_7$ and $E_8$. 
{\it Functional analysis on the eve of the 21st century}, Vol.~1, 13--63, 
Progr. Math., {\bf 131}, Birkh\"auser Boston, 1995.

\bibitem{bul}  {\sc M.~Bulois}. 
Sheets of symmetric Lie algebras and Slodowy slices,  
{\it J. Lie Theory}, {\bf 21}\,(2011), 1--54. 

\bibitem{chap}
{\sc P.-E.~Chaput}. Severi varieties, {\it Math. Z.}, {\bf 240}\,(2002), 451--459.

\bibitem{CM} {\sc D.H.~Collingwood} and {\sc W.~McGovern}.
{\it ``Nilpotent orbits in semisimple Lie algebras''},  New York: Van Nostrand Reinhold, 1993.

\bibitem{CP83}
{\sc C.~De Concini} and {\sc C.~Procesi}. Complete symmetric varieties, {\it Invariant theory} (Montecatini, 1982), 1--44, Lecture Notes in Math., {\bf 996}, Springer, Berlin, 1983.

\bibitem{ag72} 
{\rusc A.G.~{E1}lashvili}. {\rus Kanonicheski{\u\i} vid i statsionarnye podalgebry
tochek obwego polozheniya dlya prostykh line{\u\i}nykh grupp Li},
{\rusi Funkts.\ analiz i\  prilozh.} {\rus t}.{\bf 6}, {\rus N0}\,1 (1972), 51--62
(Russian). English translation: 
{\sc A.G.~Elashvili}. 
Canonical form and stationary subalgebras of points of
general position for simple linear Lie groups, {\it Funct. Anal. Appl.}
{\bf 6}\,(1972), 44--53.

\bibitem{he79}
{\sc W.~Hesselink}.  The normality of closures of orbits in a Lie algebra. 
{\it Comment. Math. Helv.} {\bf 54}\,(1979), no.\,1, 105--110.

\bibitem{kaji99} 
{\sc H.~Kaji}. Homogeneous projective varieties with degenerate secants,
{\it Trans. Amer. Math. Soc.}, {\bf 351}\,(1999), 533--545.

\bibitem{koy99} 
{\sc H.~Kaji, M.~Ohno} and {\sc O.~Yasukura}. 
Adjoint varieties and their secant varieties, 
{\it Indag. Math.} (N.S.) {\bf 10}\,(1999), no.~1, 45--57.

\bibitem{ky00} 
{\sc H.~Kaji} and {\sc O.~Yasukura}. 
Secant varieties of adjoint varieties: orbit decomposition, 
{\it J. Algebra},  {\bf 227}\,(2000), no.~1, 26--44.

\bibitem{kr71} {\sc B.~Kostant} and {\sc S.~Rallis}. 
Orbits and representations associated with symmetric spaces, 
{\it Amer. J. Math.}, {\bf 93}\,(1971), 753--809.

\bibitem{smt}
{\sc V.~Lakshmibai} and {\sc K.~Raghavan}.
``Standard monomial theory. Invariant theoretic approach'',  
Encyclopaedia Math. Sci., {\bf 137}. Springer-Verlag, Berlin, 2008. xiv+265 pp. 

\bibitem{land}
{\sc J.M.~Landsberg}.
``Tensors: geometry and applications''. Graduate Studies in Mathematics, {\bf 128}. AMS, Providence, RI, 2012. 

\bibitem{law95} {\sc R.~Lawther}. 
Jordan block sizes of unipotent elements in exceptional algebraic groups, 
{\it Comm. Alg.},  {\bf 23}\,(1995), 4125--4156.

\bibitem{L}
{\sc W.~Lichtenstein}. 
A system of quadrics describing the orbit of the highest weight vector, 
{\it Proc. Amer. Math. Soc.}, {\bf  84}\,(1982), no. 4, 605--608.

\bibitem{aif99} {\sc D.~Panyushev}.
On spherical nilpotent orbits and beyond,
{\it Ann. Inst. Fourier} {\bf 49}\,(1999), 1453--1476.

\bibitem{pavi} {\sc D.~Panyushev} and {\sc E.B.~Vinberg}.
An infinitesimal characterization of the complexity of homogeneous
spaces, {\it Transformation Groups}, {\bf 4}, no.\,1~(1999), 97--101. 

\bibitem{R67} {\sc R.W.~Richardson}.
Conjugacy classes in Lie algebras and algebraic groups, 
{\it Ann. of Math. (2)} {\bf 86}\,(1967), 1--15.

\bibitem{spr87} {\sc T.A.~Springer}. 
The classification of involutions of simple algebraic groups, 
{\it J. Fac. Sci. Univ. Tokyo} Sect. IA Math. {\bf 34}\,(1987), 655--670.

\bibitem{vi76}
{\rusc {E1}.B.~Vinberg}. {\rus Gruppa Ve\u\i lya 
graduirovanno{\u\i} algebry Li}, {\rusi Izv. AN SSSR. Ser. Matem.} 
{\bf 40}, {\rus N0}\,3 (1976), 488--526 (Russian). English translation: 
{\sc E.B.\,Vinberg}. The Weyl group of a graded Lie algebra,
{\it Math. USSR-Izv.} {\bf 10}\,(1976), 463--495.

\bibitem{t41}
{\rusc {E1}.B.~Vinberg, V.V.~Gorbatseviq, A.L.~Oniwik}.
``{\rusi Gruppy i algebry Li - 3}'', {\rus  
Sovrem. probl. matematiki. Fundam. napravl., t.\,{\bf 41}.
Moskva: VINITI} 1990 (Russian).
English translation: {\sc V.V.\,Gorbatsevich, A.L.\,Onishchik} and {\sc E.B.\,Vinberg}.
``{\it Lie Groups and Lie Algebras}" III
(Encyclopaedia Math. Sci., vol.~{\bf 41}) Berlin: Springer 1994.

\bibitem{vp72} {\rusc {E1}.B.~Vinberg, V.L.~Popov}.
{\rus Ob odnom klasse affinnyh kvaziodnorodnyh mnogoobrazi{\u\i}},
{\rusi Izv. AN SSSR. Ser. Matem.}, {\rus t.}{\bf 36}\,(1972),
749--764 (Russian). English translation:
{\sc E.B.~Vinberg} and {\sc V.L.~Popov}. On a class of quasihomogeneous
affine varieties, {\it Math. USSR-Izv.}, {\bf 6}\,(1972), 743--758.

\bibitem{vp89}
{\rusc {E1}.B.~Vinberg, V.L.~Popov}. {\rusi ``Teoriya Invariantov"}, {\rus   
Sovrem. probl. matematiki. Fundam. napravl., t.\,{\bf 55},
str.}\,137--309. {\rus Moskva: VINITI} 1989 (Russian).
English translation: 
{\sc V.L.~Popov} and {\sc E.B.~Vinberg}. ``Invariant theory", In: {\it  Algebraic Geometry IV}
(Encyclopaedia Math. Sci., vol.~{\bf 55}, pp.123--284) 
Berlin Heidelberg New York: Springer 1994.

\end{thebibliography}
\end{document}